\newtheorem{theorem}{Theorem}[section]
\newtheorem{lemma}[theorem]{Lemma}
\newtheorem{proposition}[theorem]{Proposition}
\newtheorem{claim}[theorem]{Claim}
\newtheorem{convention}[theorem]{Convention}
\newtheorem{remark}[theorem]{Remark}
\theoremstyle{definition} 
\newtheorem{definition}[theorem]{Definition}
\newcounter{theoremalph}
\newtheorem{introtheorem}[theoremalph]{Theorem}
\newtheorem{introcorollary}[theoremalph]{Corollary}
\newcounter{mynote}
\title{Characterizing divergence and thickness in right-angled Coxeter groups}
\author{Ivan Levcovitz}
\date{}
\thanks{The author was supported in part by a Technion fellowship.}
\begin{document}
\maketitle
\begin{abstract}
	We completely classify the possible divergence functions for right-angled Coxeter groups (RACGs). In particular, we show that the divergence of any such group is either polynomial, exponential or infinite.
	We prove that a RACG is strongly thick of order $k$ if and only if its divergence function is a polynomial of degree $k+1$. Moreover, we show that the exact divergence function of a RACG can easily be computed from its defining graph by an invariant we call the hypergraph index.
\end{abstract}
\section{Introduction}
Given a finite simplicial graph $\Gamma$ with vertex set $V(\Gamma)$ and edge set $E(\Gamma)$, 
the corresponding right-angled Coxeter group (RACG for short) is given by the presentation:
\[ \langle s \in V(\Gamma) ~|~ s^2 =1 ~ \text{ for all } s \in V(\Gamma) \text{ and } st = ts \text{ for all } (s, t) \in E(\Gamma) \rangle \] 

In this article, we provide an explicit method to compute the divergence and order of (strong) thickness for all RACGs, and we prove that these quasi-isometry invariants are in fact equivalent in this setting.
Consequently, 
this result establishes an exact connection between 
the largest rate that a pair of geodesic rays can diverge in (the Cayley graph of) a RACG and the coarse complexity of an optimal decomposition of this group into subsets which do not exhibit non-positive curvature (i.e. whose asymptotic cones do not contain cutpoints).

Besides divergence and thickness, there are few computable quasi-isometry invariants that are applicable to non-relatively hyperbolic RACGs.
In fact, non-relatively RACGs with super-quadratic divergence could previously only be distinguished up to quasi-isometry in some exceptional cases.
A consequence of our main theorem is that we can distinguish many more RACGs up to quasi-isometry, as we are able to explicitly determine their divergence functions. As a concrete example, $W_{\Gamma_2}$ and $W_{\Gamma_3}$ are not quasi-isometric (where $\Gamma_2$ and $\Gamma_3$ are as in Figure~\ref{fig:example} below), as by our main theorem $W_{\Gamma_2}$ has divergence a cubic polynomial and $W_{\Gamma_3}$ has divergence a quartic polynomial.

Given a bi-infinite geodesic $\gamma: \mathbb{R} \to X$ in a metric space $X$, its geodesic divergence is the function $\text{Div}^{\gamma}(r)$ whose value is the infimum over the length of $\alpha$, where $\alpha$ is a path from $\gamma(-r)$ to $\gamma(r)$ which does not intersect the open ball based at $\gamma(0)$ of radius $r$. If no such path exists, then we say that $\text{Div}^{\gamma}(r)$ is infinite.
There is a corresponding notion of the divergence function of a metric space which, for $r >0$, roughly takes value the supremum over the lengths of all minimal length paths, which avoid an open ball of radius proportional to $r$ and which connect two points that are distance proportional to $r$ apart.
The divergence function of a finitely generated group is defined to be the divergence function of one of its Cayley graphs, and
it is a quasi-isometry invariant of finitely generated groups up to a usual equivalence of functions used in geometric group theory.
The geodesic divergence of a bi-infinite geodesic in the Cayley graph of a group gives a lower bound on the group's divergence.

The second quasi-isometry invariant treated in this article is strong thickness (which we often simply refer to as ``thickness''). 
A group is thick of order $0$ if and only if all of its asymptotic cones do not contain cutpoints. 
Roughly, a group is thick of order $k$, if it is not thick of order $k-1$, and its Cayley graph coarsely decomposes into thick pieces of order strictly less than $k$. Moreover, given any two such pieces $P$ and $P'$ in this decomposition, there exists a sequence of pieces $P = P_1, \dots, P_m = P'$ in the decomposition such that $P_i$ has infinite-diameter coarse intersection with $P_{i+1}$ for $1 \le i < m$. 
\vspace{.5cm}
\begin{figure}[htbp]
	\begin{overpic}[scale=.4]{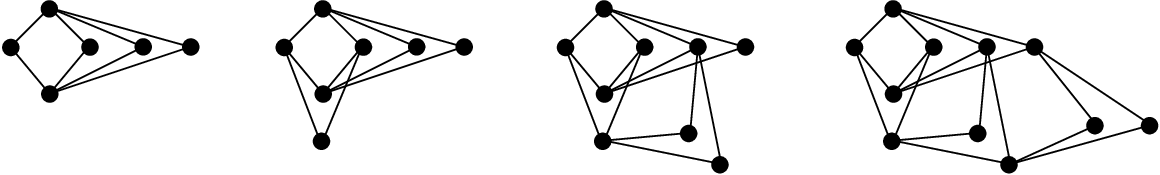}
		\put(5,18){\Tiny $\Gamma_0$}
		\put(28,18){\Tiny $\Gamma_1$}
		\put(53,18){\Tiny $\Gamma_2$}
		\put(78,18){\Tiny $\Gamma_3$}
	\end{overpic}
	\caption{For each $0 \le i \le 3$, the graph $\Gamma_i$ has hypergraph index $i$.}
	\label{fig:example}
\end{figure}

Finally, we will need a third notion: the hypergraph index.
Given a simplicial graph, one can explicitly compute its hypergraph index, which takes value either a non-negative integer or~$\infty$.
Our main theorem stated below, characterizes divergence and thickness in RACGs in terms of the hypergraph~index of their defining graphs. This gives an easy and explicit method of computing thickness and divergence of any RACG.
\begin{introtheorem}\label{intro_thm:char}
	Let $W_\Gamma$ be a RACG and $k \ge 0$ an integer. Then the following are equivalent:
	\begin{enumerate}
		\item \label{main_thm3} The hypergraph index of $\Gamma$ is $k$.
		\item \label{main_thm1} The divergence of $W_\Gamma$ is $r^{k+1}$, and the Cayley graph of $W_\Gamma$ contains a periodic geodesic with geodesic divergence $r^{k+1}$.
		\item \label{main_thm2}  The group $W_\Gamma$ is strongly thick of order $k$.
	\end{enumerate}
\end{introtheorem}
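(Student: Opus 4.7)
The plan is to prove the triple equivalence by establishing a cycle of implications of the form $(1) \Rightarrow (3)$, $(3) \Rightarrow$ upper bound on $\mathrm{Div}(W_\Gamma)$, and $(1) \Rightarrow$ lower bound on $\mathrm{Div}(W_\Gamma)$ realized by a periodic geodesic, then closing the loop by uniqueness of the hypergraph index. Since the hypergraph index is a deterministic function of $\Gamma$ that takes a single value, once we know each of the three conditions forces the numerical value $k$, equivalence follows.

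For $(1) \Rightarrow (3)$, I would argue by induction on the hypergraph index $k$. The base case $k=0$ should correspond to the classical characterization of thick-of-order-$0$ RACGs, namely those whose asymptotic cones have no cutpoints; this is typically detected by $\Gamma$ containing a suitable join/CFS structure, which is exactly what hypergraph index $0$ should encode. For the inductive step, the construction of the hypergraph index should naturally partition $\Gamma$ into subgraphs $\Lambda_1,\dots,\Lambda_n$ of hypergraph index at most $k-1$, each generating a parabolic subgroup $W_{\Lambda_i}$; by induction $W_{\Lambda_i}$ is thick of order at most $k-1$. The union of cosets $\{gW_{\Lambda_i}\}$ then gives a candidate thick decomposition of $W_\Gamma$, and one must verify (i) the coarse covering property, and (ii) that the ``network'' of pieces is connected via infinite-diameter coarse intersections. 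Condition (ii) will follow from how consecutive hyperedges in the hypergraph intersect.

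For the upper bound $(3) \Rightarrow \mathrm{Div}(W_\Gamma) \preceq r^{k+1}$, I would cite the general result of Behrstock--Drutu that a group strongly thick of order $k$ has divergence bounded above by a polynomial of degree $k+1$. Combined with the previous paragraph this gives the upper half of $(1) \Rightarrow (2)$.

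The main obstacle is the matching lower bound, i.e.\ producing, from a graph $\Gamma$ of hypergraph index exactly $k$, a bi-infinite periodic geodesic $\gamma$ in the Cayley graph with $\mathrm{Div}^\gamma(r) \succeq r^{k+1}$. The expected strategy is to read off from the hypergraph a nested chain of length $k$ of ``thickening'' subgraphs, each contributing one extra polynomial degree to the cost of detouring. Concretely, one builds $\gamma$ as the axis of a well-chosen product of generators drawn from the deepest level of the hypergraph; any detour around $\gamma(0)$ at radius $r$ is forced, by the combinatorics of the hypergraph at each level, to traverse a region whose minimal crossing length grows like $r$, and induction over the $k$ levels multiplies these costs to yield $r^{k+1}$. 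The careful point is to show that in a RACG geodesics cannot ``shortcut'' between levels, which typically requires disk-diagram or cube-complex arguments in the Davis complex (using that $W_\Gamma$ acts on a CAT(0) cube complex) to control how a detour path must cross the hyperplanes dual to each level. This is where the heavy combinatorial lifting occurs, and where the explicit structure of the hypergraph index must be used most delicately.

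Finally, to close the cycle, suppose the hypergraph index of $\Gamma$ equals some $k'$. By $(1) \Rightarrow (2)$ applied to $k'$ we get $\mathrm{Div}(W_\Gamma) \simeq r^{k'+1}$, and by $(1) \Rightarrow (3)$ we get $W_\Gamma$ strongly thick of order $k'$. Since divergence and order of thickness are each well-defined invariants, any RACG satisfying $(2)$ or $(3)$ with parameter $k$ must have $k'=k$, yielding $(2) \Rightarrow (1)$ and $(3) \Rightarrow (1)$. This completes the equivalence.
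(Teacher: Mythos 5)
Your overall architecture is the same as the paper's: the two upper bounds come from citable results (hypergraph index $k$ implies strong thickness of order at most $k$, which is Theorem B of the paper's earlier work \cite{Levcovitz-thick}, and Behrstock--Dru\c{t}u's bound $\mathrm{Div} \preceq r^{k+1}$ for groups thick of order $k$), the new content is a matching lower bound realized by a periodic geodesic, and the three conditions are then tied together because each one pins down the numerical value of a well-defined invariant. Two points, however, need attention.

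First, and most seriously, the lower bound is not a ``careful point'' to be checked --- it is essentially the entire paper. Your description (``induction over the $k$ levels multiplies these costs to yield $r^{k+1}$'') is the correct heuristic, but it is exactly the heuristic that the paper says resisted generalization beyond $k=0,1$: a detour path can in principle interact with the hyperplane/join structure at different levels in ways that break a naive multiplicative induction. The actual argument requires introducing $L$--fences (configurations of dual curves in a disk diagram whose intersection pattern mirrors a subgraph of hypergraph index $L$), proving that they separate disk diagrams (Proposition \ref{prop:intersects_a_spoke}) and correspond to hypergraph index (Proposition \ref{prop:hyp_index_of_fences}), and then running two mutually inductive technical propositions (Propositions \ref{prop:main_full} and \ref{prop:main_pieces}) whose hypotheses are engineered so that the induction closes, all while performing disk-diagram surgeries that introduce bigons/nongons which cannot be removed without destroying the fence structures already found. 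None of this is present in, or recoverable from, your sketch, so as written the proposal does not prove the key implication.

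Second, your closing step quietly assumes the hypergraph index of $\Gamma$ is a finite integer $k'$. To get $(2)\Rightarrow(1)$ and $(3)\Rightarrow(1)$ you must also rule out hypergraph index $\infty$: in that case $W_\Gamma$ is relatively hyperbolic, hence not strongly thick of any order, and its divergence is exponential or infinite rather than polynomial. This is a short argument (the paper handles it in two sentences via \cite{Sisto} and \cite{Behrstock-Drutu-Mosher}), but it is logically necessary and missing from your proposal. Your plan to reprove $(1)\Rightarrow(3)$ by induction on the hypergraph decomposition is reasonable in outline but is also nontrivial (quasi-convexity of the pieces and connectivity of the network must be verified); simply citing the known theorem, as the paper does, is the cleaner route.
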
  

As we later discuss, there were already some known bounds between divergence, thickness and the hypergraph index \cite{Behrstock-Drutu, Levcovitz-thick}. 
Moreover, the above theorem was known in the special cases where $k=0$ and $k=1$ \cite{Behrstock-Hagen-Sisto, Dani-Thomas, Levcovitz-div} and was conjectured in \cite{Levcovitz-thick}.
However, the proof of those two known cases resisted generalization, and new, significantly more refined methods had to be developed in order to prove our result in its full generality.
Additionally, given an arbitrary non-relatively hyperbolic RACG which is not thick of order $0$ or $1$, its exact divergence function and order thickness was previously unknown except in very specialized cases such as those treated in \cite{Dani-Thomas} and \cite{Levcovitz-div} which involve RACGs whose associated CAT(0) cube complexes contain hyperplanes with very well-behaved separation properties.

As a corollary, we obtain a complete classification of divergence functions in RACGs:
\begin{introcorollary} \label{intro_cor}
	The divergence of a RACG $W_\Gamma$ is either polynomial, exponential or infinite.
\end{introcorollary}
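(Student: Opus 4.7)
The plan is to derive the corollary from Theorem~\ref{intro_thm:char} by splitting on whether the hypergraph index of $\Gamma$ is finite. If the hypergraph index is a finite integer $k \geq 0$, then Theorem~\ref{intro_thm:char} immediately gives that the divergence of $W_\Gamma$ is $r^{k+1}$, a polynomial. So these RACGs are handled directly, and all the work is in the case when the hypergraph index equals $\infty$.

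In that remaining case, I would need to show the divergence is either exponential or infinite. First I would invoke the standard upper bound that every finitely generated group has at most exponential divergence (via the usual greedy ball-exploration argument), which reduces the problem to showing that whenever the hypergraph index is $\infty$ and the divergence function is finite-valued, it is already bounded below by an exponential. For this, I would extract a structural dichotomy from the failure of the hypergraph index construction to terminate: either $\Gamma$ admits a clique separator that induces a splitting of $W_\Gamma$ as an amalgamated product over the finite subgroup generated by the separating clique, in which case divergence is infinite because every path between the two sides must cross a bounded region; or, absent such a splitting, I would appeal to the known dichotomy between (strong) thickness and relative hyperbolicity for RACGs to locate a hyperbolic-like geodesic in the Cayley graph with exponentially diverging ends, giving the required lower bound.

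The main obstacle is the structural analysis in the infinite-hypergraph-index case. Since the hypergraph index is defined through an iterative procedure specific to this paper, its failure to terminate must be translated into graph-theoretic and group-theoretic consequences sharp enough to rule out any intermediate divergence behavior (for instance, divergence that is strictly super-polynomial yet sub-exponential) and to cleanly separate the exponential from the infinite cases. A workable argument would likely reuse the detailed structural lemmas about the hypergraph index developed while proving Theorem~\ref{intro_thm:char}, together with prior results on relative hyperbolicity and thickness for RACGs from the references already cited in the introduction.
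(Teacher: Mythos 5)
Your overall route is the paper's: split on whether the hypergraph index is finite, dispatch the finite case with Theorem~\ref{intro_thm:char}, and reduce the infinite case to relative hyperbolicity plus the classification of ends. Two comments. First, the infinite case requires no fresh structural analysis of the iterated hypergraph construction: the paper already records in the preliminaries (from \cite{Levcovitz-thick}) that the hypergraph index is $\infty$ if and only if $W_\Gamma$ is relatively hyperbolic, and then simply quotes Sisto's Theorem~1.3 to get exponential divergence when $W_\Gamma$ is one-ended, and infinite divergence when $W_\Gamma$ is finite, two-ended or infinite-ended. Your clique-separator discussion is just the standard description of when a RACG fails to be one-ended, so it is consistent with this, but the ``main obstacle'' you anticipate has already been resolved by the cited results.

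Second, the one genuinely flawed step is the claim that every finitely generated group has at most exponential divergence via a ``greedy ball-exploration argument.'' This is not a theorem: one-endedness only guarantees that the complement of a ball has an unbounded component, and a path joining $x$ to $y$ outside $B_b(\delta n)$ may be forced to travel arbitrarily far from $b$ before returning, so no bound in terms of $n$ alone follows. You need this upper bound to conclude that the divergence in the relatively hyperbolic one-ended case is \emph{exactly} exponential rather than merely at least exponential. The repair is easy and available in this setting: $W_\Gamma$ acts geometrically on the CAT(0) Davis complex, where the exponential upper bound on divergence is standard; alternatively, the version of Sisto's theorem cited by the paper already delivers the exponential function on the nose. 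With that substitution your argument closes up and coincides with the paper's proof.
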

\begin{proof}
	If the hypergraph index of $\Gamma$ is $\infty$, then $W_\Gamma$ is relatively hyperbolic  (see \cite{Levcovitz-thick}) and  has divergence an exponential function if one-ended \cite{Sisto}[Theorem~1.3] and has infinite divergence if it is finite, infinite-ended or two-ended. Otherwise, the hypergraph index of $\Gamma$ is an integer and $W_\Gamma$ has divergence a polynomial function by Theorem~\ref{intro_thm:char}.
\end{proof}

These large gaps exhibited in the divergence function spectrum of RACGs do not exist in arbitrary finitely generated groups: there are groups with ``intermediate'' divergence functions. 
For instance, Olshanskii--Osin--Sapir show there are lacunary hyperbolic groups exhibiting divergence functions which are strictly between linear and quadratic  \cite{Olshanskii-Osin-Sapir}. Additional groups with exotic divergence functions were found by Gruber--Sisto \cite{Gruber-Sisto}. More recently, Brady--Tran amazingly construct finitely-presented groups with divergence functions $r^\alpha$ for a dense set of $\alpha \in [2, \infty]$~\cite{Brady-Tran}.

Theorem \ref{intro_thm:char} can also be utilized in the study of random RACGs, i.e., RACGs defined by a random graph in the Erd\H{o}s--R\'{e}nyi model.
Behrstock--Hagen--Sisto compute an explicit threshold function for when a random RACG is thick or relatively hyperbolic \cite{Behrstock-Hagen-Sisto}.
Building on this work, Behrstock--Falgas-Ravry--Hagen--Susse give threshold functions for the transition in random RACGs between thickness of order $0$, $1$ and larger than $1$ \cite{Behrstock-Falgas-Ravry-Hagen-Susse}.
These authors demonstrate an interesting interplay between random graphs and the coarse geometry of RACGs.
Threshold functions for the transition between orders of thickness larger than $1$ are unknown. This question is now more tractable, as Theorem~\ref{intro_thm:char} reduces it to an analysis of random graphs: determining threshold functions for the transition from one hypergraph index to the next. 

We now discuss some background on divergence, thickness and the hypergraph index. 
Gromov expected that one-ended groups which act geometrically on a CAT(0) space should exhibit either linear or exponential divergence \cite{Gromov}. However, it turns out that many important classes of groups (including CAT(0) ones) do not fall into this dichotomy. For instance, Gersten showed that the divergence of a $3$--manifold group is linear, quadratic or exponential \cite{Gersten}. Additionally, Behrstock--Charney prove that the divergence of a right-angled Artin group is, similarly, linear, quadratic or infinite (when not one-ended) \cite{Behrstock-Charney}. Interestingly, most mapping class groups also exhibit quadratic divergence \cite{Behrstock-mcg, Duchin-Rafi}. 
There are also CAT(0) groups with divergence function a polynomial of any degree, and such groups were first constructed by Macura \cite{Macura} and independently also by Behrstock-Dru\c{t}u \cite{Behrstock-Drutu}.

For each non-negative integer, Dani--Thomas give an example of a RACG whose divergence is a polynomial of this degree \cite{Dani-Thomas}.
Furthermore, these authors give a graph-theoretic characterization of $2$--dimensional RACGs  (i.e., whose defining graph does not contain $3$--cycles) with linear and quadratic divergence. These characterizations were later generalized to RACGs of arbitrary dimension, with the linear case being done by Behrstock--Hagen--Sisto \cite{Behrstock-Hagen-Sisto} and the quadratic case done by the author \cite{Levcovitz-div}.

Thick spaces were first defined by Behrstock--Dru\c{t}u--Mosher in \cite{Behrstock-Drutu-Mosher} where it is shown that the order of thickness is a quasi-isometry invariant and that thick groups are non-relatively hyperbolic. Behrstock--Dru\c{t}u later define a slightly stronger, more quantified, version of thickness, known as strong thickness which appears to be becoming the standard definition. 
Strong thickness is still a quasi-isometry invariant, and all groups known to be thick, are strongly thick of the same order.
Furthermore, these authors show that a group which is strongly thick of order $k$ has divergence function bound above by a polynomial of degree $k+1$ \cite{Behrstock-Drutu}. 
Many well-studied non-relatively hyperbolic groups are strongly thick, and there is often a dichotomy where a group, in a given class of groups, is either strongly thick or hyperbolic relative to strongly thick peripheral subgroups (see for instance \cite{Behrstock-Hagen-Sisto} and \cite{Hagen-free-by-z}).

The hypergraph index was introduced by the author in \cite{Levcovitz-thick}.
The hypergraph index of a RACG is defined to be the hypergraph index of its defining graph, and it was previously known to give some measure of the RACG's coarse complexity.
For instance, the hypergraph index of a RACG is $\infty$ if and only if the RACG is relatively hyperbolic.
Moreover, if a given RACG is quasi-isometric to a right-angled Artin group then its hypergraph index is either $0$, $1$ or $\infty$. 
The hypergraph index was previously only known to be quasi-isometry invariant within the class of $2$--dimensional RACGs, and the proof of this used the structure of quasi-flats.
Finally, the hypergraph index is known to give an upper bound on thickness: a RACG of hypergraph index $k \neq \infty$ is thick of order at most $k$.

Given these known bounds between divergence, thickness and the hypergraph index, 
in order to prove Theorem \ref{intro_thm:char}, a lower bound on the divergence function of a RACG in terms of the hypergraph index must be established. 
This is the content of the following theorem:

\begin{introtheorem} \label{intro_thm:div}
	Let $\Gamma$ be a simplicial graph with hypergraph index $k \neq \infty$. Then the Cayley graph of the RACG $W_\Gamma$ contains a periodic geodesic with geodesic divergence a polynomial of degree~$k+1$.
\end{introtheorem}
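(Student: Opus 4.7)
The upper bound $\mathrm{Div}_{W_\Gamma}(r) \preceq r^{k+1}$ is already in hand: by \cite{Levcovitz-thick} a hypergraph index of $k$ forces strong thickness of order at most $k$, and by \cite{Behrstock-Drutu} this in turn bounds divergence by $r^{k+1}$. The content of Theorem~\ref{intro_thm:div} is therefore the existence of a specific bi-infinite periodic geodesic $\gamma$ whose geodesic divergence grows like $r^{k+1}$ \emph{from below}. My plan is to produce this geodesic by strong induction on $k$, mirroring the recursive construction of the hypergraph index itself.

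For the base case $k=0$, the hypergraph index being zero forces $\Gamma$ to admit a nontrivial join decomposition in the essential sense used in \cite{Levcovitz-thick}, so $W_\Gamma$ contains a direct product $W_A \times W_B$ with $A, B$ nonempty. The axis of any infinite-order element $g \in W_A$ (or of a product like $st$ with $s \in A$, $t \in B$) is then a bi-infinite periodic geodesic $\gamma$, and the crude bound $\mathrm{Div}^\gamma(r) \geq 2r$ coming from the fact that the Cayley graph is a geodesic space supplies the degree-one lower bound.

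For the inductive step $k \geq 1$, I would unpack the recursive definition of the hypergraph index: a stage-$k$ ``piece'' of $\Gamma$ arises by gluing lower-index pieces via prescribed combinatorial data involving common neighbourhoods (or stars) of the generators activated at stage $k$. Applying the inductive hypothesis to a suitable lower-index sub-piece produces an element $w_{k-1}$ whose axis has geodesic divergence of degree exactly $k$. The candidate periodic element should be of the form $w_k := w_{k-1}^N u$, where $u$ is a short word built from the generators newly available at stage $k$ and $N$ is taken large enough to avoid unwanted cancellations and commutation-based shortenings. Verifying that this word labels a bi-infinite geodesic requires a careful combinatorial analysis inside the Davis CAT(0) cube complex, using the wall/hyperplane structure of $W_\Gamma$ to rule out reductions.

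The principal obstacle is the lower bound on geodesic divergence along the axis $\gamma$ of $w_k$. Heuristically, any path from $\gamma(-r)$ to $\gamma(r)$ avoiding the $r$-ball at $\gamma(0)$ should be forced, by the separation properties of the stars of the stage-$k$ generators, into $\Theta(r)$ nested detours; each detour must traverse a copy of the lower-index periodic geodesic at distance $\Theta(r)$ from its separating structure and, by the inductive hypothesis, costs $\Theta(r^k)$, summing to $\Theta(r^{k+1})$. Turning this heuristic into a rigorous argument is where I expect the real difficulty to lie: one must identify a family of separating sets whose geometry is rigid enough to enforce \emph{exactly} this multiplicative accumulation of cost, neither weaker (which would fail to give the bound) nor stronger (which would contradict the known upper bound). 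As the introduction already warns, this is precisely the point where the earlier methods for $k=0, 1$ failed to generalize, so calibrating the separation argument---most likely by introducing finer invariants of the hypergraph-index construction than have been used before---is what the proof will stand or fall on.
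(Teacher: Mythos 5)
There is a genuine gap, and you have in fact flagged it yourself: everything after ``the principal obstacle is the lower bound'' is a heuristic, not a proof, and that heuristic is the entire mathematical content of the theorem. The upper bound and the base case are indeed easy (the paper dispatches the join case via \cite{Behrstock-Falgas-Ravry-Hagen-Susse}), but the lower bound occupies Sections 3--5 of the paper and rests on machinery your outline does not supply: $L$--fences in disk diagrams (collections of spokes whose intersection pattern mirrors the hypergraph-index recursion), structured sequences of dual curves, and disk-diagram surgery, all feeding into two technical propositions proven by a simultaneous induction on $L$. Your ``$\Theta(r)$ detours each costing $\Theta(r^k)$'' matches the shape of the paper's final summation, but the $r^k$ cost of each detour is extracted from Proposition \ref{prop:main_full}, not asserted.

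Beyond the missing machinery, your specific inductive setup has a structural flaw. You propose to obtain $w_{k-1}$ from a lower-index sub-piece and bootstrap its divergence, but geodesic divergence computed inside a special subgroup $W_\Delta$ does not transfer to a lower bound in the ambient group $W_\Gamma$: an avoidant path in $W_\Gamma$ is free to leave $W_\Delta$, so the inductive hypothesis applied to the sub-piece says nothing about detours taken in the big group. This is precisely why the paper does not induct on subgroup divergence or on the periodic word at all. Instead it fixes a single $\Gamma$--complete word $w$ (one visiting every vertex of $\Gamma$ with consecutive letters non-adjacent) independent of $k$, takes the periodic geodesic labeled by powers of $w$, and runs the induction entirely inside disk diagrams over $W_\Gamma$: Lemma \ref{lem:complete_word} shows no $(k-1)$--fence can connect consecutive hyperplane carriers along this geodesic, which supplies hypothesis (A3) of Proposition \ref{prop:main_full}, and the hypotheses of that proposition are deliberately weakened ($M$--adequate sets, $L$--fence separation stable under surgery, tracking paths) so that the induction closes. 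Your plan would need an analogous device to keep the lower-index estimate valid against detours in the ambient group, and without it the inductive step does not go through.
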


The proof of Theorem \ref{intro_thm:div} involves a careful analysis of disk diagrams. 
We first define $L$--fences in a disk diagram over a RACG in Section~\ref{sec:fences}. 
 These inductively defined objects consist of a set of dual curves whose intersection pattern naturally corresponds to a subgraph of hypergraph index $L$ in the RACG's defining graph.
Additionally, dual curves which ``cross'' an $L$--fence are forced to intersect it in a way that is compatible with the associated hypergraph index structure.
In Section~\ref{sec:structured}, we define sequences of ``structured'' dual curves in a disk diagram. These sequences have desirable properties, and we show how to find large enough such sequences.
In Section~\ref{sec:surgery} we define disk diagram surgeries which allow us to insert a disk diagram, which contains a well-behaved path, into another disk diagram.
After establishing these necessary concepts and proving some essential properties about them, 
in Section~\ref{sec:div_bounds} we simultaneously prove two technical propositions which are at the heart of the proof of Theorem~\ref{intro_thm:div}. 
These propositions give lower bounds on the lengths of certain paths, and their hypotheses are designed to be weak enough to allow for the inductive argument to work.
The arguments in this section involve a careful analysis of $L$--fences in disk diagrams and the paths which they connect.
We are also required to perform  a series of disk diagram surgeries.
A technical challenge to surgeries is that pathologies such as bigons and nongons are introduced into the resulting diagram, and they cannot be easily removed without possibly destroying $L$--fence structures already found.
We often then need to ``let bigons be bigons'' and to work around these pathologies. 
Finally, in Section~\ref{sec:main_thms}, we utilize the technical work from the previous section to prove the results from the introduction.

\subsection*{Acknowledgements} 
 	I am extremely grateful to the anonymous referee whose comments greatly improved the exposition and preciseness of the paper.
	I am also thankful to Jason Behrstock for helpful comments.

\section{Preliminaries} \label{sec:preliminaries}
\emph{We establish some of the definitions and notation used throughout the article and provide references for a more extensive background.}

\vspace{.4cm}
Let $X$ be a metric space. Given a point $x \in X$ and a constant $R \ge 0$, we always denote the open $R$--ball about $x$ by $B_x(R)$. Given a subspace $Y \subset X$, we denote the $R$--neighborhood of $Y$ by~$N_R(Y)$.
\subsection{Divergence}
We review the definitions of divergence of a metric space and of a geodesic. 
We refer the reader to \cite{Drutu-Mozes-Sapir} for further background and proofs that various notions of divergence are equivalent under mild hypotheses (such as the metric space being the Cayley graph of a finitely generated group).

Let $(X, d)$ be a metric space, and let $0 < \delta \le 1$ and $\lambda \ge 0$ be constants. 
Given points $x, y, b \in X$ such that $\min \{ d(b, x), d(b, y) \} = n > 0$, we define $\text{div}_{\delta, \lambda}(x, y, b)$ to be the infimum over the lengths of paths from $x$ to $y$ which do not intersect the open ball $B_b(\delta n - \lambda)$. If there is no such path, we set $\text{div}_{\delta, \lambda}(x, y, b) = \infty$.
The \textit{divergence of $X$} is the function $\text{Div}^X_{\delta, \lambda}(r)$ which, for each $r \ge 0$, takes value the supremum of $\text{div}_{\delta, \lambda}(x, y, b)$ over all $x, y, b \in X$ such that $d(x, y) \le r$.

Given a pair of non-decreasing functions $f, g: \mathbb{R}_+ \to \mathbb{R}_+$, we write $f \preceq g$ if for some constant $C \ge 1$ we have that
\[f(r) \le C g(Cr + C) + Cr + C\]
for all $r \in \mathbb{R}_+$. We write $f \asymp g$ if $f \preceq g$ and $g \preceq f$. 

Up to the equivalence relation $\asymp$ and for $\delta \le \frac{1}{2}$ and $\lambda \ge 2$, the divergence function $\text{Div}^X_{\delta, \lambda}(r)$ is a quasi-isometry invariant  when $X$ is restricted to metric spaces which are the Cayley graph of a finitely generated group \cite{Drutu-Mozes-Sapir}[Corollary 3.12]. 
In light of this, we can define the \textit{divergence of a finitely generated group} to be the divergence (with $\delta \le \frac{1}{2}$ and $\lambda \ge 2$) of a Cayley graph of the group with respect to a finite generating set, up to the equivalence relation $\asymp$.
We remark that the divergence of a group is equivalent to~$\infty$ if and only if the group is not one-ended. 

We now describe the notion of the divergence of a geodesic.
Let  $\gamma: \mathbb{R} \to X$ be a bi-infinite geodesic with basepoint $b$ in the metric space $X$. The \textit{geodesic divergence} of $\gamma$ is the the function $\text{Div}^{\gamma}_{\delta, \lambda}(r) = \text{div}_{\delta, \lambda}(\gamma(r), \gamma(-r), b)$. 
It is immediate from the definitions that given a bi-infinite geodesic $\gamma$ in a metric space $X$, we have that $\text{Div}^\gamma_{\delta, \lambda}(r) \preceq \text{Div}^X_{ \delta, \lambda}(r)$. Thus, geodesic divergence gives a lower bound on the divergence of a space. Furthermore, it is not difficult to show, that $\text{Div}^{\gamma}_{\delta, \lambda}(r) \asymp \text{Div}^{\gamma}_{1, 0}(r)$. Thus, when computing geodesic divergence, we can always assume that $\delta = 1$ and $\lambda = 0$.

\subsection{Strongly thick metric spaces}

In this article, we will not directly use the definition of strongly thick spaces, as we are able to apply known results giving relationships between thickness, divergence and the hypergraph index (see Theorem \ref{thm_thick_bounds_div} and Theorem \ref{thm:hi_bounds_thick}). For completeness, we still include the definition here.
We refer the reader to \cite{Behrstock-Drutu} and \cite{Behrstock-Drutu-Mosher} for more detailed backgrounds. 

Let $C, L > 0$ be constants. A subset $Y$ of a metric space $X$ is $(C,L)$--quasi-convex if given any $y, y' \in Y$ there exists an $(L,L)$--quasi-geodesic contained in $N_C(Y)$ from $y$ to $y'$.

A metric space is \textit{strongly $(C,L)$--thick of order $0$} if the following two conditions hold: (1) No asymptotic cone of $X$ contains a cutpoint (equivalently, $\text{Div}_{\delta, \lambda}^X(r)$ is a linear function for every $0 < \delta < \frac{1}{54}$ and $\lambda \ge 0$ \cite{Drutu-Mozes-Sapir});
(2) For each $x \in X$, there exists a bi-infinite $(L, L)$--quasi-geodesic in $X$ which intersects the ball $B_x(C)$.
A metric space that is strongly $(C,L)$--thick of order $0$ for some $C$ and $L$, is also called \textit{wide}.

For each integer $k \ge 1$, a metric space $X$ is \textit{strongly $(C, L)$--thick of order at most $k$} if there is a collection $\mathcal{Y}$ of $(C, L)$--quasi-convex subsets of $X$ 
which are each strongly $(C,L)$--thick of order at most $k-1$ with respect to each of their induced metrics.
Moreover, we have that  $X = \bigcup_{Y \in \mathcal Y}N_C(Y)$, and, additionally, for every $Y, Y' \in \mathcal{Y}$ and every $x \in X$ such that $B_x(3 C) \cap Y \neq \emptyset$ and $B_x(3C) \cap Y' \neq \emptyset$, it follows that there exists a sequence $Y = Y_1, \dots, Y_n = Y'$ of subspaces in $\mathcal Y$, with $n \le L$,
such that for all $1 \le i < n$, $N_C(Y_i) \cap N_C(Y_{i+1})$ has infinite diameter, $N_C(Y_i) \cap N_C(Y_{i+1}) \cap B_x(L) \neq \emptyset$ and $N_L(N_C(Y_i) \cap N_C(Y_{i+1}))$ is path connected.

We say that a metric space is \textit{strongly thick of order $k$} if it is strongly $(C,L)$--thick of order at most $k$ for some $C, L > 0$ and is not strongly $(C', L')$--thick of order $k-1$ for any choices of $C',L' > 0$ and any choice of subspaces. The order of strong thickness is a quasi-isometry invariant (see \cite{Behrstock-Drutu} and~\cite{Behrstock-Drutu-Mosher}). Behrstock-Dru\c{t}u also show that the order of strong thickness gives an upper bound on divergence:
\begin{theorem}[Corollary 4.17 of \cite{Behrstock-Drutu}] \label{thm_thick_bounds_div}
	Let $X$ be a metric space which is strongly thick of order at most $k$, then $\text{Div}^X_{\delta, \lambda}(r) \preceq r^{k+1}$ for all $0 < \delta < \frac{1}{54}$ and all $\lambda \ge 0$.
\end{theorem}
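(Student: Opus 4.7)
The plan is to induct on the order of thickness $k$. For the base case $k = 0$, strong thickness of order $0$ is by definition the \emph{wide} condition, which the preliminaries identify (via Dru\c{t}u--Mozes--Sapir) with $\text{Div}^X_{\delta, \lambda}(r)$ being linear for every $0 < \delta < 1/54$ and $\lambda \ge 0$. A linear function is dominated by a polynomial of degree $k+1 = 1$, so the base case is immediate.

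For the inductive step, assume the statement for order $k-1$ and let $X$ be strongly $(C, L)$--thick of order at most $k$ with decomposing family $\mathcal{Y}$ of $(C, L)$--quasi-convex subspaces, each strongly thick of order at most $k-1$ in its induced metric. Given $x, y, b \in X$ with $d(x, y) \le r$ and $n = \min\{d(b, x), d(b, y)\}$, observe first that when $n$ is much larger than $r$ (depending only on $\delta, \lambda$), any $X$--geodesic from $x$ to $y$ automatically avoids $B_b(\delta n - \lambda)$ by the triangle inequality, producing a path of length $\le r$. We may therefore assume $n$ is comparable to $r$.

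The main construction is to produce a chain $Y_1, \dots, Y_m \in \mathcal{Y}$ with $x \in N_C(Y_1)$, $y \in N_C(Y_m)$, $m = O(r)$, and with each consecutive pair $Y_i, Y_{i+1}$ having infinite-diameter coarse intersection. This is done by walking along an $X$--geodesic from $x$ to $y$ in steps of bounded length, covering each waypoint by a subspace in $\mathcal{Y}$, and invoking the local chaining property of the thickness definition at each step to bridge consecutive covering subspaces. Inside each $N_C(Y_i) \cap N_C(Y_{i+1})$, which has infinite diameter, choose a connecting point $z_i$ with $d(z_i, b)$ comparable to $n$; set $z_0 = x$ and $z_m = y$. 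Since all $z_i$ lie within a bounded neighborhood of the sphere of radius $\approx n \approx r$ about $b$, we have $d(z_{i-1}, z_i) = O(r)$. Applying the inductive hypothesis inside $Y_i$ in its induced metric, we obtain a path from $z_{i-1}$ to $z_i$ within $Y_i$ of length $O(r^k)$ that avoids a suitably shrunken ball around $b$. Concatenating these $m = O(r)$ paths yields a total path of length $O(r^{k+1})$ from $x$ to $y$ that avoids $B_b(\delta n - \lambda)$ in $X$.

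The main obstacle I expect is the careful management of the constants $\delta$ and $\lambda$ across the induction. At each level, paths produced by the inductive hypothesis live in the induced metric of a subspace $Y_i$, and transferring them back into $X$ uses the $(C, L)$--quasi-convex quasi-geodesics that may stray within distance depending on $C$ and $L$ of the ball we are trying to avoid. Consequently the effective $\delta$ must shrink at each of the $k$ inductive steps, and the specific threshold $1/54$ in the statement is engineered precisely so that the effective parameter remains positive throughout. The infinite-diameter hypothesis on coarse intersections is exactly what allows us to push each $z_i$ far enough from $b$ to absorb these cumulative shifts; on the other hand, keeping $d(z_{i-1}, z_i)$ of order $r$ requires that we do not push them too far, so this trade-off is where the delicate part of the proof lies.
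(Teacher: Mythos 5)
You should note first that the paper does not prove this statement at all: it is imported verbatim as Corollary 4.17 of Behrstock--Dru\c{t}u, and the author explicitly states in Section 2 that the definition of strong thickness is never used directly, only this black-box consequence of it. So there is no in-paper proof to compare against; the relevant comparison is with the argument in the cited reference. Your sketch is, in outline, exactly that argument: induction on the order of thickness, with the base case supplied by the Dru\c{t}u--Mozes--Sapir equivalence between wideness and linear divergence (with the $1/54$ threshold inherited from there), and the inductive step carried out by chaining $O(r)$ pieces along a geodesic from $x$ to $y$, using the infinite-diameter coarse intersections to place the transition points $z_i$ far from $b$, and applying the order-$(k-1)$ bound inside each piece to get $O(r)\cdot O(r^{k})=O(r^{k+1})$. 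The two points you flag as delicate --- transferring paths from the induced metric of a $(C,L)$--quasi-convex piece back into $X$ without re-entering the forbidden ball, and the attendant shrinking of the effective $\delta$ --- are indeed where the original proof does its quantitative work, and a complete write-up would have to carry those constants explicitly (this is the reason the statement is phrased for all $\lambda\ge 0$ and a restricted range of $\delta$ rather than for a single pair of parameters). As a blind reconstruction of the cited result your proposal is the right strategy and has no conceptual gap, but be aware that within this paper the correct move is simply to cite Behrstock--Dru\c{t}u rather than reprove it.
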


\subsection{Right-angled Coxeter groups}

We refer the reader to \cite{Davis} for the general theory of Coxeter groups and to \cite{Dani-survey} for a survey on RACG results.

Given a RACG $W_{\Gamma}$ and  $w = s_1 \dots s_n$, with each $s_i \in V(\Gamma)$, we say that $w$ is a \textit{word in} $W_{\Gamma}$.
We say that the word $w'$ is \textit{an expression} for the word $w$ if $w$ and $w'$ are equal as group elements of $W_{\Gamma}$. 
Given a word $w = s_1 \dots s_n$, its \textit{length} $|w|$ is $n$. 
We say that a word $w$ is \textit{reduced} if $|w|$ is minimal out of all possible expressions for $w$.  

A RACG $W_\Gamma$ acts geometrically on a CAT(0) cube complex $\Sigma_\Gamma$ known as the \textit{Davis complex}. 
The $1$-skeleton of $\Sigma_\Gamma$ is the Cayley graph of $W_\Gamma$ (with the standard generating set) where bigons are collapsed to single edges. 
The edges of $\Sigma_\Gamma$ are labeled by the generators $V(\Gamma)$. 
Moreover, for $n \ge 2$, there is an $n$--cube in $\Sigma_\Gamma$ spanning any set of $2^n$ edges which is (label-preserving) isomorphic to the Cayley graph of $W_{K}$ where $K$ is a subclique of $\Gamma$. 
We refer the reader to \cite{Davis} and \cite{Wise-riches} for further background on the Davis complex and CAT(0) cube complexes respectively. We only directly utilize CAT(0) cube complexes in the proof of Theorem \ref{intro_thm:div} given in the final section.

\subsection{Disk Diagrams}
A \textit{disk diagram} over a RACG $W_\Gamma$ is square complex $D$, with a fixed planar embedding, whose edges are labeled by vertices of $\Gamma$.
Moreover, given a square in $D$, the label of its edges, read in cyclic order, is $stst$ where $s$ and $t$ are a pair of adjacent vertices of $\Gamma$.
All disk diagrams in this article are over RACGs. We refer the reader \cite{Sageev-thesis} and \cite{Wise-riches} for the general theory of disk diagrams over CAT(0) cube complexes.

A square $[0,1] \times [0,1]$ in the disk diagram $D$ contains two \textit{midcubes}: $\{\frac{1}{2} \} \times [0,1]$ and $[0,1] \times \{\frac{1}{2} \}$.
A \textit{dual curve} $H$ in $D$ is a minimal, non-empty, connected collection of midcubes in $D$ such that given any pair of midcubes $m$ and $m'$ in $D$, whose intersection is contained in an edge of $D$, it follows that $m \in H$ if and only if $m' \in H$.
We say that an edge of $D$ is \textit{dual} to $H$ if $H$ intersects this edge.
The \textit{carrier} $N(H)$ of a dual curve $H$ is the set of all cells in $D$ which the dual curve intersects. 
As opposite sides of squares in $D$ have the same label, every edge dual to a given dual curve has this same label which we call the \textit{type} of the dual curve.
It readily follows by how squares are labeled in $D$ that the types of a pair of intersecting dual curves consist of a pair of distinct adjacent vertices in $V(\Gamma)$. 
We can also deduce that no dual curve contains both mid-cubes of a given square.
We will frequently use these facts throughout.

A \textit{path in $D$} is a sequence $e_1, \dots, e_n$ of edges in the $1$--skeleton of $D$ along with, for each edge, a choice of orientation so that the endpoint of $e_i$ is incident to the startpoint of $e_{i+1}$ for each $1 \le i < n$. Its \emph{label} is $s_1 \dots s_n$ where $s_i$ is the label of $e_i$. Additionally, we also consider a single vertex of $D$ to be a path with empty label.
We say that a path is \textit{reduced} if its label is a reduced word in the corresponding RACG.
A dual curve is \textit{dual} to a path, if it is dual to an edge contained in the path. 
A dual curve is dual to at most one edge of a reduced path (see \cite[Lemma 3.2.14]{Davis} for instance).
When we write $\gamma = \gamma_1 \gamma_2$ is a path, it is understood that $\gamma_1$ and $\gamma_2$ are paths, the endpoint of $\gamma_1$ is the startpoint of $\gamma_2$ and $\gamma$ is the concatenation of $\gamma_1$ and $\gamma_2$.
Finally, we say that a path is \textit{simple} if it is topologically a simple path.

We say that the disk diagram $D$ has \textit{boundary path} $\gamma$ if $\gamma$ is a path in $D$ containing every edge on the boundary of $D$ (with respect to the given planar embedding) and is minimal length out of such possible paths.
Note that the sequence of edges $e_1, \dots, e_n$ associated to a boundary path can repeat edges (i.e., $e_i = e_{i+1}$ for some $i$ is possible), and boundary paths are not necessarily reduced.
The \textit{basepoint} of a disk diagram with boundary path $\gamma$ is defined to be the starting vertex of $\gamma$.
Given any word $w$ in the RACG $W_{\Gamma}$ which represents the identity element of $W_{\Gamma}$, it follows from van Kampen's lemma that there is a disk diagram with boundary path labeled by $w$.
Finally, given a simple closed path $\eta$ in a disk diagram $D$, the \textit{subdiagram} $D' \subset D$ with boundary path $\eta$, is the largest subcomplex of $D$ that is contained in the closure of the bounded component of $\mathbb{R}^2 \setminus \eta$ (recall that, by its planar embedding, $D$ is a subset of $\mathbb{R}^2$).

\subsection{Hypergraph index}
We denote the vertex set and edge set of a graph $\Gamma$ respectively by $V(\Gamma)$ and $E(\Gamma)$. 
Let $T \subset V(\Gamma)$ be a subset of vertices of the graph $\Gamma$. The subgraph of $\Gamma$ \textit{induced} by $T$ is the subgraph whose vertex set is $T$ and whose edges consist of all edges in $\Gamma$ connecting a pair of vertices in $T$. We say that the graph $\Delta$ is a \textit{join} if  $\Delta$ contains two subgraphs $\Delta_1$ and $\Delta_2$ such that $V(\Delta) = V(\Delta_1) \cup V(\Delta_2)$ and every vertex of $\Delta_1$ is adjacent to every vertex of $\Delta_2$. We denote such a join graph by $\Delta = \Delta_1 \star \Delta_2$.

Let $\Delta$ be an induced subgraph of $\Gamma$ which decomposes as the join $\Delta = \Delta_1 \star \Delta_2$. We say that $\Delta$ is a \textit{wide subgraph} if, for each $i \in \{1,2\}$, $\Delta_i$ contains two non-adjacent vertices. Furthermore, we say that $\Delta = \Delta_1 \star \Delta_2$ is a \textit{strip subgraph} if $\Delta_1$ consists of exactly two non-adjacent vertices and $\Delta_2$ is a clique. 
We note that the Cayley graph of the RACG $W_\Delta$ is a wide metric space if $\Delta$ is wide, and $W_\Delta$ is isomorphic to $D_{\infty} \times \mathbb{Z}_2^k$ (which is quasi-isometric to $\mathbb{Z}$) if $\Delta$ is a strip subgraph.

Recall that a \textit{hypergraph} $\Lambda$ is a set of vertices $V(\Lambda)$ and a set of \textit{hyperedges} $\mathcal{E}(\Lambda)$, where a hyperedge is a non-empty subset of $V(\Lambda)$. In particular, a graph is just a hypergraph whose hyperedges each contain exactly two vertices.

Fix now a simplicial graph $\Gamma$.
Let $\Omega$ be the set of all maximal wide subgraphs of $\Gamma$, and let $\Psi$ be the set of all maximal strip subgraphs of $\Gamma$.
Let $\Lambda_0 = \Lambda_0(\Gamma)$ be the hypergraph with vertex set $V(\Gamma)$ and hyperedge set $\{V(\Delta) ~ | ~ \Delta \in \Omega \cup \Psi \}$.

We now define hypergraphs $\Lambda_n = \Lambda_n(\Gamma)$ inductively for integers $n > 0$. 
Suppose that the hypergraph $\Lambda_i$ is defined for some $i$.
First, we define an equivalence class $\equiv_i$ on the hyperedges of $\Lambda_i$: given hyperedges $E, E' \in \mathcal{E}(\Lambda_i)$, $E \equiv_i E'$ if there exist a sequence $E = E_1, \dots, E_n = E'$ of hyperedges in $\mathcal{E}(\Lambda_i)$ such that for each $1 \le i < n$, $E_i \cap E_{i+1}$ contains a pair of distinct vertices which are not adjacent in $\Gamma$.
We now define $\Lambda_{i+1}$. The vertex set of $\Lambda_{i+1}$ is equal to $V(\Gamma)$. 
Furthermore, $E \subset V(\Gamma)$ is a hyperedge of $\Lambda_{i+1}$ if and only if $E =  E_1 \cup \dots \cup E_m$ where $\{E_1, \dots, E_m\}$ is a maximal collection of $\equiv_i$--equivalent hyperedges of $\Lambda_i$.
For each $0 \le i < \infty$, we say that $\Lambda_i$ is the \textit{$i$'th hypergraph associated to $\Gamma$}.

We now define the \textit{hypegraph index} of $\Gamma$. 
Suppose first that $\Omega \neq \emptyset$. Then the hypergraph index of $\Gamma$ is defined to be the smallest integer $k \ge 0$ such that the $k$'th hypergraph, $\Lambda_k$, associated to $\Gamma$ contains a hyperedge $E$ such that $E = V(\Gamma)$. 
If no such $k$ exists, we set the hypergraph index of $\Gamma$ to be $\infty$. 
Additionally, if $\Omega = \emptyset$ we also set the hypergraph index of $\Gamma$ to be $\infty$. 
We refer the reader to \cite{Levcovitz-thick}[Figure 1] for an explicit example of the computation of the hypergraph index. See also Figure~\ref{fig:example} for examples of specific graphs of hypergraph index $0$, $1$, $2$ and $3$.

The hypergraph index of $\Gamma$ is $\infty$ if and only if the RACG $W_\Gamma$ is relatively hyperbolic (see \cite{Levcovitz-thick}). The hypergraph index also gives an upper bound on the order of strong thickness:
\begin{theorem}[Theorem B from \cite{Levcovitz-thick}] \label{thm:hi_bounds_thick}
	Let $W_\Gamma$ be a RACG with hypergraph index $k \neq \infty$, then $W_\Gamma$ is strongly thick of order at most~$k$.
\end{theorem}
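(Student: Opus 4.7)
The plan is to argue by induction on the hypergraph index $k$, using as the thick pieces at level $k$ the cosets of the standard parabolic subgroups $W_E$, where $E$ runs over the hyperedges of $\Lambda_{k-1}(\Gamma)$ (and, for the base case, over the hyperedges of $\Lambda_0(\Gamma)$). Because standard parabolic subgroups of RACGs are convex in the Davis complex, all such cosets are uniformly quasi-convex subsets of the Cayley graph of $W_\Gamma$, so the quasi-convexity requirement in the definition of strong thickness comes for free. For the base case $k=0$ one has $V(\Gamma) \in \mathcal{E}(\Lambda_0)$ with $\Omega \neq \emptyset$, which forces $\Gamma$ itself to be a maximal wide subgraph $\Delta_1 \star \Delta_2$ (the strip case degenerates into this once $\Omega \neq \emptyset$); then $W_\Gamma = W_{\Delta_1} \times W_{\Delta_2}$ is a product of two infinite groups, each containing a copy of $D_\infty$, and a standard flat argument shows such products are wide.

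For the inductive step, fix $k \geq 1$ and assume the statement for all RACGs of hypergraph index strictly less than $k$. I would verify the following in order. (i) \emph{Reduction of hypergraph index on pieces.} For every $E \in \mathcal{E}(\Lambda_{k-1}(\Gamma))$, the induced subgraph $\Gamma_E$ has hypergraph index at most $k-1$: the hyperedges of $\Lambda_i(\Gamma)$ that are contained in $E$ restrict to hyperedges of $\Lambda_i(\Gamma_E)$, and the $\equiv_i$-merging that ultimately absorbs $E$ inside $\Lambda_k(\Gamma)$ already takes place inside $\Lambda_{k-1}(\Gamma_E)$. By the inductive hypothesis, $W_E$ is strongly thick of order at most $k-1$, and hence so is every coset $gW_E$. (ii) \emph{Covering.} Since $V(\Gamma)$ is a hyperedge of $\Lambda_k(\Gamma)$, it is a union of $\equiv_{k-1}$-equivalent hyperedges, so every generator of $W_\Gamma$ lies in some such $E$; translating by group elements, the cosets $\{gW_E\}$ cover the Cayley graph. (iii) \emph{Infinite-diameter intersections.} If $E,E'$ share two non-adjacent vertices $s,t$ of $\Gamma$, then $W_E \cap W_{E'} \supseteq \langle s,t \rangle \cong D_\infty$ has infinite diameter, and the same holds coset-wise after translation. (iv) \emph{Bounded chains.} Finiteness of $\mathcal{E}(\Lambda_{k-1})$ yields a uniform bound $L$ on the length of any $\equiv_{k-1}$-chain linking two hyperedges of the unique class producing $V(\Gamma)$; equivariance of the construction propagates this bound to chains of cosets, and the path-connectedness of thickened intersections follows because each intersection contains a translate of a standard parabolic on at least two non-adjacent generators.

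The main obstacle I anticipate is step (i): the hypergraphs $\Lambda_i$ are defined globally, using the adjacency relation of the ambient $\Gamma$, so one must carefully check that passing to an induced subgraph $\Gamma_E$ does not lose $\equiv_i$-mergings needed to complete the hypergraph index computation in at most $k-1$ stages. A secondary technical point is translating the purely combinatorial chain condition on hyperedges into the geometric strong-thickness conditions that intersections satisfy $N_C(Y_i) \cap N_C(Y_{i+1}) \cap B_x(L) \neq \emptyset$ near a given basepoint and that $N_L(N_C(Y_i) \cap N_C(Y_{i+1}))$ is path connected; this requires choosing the coset representatives uniformly close to $x$ and using convexity of parabolics in the Davis complex to control the basepoint localization.
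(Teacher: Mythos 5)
First, a point of reference: this paper does not prove the statement at all --- it is imported verbatim as Theorem~B of \cite{Levcovitz-thick}, and the present article only ever uses it as a black box. So the comparison here is against the strategy of the cited source, which is indeed the induction you describe: parabolic subgroups $W_E$ indexed by hyperedges, with the $\equiv_{i}$--chains supplying the infinite-diameter coarse intersections via $D_\infty$ subgroups, and convexity of parabolics in the Davis complex supplying quasi-convexity. Your base case, your covering step, and your reduction (i) for the \emph{wide-type} hyperedges are all sound; in particular the worry you flag about $\equiv_i$--mergings being lost in $\Gamma_E$ does not materialize, since $\Gamma_E$ is an induced subgraph, non-adjacency is inherited, and every hyperedge of $\Lambda_j(\Gamma)$ contained in $E$ sits inside a hyperedge of $\Lambda_j(\Gamma_E)$.

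The genuine gap is the strip hyperedges, which your sketch only dismisses in the base case. A hyperedge $E$ of $\Lambda_{k-1}(\Gamma)$ can perfectly well be the vertex set of a single maximal strip subgraph $\{a,b\}\star K$ (a singleton $\equiv$--class that persists through every level): take $\Gamma$ a $4$--cycle on $\{a,c,b,d\}$ together with a vertex $e$ adjacent only to $a$ and $b$; then $\Lambda_0$ has the wide hyperedge $\{a,b,c,d\}$ and the strip hyperedge $\{a,b,e\}$, the hypergraph index is $1$, and your collection of order-$0$ pieces includes cosets of $W_{\{a,b,e\}}\cong D_\infty\times\mathbb{Z}_2$. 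For such $E$ your step (i) is false on both counts: $\Gamma_E$ contains no wide subgraph, so its hypergraph index is $\infty$, and $W_E$ is a quasi-line, hence two-ended with cutpoints in every asymptotic cone, so it is not strongly thick of \emph{any} order and cannot serve as a piece. The repair (which is how the cited argument proceeds) is to observe that a strip has a unique non-adjacent pair $\{a,b\}$, so any hyperedge to which it is directly $\equiv$--chained must contain $\{a,b\}$; since $W_{\{a,b\}\star K}$ lies at Hausdorff distance at most $|K|$ from $\langle a,b\rangle$, every strip coset is contained in a uniformly bounded neighborhood of a neighboring non-strip piece. One then discards the strips from $\mathcal{Y}$ altogether: covering is unaffected (enlarge $C$), and they are redundant as chain links because two pieces chained through a strip already share the pair $\{a,b\}$. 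Your proposal needs this absorption step to close; without it the inductive hypothesis is being applied to pieces for which it is not available.
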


\section{Fences in disk diagrams} \label{sec:fences}
\emph{In this section we introduce the notion of $L$--fences in disk diagrams. 
	We show how these objects relate to the hypergraph index in Proposition \ref{prop:hyp_index_of_fences}, and in Proposition \ref{prop:intersects_a_spoke} we show that, in some sense, $L$--fences separate a disk diagram.
	We also define $L$--splitting points, which roughly measure the largest ``height'' of an $L$--fence connecting two given paths. }

\subsection{$L$--fences}
Before defining $L$--fences, we first define spokes which will serve as the building blocks of an $L$--fence.

\begin{definition}[Spoke] \label{def_spoke}
  A \textit{spoke} $\mathcal{S}$ in a disk diagram $D$ is a two-element set $\mathcal{S} = \{H, K\}$ where $H$ and $K$
  are dual curves in $D$ whose types are distinct non-adjacent vertices of $\Gamma$. The \textit{type} of the spoke
  $\{H, K\}$ is the pair $\{s, t\}$ where $s$ and $t$ are the types of $H$ and
  $K$ respectively.
\end{definition}

\begin{remark}
	We remark that given a spoke $\{H, K\}$ in the disk diagram $D$, it follows that $H$ and $K$ are distinct and do not intersect in $D$ (as their types are non-adjacent vertices of $\Gamma$).
\end{remark}

We say that a dual curve $Q$ (resp. a path $\gamma$) \textit{intersects} a spoke $\mathcal{S} = \{H, K\}$, if both $H \cap Q \neq \emptyset$ and $K \cap Q \neq \emptyset$ (resp. both $H \cap \gamma \neq \emptyset$ and $K \cap \gamma \neq \emptyset$). A spoke $\mathcal{S} = \{H, K\}$ \textit{intersects} the spoke $\mathcal{S}' = \{H', K'\}$ if both $H$ intersects $\mathcal{S}'$ and $K$ intersects $\mathcal{S}'$.

An $L$--fence, inductively defined below, is a set of spokes in a disk diagram satisfying a certain intersection pattern.

\begin{definition}[$L$-fence] \label{def_L_fence}
	Let $D$ be a disk diagram, and let $\mathcal{F}$ be a set of spokes in $D$.
	We say that $\mathcal{F}$ is a $0$--fence if $\mathcal{F}$ consists of a single spoke.
	For integers $L \ge 1$, we say that $\mathcal{F}$ is an $L$-fence if there exists a sequence $\mathcal{F}_1, \dots, \mathcal{F}_n$ of subsets of $\mathcal{F}$
  satisfying:
  \begin{enumerate}
  	\item For  each $1 \le i \le n$, $\mathcal{F}_i$ is an $(L-1)$--fence, 
  	\item for each $1 \le i < n$,
  	either there exists a spoke $\{H, K\}$ in $\mathcal{F}_i$ 
  	such that $H$ intersects a spoke in $\mathcal{F}_{i+1}$ and $K$ intersects a
  	(possibly different) spoke in $\mathcal{F}_{i+1}$, or, alternatively,
  	there exists a spoke $\{H', K'\}$ in $\mathcal{F}_{i+1}$ 
  	such that $H'$ intersects a spoke in $\mathcal{F}_{i}$ and $K'$ intersects a (possibly different)
  	spoke in $\mathcal{F}_{i}$, and
  	\item $\bigcup_{i=1}^n \mathcal{F}_i = \mathcal{F}$.
  \end{enumerate}
  We call $\mathcal{F}_1, \dots, \mathcal{F}_n$ a \emph{decomposition} of $\mathcal{F}$.
\end{definition}

\begin{figure}[htbp]
\begin{overpic}[scale=.3]{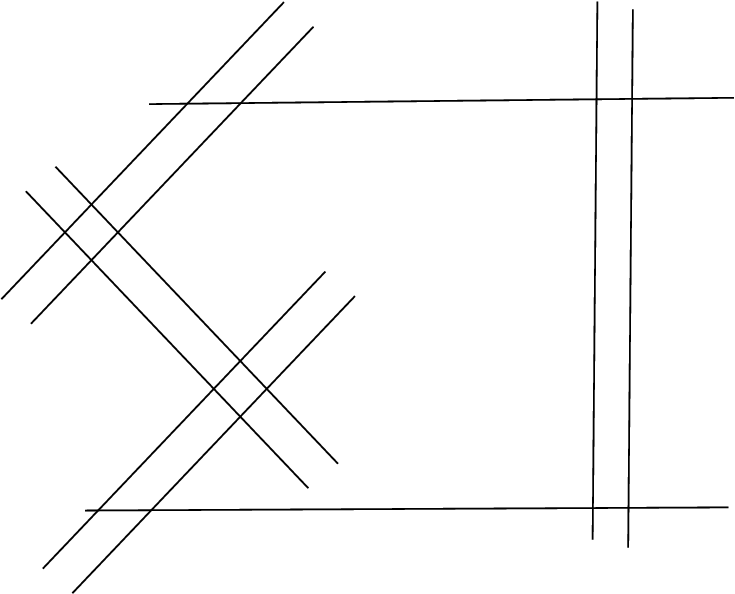}
	\put(-2,0){\Tiny $H_1$}	
	\put(14,0){\Tiny $K_1$}	
	
	\put(15,30){\Tiny $H_2$}	
	\put(24,40){\Tiny $K_2$}	
	
	\put(-8,36){\Tiny $H_3$}	
	\put(0,30){\Tiny $K_3$}
	
	\put(55,5){\Tiny $H_4$}	
	\put(55,70){\Tiny $K_4$}
	
	\put(70,30){\Tiny $H_5$}	
	\put(88,30){\Tiny $K_5$}

\end{overpic}
\caption{We have the spokes $\mathcal{S}_1 = \{H_1, K_1\}$, $\mathcal{S}_2 = \{H_2, K_2\}$, $\mathcal{S}_3 = \{H_3, K_3\}$, $\mathcal{S}_4 = \{H_4, K_4\}$ and $\mathcal{S}_5 = \{H_5, K_5\}$. $\mathcal{F} = \{\mathcal{S}_1, \mathcal{S}_2, \mathcal{S}_3\}$ is a $1$--fence with decomposition  $\mathcal{F}_1 = \{\mathcal{S}_1\}$, $\mathcal{F}_2 = \{\mathcal{S}_2\}$, $\mathcal{F}_3 = \{\mathcal{S}_3\}$.
Additionally, $\mathcal{F}' = \{\mathcal{S}_4, \mathcal{S}_5\}$ is also a $1$--fence with decomposition $\mathcal{F}_1' = \{\mathcal{S}_4\}$, $\mathcal{F}_2' = \{\mathcal{S}_5\}$. 
Finally, $\mathcal{F}'' = \{\mathcal{S}_1, \dots, \mathcal{S}_5\}$ is a $2$--fence with decomposition $\mathcal{F}_1'' = \mathcal{F}$, $\mathcal{F}_2'' = \mathcal{F}'$.}
\label{fig:l_fence}
\end{figure}

\begin{remark}\label{rmk:smaller_fences}
By definition, an $L$--fence is also an $L'$--fence for all $L' \ge L$.
\end{remark}
\begin{remark} \label{rmk:non_uniqueness}
	An $L$--fence admits several different decompositions. For instance, the $2$--fence $\mathcal{F}''$ of Figure~\ref{fig:l_fence} also admits the decomposition $\mathcal{F}_1'' = \mathcal{F}$, $\mathcal{F}_2''= \{\mathcal{S}_4\}$, $\mathcal{F}_3'' = \{\mathcal{S}_5\}$.
\end{remark}

We say that a dual curve \textit{intersects} an $L$--fence, if
 the dual curve intersects a spoke of the $L$--fence.
 A dual curve is \textit{contained in} the $L$--fence $\mathcal{F}$ if it is
  contained in some spoke of $\mathcal{F}$. 
One needs to take care when considering which spokes are contained an $L$--fence:

\begin{remark}
  Let $\mathcal{F}$ be an $L$-fence in a disk diagram $D$ and $\mathcal{S} = \{H, K\}$ a spoke in $D$ such that
  $H$ and $K$ are dual curves contained in $\mathcal{F}$. Then $\mathcal{S}$ may or may not be a spoke of~$\mathcal{F}$. 
\end{remark}

The next lemma gives a way of combining two $L$--fences into a larger $L$--fence.

\begin{lemma} \label{lem:combining_fences}
	Let $L \ge 0$ be an integer, and let $\mathcal{F}$ and $\mathcal{F}'$ be $L$--fences in a disk diagram. 
	If $\mathcal{F} \cap \mathcal{F}' \neq \emptyset$, then 
	$\mathcal{F}'' = \mathcal{F} \cup \mathcal{F}'$ is an $L$--fence.
\end{lemma}
\begin{proof}
	We prove the claim by induction on $L$. If $L = 0$, then $\mathcal{F} = \mathcal{F}' = \{ \mathcal{S} \}$ for some spoke $\mathcal{S}$. 
	The claim then follows trivially.
	
	Suppose now that $L > 0$ and the claim is true for $L-1$.
	Let $\mathcal{F}_1, \dots, \mathcal{F}_n$ and $\mathcal{F}_1', \dots, \mathcal{F}_{n'}'$ be decompositions of $\mathcal{F}$ and $\mathcal{F}'$ respectively.
	Let $\mathcal{S}$ be a spoke in $\mathcal{F} \cap \mathcal{F}'$, and let $j$ and $j'$ be such that $\mathcal{S}$ is contained in $\mathcal{F}_j$ and in $\mathcal{F}_{j'}'$.
	By the induction hypothesis, $\mathcal{T} = \mathcal{F}_j \cup \mathcal{F}_j'$ is an $(L-1)$--fence.
	It now readily follows that $\mathcal{F}'' = \mathcal{F} \cup \mathcal{F}'$ is an $L$--fence with decomposition:
	\[\mathcal{F}_1, \dots, \mathcal{F}_{j-1}, \mathcal{T}, 
	\mathcal F_{j' +1}', \dots, \mathcal{F}_{n'}', \dots \mathcal{F}_{j'+1}', \mathcal{T}, 
	\mathcal{F}_{j'-1}', \dots, \mathcal{F}_1', \dots, \mathcal{F}_{j'-1}', \mathcal{T},
	\mathcal{F}_{j+1}, \dots, \mathcal{F}_n \]
\end{proof}

An $L$--fence $\mathcal{F}$ in a disk diagram $D$ is \textit{maximal}, if 
every $L$--fence containing $\mathcal{F}$ is equal to $\mathcal{F}$.
By the previous lemma, there is a unique maximal $L$--fence containing any given spoke.

Given an $L$--fence $\mathcal{F}$, we define $V(\mathcal{F}) \subset V(\Gamma)$ to be the set of
all vertices $s \in V(\Gamma)$ which are the type of some dual curve in $\mathcal{F}$.
An $L$--fence, together with a collection of dual curves intersecting it, naturally corresponds to a subgraph of $\Gamma$ of hypergraph index at most $L$:

\begin{proposition} \label{prop:hyp_index_of_fences}
	Let $D$ be a disk diagram over the RACG $W_\Gamma$.
	Let $\mathcal{F}$ be an $L$--fence in $D$. 
	Let $Q_1,\dots, Q_m$ be a (possibly empty) sequence of dual curves in $D$
  of types respectively $q_1, \dots, q_m$ such that $Q_i$ intersects $\mathcal{F}$ for each $1 \le
  i \le m$. 
  Then the subgraph of $\Gamma$ induced by $V(\mathcal{F}) \bigcup \{q_1, \dots, q_m\}$ is either a strip subgraph or has hypergraph index at most $L$. 
\end{proposition}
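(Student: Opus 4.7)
My plan is to induct on $L$. For the base case $L=0$, the fence $\mathcal{F}$ is a single spoke $\{H,K\}$ of type $\{s,t\}$ with $s,t$ non-adjacent in $\Gamma$, and each $Q_i$, intersecting both $H$ and $K$, has type $q_i$ distinct from $s,t$ and adjacent to both (since intersecting dual curves have distinct adjacent types). Thus the induced subgraph decomposes as the join $\{s,t\}\star\Gamma[\{q_1,\dots,q_m\}]$; it is a strip subgraph when the $q_i$-side induces a clique, and is itself a wide subgraph (hence has hypergraph index $0$) otherwise.

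For the inductive step, write $\mathcal{F}=(\mathcal{F}_1,\dots,\mathcal{F}_n)$ with each $\mathcal{F}_j$ an $(L-1)$--fence. The case $n=1$ reduces to the inductive hypothesis, so suppose $n\ge 2$. For each $j$, I augment the transverse dual curves: let $\tilde T_j \subseteq V(\Gamma)$ consist of $V(\mathcal{F}_j)$ together with the types of those $Q_i$ which intersect $\mathcal{F}_j$ and the types of every dual curve appearing in a spoke of some $\mathcal{F}_{j'}$, $j'\ne j$, which happens to intersect $\mathcal{F}_j$. Applying the inductive hypothesis to $\mathcal{F}_j$ with this enlarged family yields that $\tilde\Delta_j := \Gamma[\tilde T_j]$ is either a strip subgraph or has hypergraph index at most $L-1$. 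A parallel induction on $k$ (which I would record as a short monotonicity lemma) shows that for any induced subgraph $\Delta'\subseteq\Delta$, every hyperedge of $\Lambda_k(\Delta')$ is contained in some hyperedge of $\Lambda_k(\Delta)$; the key point is that non-adjacency is inherited from $\Gamma$, so $\equiv_i$--equivalences in $\Delta'$ lift to $\equiv_i$--equivalences in $\Delta$ via the containing hyperedges. Combined with the observation that strip subgraphs of $\Delta$ are contained in hyperedges of $\Lambda_0(\Delta)\subseteq\Lambda_{L-1}(\Delta)$, this produces, for every $j$, a hyperedge $E_j$ of $\Lambda_{L-1}(\Delta)$ with $\tilde T_j\subseteq E_j$.

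Now the consecutive condition defining an $L$--fence furnishes, for each $1\le j<n$, a spoke $\{H,K\}$ of (say) $\mathcal{F}_j$ whose two dual curves both intersect $\mathcal{F}_{j+1}$. Its two types $s,t$ are non-adjacent in $\Gamma$ and, by construction of the $\tilde T_\bullet$'s, lie in both $\tilde T_j$ and $\tilde T_{j+1}$, hence in $E_j\cap E_{j+1}$. So $E_1,\dots,E_n$ form a chain of $\equiv_{L-1}$--equivalent hyperedges whose union covers $V(\mathcal{F})\cup\{q_1,\dots,q_m\}=V(\Delta)$, forcing $V(\Delta)$ to itself be a hyperedge of $\Lambda_L(\Delta)$. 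To upgrade this into the hypergraph index bound I also need $\Omega(\Delta)\ne\emptyset$: if any $\tilde\Delta_j$ has finite hypergraph index then it (and hence $\Delta$) contains a wide subgraph, and we are done. Otherwise every $\tilde\Delta_j$ is a strip $\tilde T_j=\{s_j,t_j\}\cup C_j$ with $C_j$ a clique; the connecting non-adjacent pair forces $\{s_j,t_j\}=\{s_{j+1},t_{j+1}\}$ (it is the only non-adjacent pair inside a strip), so iterating gives a common pair $\{s,t\}$ across all $j$. Either $\bigcup_j C_j$ is a clique and $\Delta$ is itself a strip subgraph (the first alternative of the proposition), or some non-adjacent $c,c'$ in distinct $C_j$'s produce the wide subgraph $\{s,t\}\star\{c,c'\}$ of $\Delta$. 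The main obstacle I anticipate is exactly this bookkeeping: one must choose the augmentations $\tilde T_j$ carefully enough that (a) the inductive hypothesis applies, (b) the connecting non-adjacent pair is automatically captured in $\tilde T_j\cap\tilde T_{j+1}$, and (c) the degenerate all-strips case collapses cleanly into either the strip alternative of the statement or a wide subgraph witnessing $\Omega(\Delta)\ne\emptyset$.
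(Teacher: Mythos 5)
Your proof is correct and follows essentially the same route as the paper's: the same base case, the same inductive decomposition of $\mathcal{F}$ into sub-fences with the same augmented vertex sets (your $\tilde T_j$ are the paper's $V_i$), and the same use of the non-adjacent pair shared by consecutive sub-fences to chain hyperedges. The paper's write-up simply concludes "hence hypergraph index at most $L$" at that point; your monotonicity lemma for $\Lambda_k$ under induced subgraphs and your analysis of the all-strips degenerate case are exactly the bookkeeping the paper leaves implicit, and they check out.
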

\begin{proof}
	We prove the lemma by induction on $L$.
	We first suppose that $L = 0$.
	In this case, $\mathcal{F}$ contains a single spoke of type $\{s, t\}$ for some non-adjacent vertices $s, t \in \Gamma$.
	It follows that the vertices $q_1, \dots, q_m$ are each adjacent to both $s$ and $t$ in $\Gamma$. 
	Thus, the vertices $\{q_1, \dots, q_m, s, t\}$ either induce a strip subgraph of $\Gamma$ (if $q_1, \dots, q_m$ is empty or spans a clique in $\Gamma$) or a wide subgraph of $\Gamma$ otherwise (thus, one with hypergraph index $0$). 
	This completes the base case.
	
	We now fix $L \ge 1$ and assume by induction that the claim is true for all
  $L' < L$. 
  Let $\mathcal{F}$ be an $L$--fence with decomposition $\mathcal{F}_1 \dots,  \mathcal{F}_n$.
  For each $1 \le i \le n$, let $V_i$ be the subset of vertices of $\Gamma$ consisting of
  $V(\mathcal{F}_i)$, the set of all $q_j \in \{q_1, \dots, q_m\}$ such that $Q_j$ intersects  $\mathcal{F}_i$ and, additionally, all vertices $s \in \Gamma$ such that there exists a dual curve $Q$ contained in $\mathcal{F}$ and of type $s$ which intersects $\mathcal{F}_i$.
  By the induction hypothesis, $V_i$ is either a strip subgraph or has hypergraph index at most $L-1$.
  Furthermore, we have that $V(\mathcal{F}) \bigcup \{q_1, \dots, q_m\}= \bigcup_{i=1}^{n} V_i$.

  Fix $1 \le i < n$.
  Suppose first that there exists a spoke $\mathcal{S} = \{H, K\}$, of type $\{h, k\}$, contained in
  $\mathcal{F}_i$ such that both $H$ and $K$ intersect $\mathcal{F}_{i+1}$.
  It follows that $h$ and $k$ is a pair of non-adjacent vertices of $\Gamma$ contained in $V_i \cap V_{i+1}$.
  On the other hand, if there is no such spoke $\mathcal{S}$, then by the definition of an $L$--fence there must exist
  a spoke of $\mathcal{S}' = \{H', K'\}$, of type $\{h', k'\}$, contained in $\mathcal{F}_{i+1}$ such that both $H'$ and $K'$ intersect $\mathcal{F}_i$. 
  In this case, $h'$ and $k'$ are non-adjacent vertices of $\Gamma$ both in $V_i \cap V_{i+1}$.
  Thus, $V(\mathcal{F}) \bigcup \{q_1, \dots, q_m\}= \bigcup_{i=1}^{n} V_i$
   has hypergraph index at most $L$.
\end{proof}

\begin{definition}[$L$--fence connected paths]
	Let $D$ be a disk diagram containing an $L$--fence $\mathcal{F}$ and paths $\gamma$ and $\gamma'$.
	We say that $\mathcal{F}$ is \textit{intersects} $\gamma$, if some spoke of $\mathcal{F}$ intersects $\gamma$.	
	We say that the path $\gamma'$ is \textit{$L$--fence connected} to the path $\gamma$ if some $L$--fence $\mathcal{F}'$ intersects both $\gamma$ and $\gamma'$. In this case, we also say that $\mathcal{F}'$ \textit{connects} $\gamma$ and $\gamma'$.
\end{definition}

It will often be the case that an $L$--fence $\mathcal{F}$ connects two paths, both of which are on the boundary of a disk diagram. 
Moreover, it will follow that in some sense $\mathcal{F}$ separates this disk diagram and any dual curve ``crossing'' $\mathcal{F}$ must intersect a spoke of $\mathcal{F}$. This is made precise in Proposition \ref{prop:intersects_a_spoke}. Before proving that proposition, we prove a lemma  showing that $L$--fences exhibit a certain connectivity property.

\begin{lemma} \label{lem:connectivity}
	Let $\mathcal{F} = \{\mathcal{S}_1, \dots, \mathcal{S}_k\}$ be an $L$--fence in a disk diagram $D$.
	Let $Y = H_1 \cup \dots \cup H_k \subset D$ be such that, for each $1 \le i \le k$, $H_i$ is a dual curve in $\mathcal{S}_i$. Then $Y$ is connected.
\end{lemma}
\begin{proof}
	The proof will be by induction on $L$. If $L=0$, then $Y$ consists of a single dual curve and so is connected. 
	Suppose now that $L > 0$ and that the statement is true for all $L' < L$. 
	Let $\mathcal{F}_1, \dots, \mathcal{F}_n$ be a decomposition of $\mathcal{F}$.	
	For each $1 \le i \le n$, we have that $\mathcal{F}_i = \{\mathcal{S}_{i_1} \dots \mathcal{S}_{i_{m_i}} \} \subset \mathcal{F}$. Set $Y_i := H_{i_1} \cup \dots \cup H_{i_{m_i}}$. By the induction hypothesis, $Y_i$ is connected.

	Fix $1 \le i < n$.
	Suppose that some spoke $\mathcal{S} = \{H, K\} \in \mathcal{F}_i$ is such that $H$ intersects
	a spoke in $\mathcal{F}_{i+1}$ and $K$ intersects a, possibly different, spoke in $\mathcal{F}_{i+1}$. 
	Up to relabeling $H$ and $K$, we can assume that $H \subset Y_i$.
	Let $\{H',K'\} \in \mathcal{F}_{i+1}$ be such that $H$ intersects $\{H', K'\}$.
	Up to relabeling $H'$ and $K'$, we can assume that $H' \subset Y_{i+1}$.
	As $H \cap H' \neq \emptyset$, it follows that $Y_i \cap Y_{i+1} \neq \emptyset$.
	On the other hand, if such a spoke $\mathcal{S}$ does not exists, then by the definition of an $L$--fence it follows that some spoke of $\mathcal F_{i+1}$ has each of its dual curves intersecting (possibly different) spokes of $\mathcal{F}_i$, and by a similar argument we still deduce that $Y_i \cap Y_{i+1} \neq \emptyset$. 
	Thus, $Y = Y_1 \cup \dots \cup Y_n$ is connected, as $Y_i \cap Y_{i+1} \neq \emptyset$ for all $1 \le i < n$.
\end{proof}

\begin{proposition} \label{prop:intersects_a_spoke}
	Let $D$ be a disk diagram with boundary path $\gamma \eta
  \gamma' \eta'$ such that $\gamma \cap \eta$, $\gamma' \cap \eta$, $\gamma \cap \eta'$ and $\gamma' \cap \eta'$ all consist of a single vertex.
  Let $\mathcal{F}$ be an $L$--fence connecting $\gamma$ and $\gamma'$.
  Then any dual curve that is dual to both $\eta$ and $\eta'$ intersects a spoke of $\mathcal{F}$.
\end{proposition}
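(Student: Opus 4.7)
The plan is a proof by contradiction combining Lemma~\ref{lem:connectivity} with a planar separation argument. Suppose for contradiction some dual curve $Q$ is dual to both $\eta$ and $\eta'$ but intersects no spoke of $\mathcal{F}$. For each spoke $\mathcal{S}_i = \{H_i, K_i\}$ of $\mathcal{F}$, the hypothesis $Q \cap \mathcal{S}_i = \emptyset$ forces $H_i \cap Q = \emptyset$ or $K_i \cap Q = \emptyset$; choose a dual curve $D_i \in \mathcal{S}_i$ that is disjoint from $Q$. Since $\mathcal{F}$ is connected to $\gamma$, it contains a spoke $\mathcal{S}_\gamma$ both of whose dual curves meet $\gamma$, so for this particular spoke the chosen $D_i$ still meets $\gamma$; a symmetric choice handles the spoke witnessing $\mathcal{F} \cap \gamma' \neq \emptyset$. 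By Lemma~\ref{lem:connectivity}, $Y := \bigcup_i D_i$ is a connected subset of $D$, disjoint from $Q$, which meets both $\gamma$ and $\gamma'$ at midpoints of boundary edges.

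The next step is a planar Jordan-type separation. Since $Q$ is a connected $1$-dimensional subcomplex of the topological disk $D$ containing the midpoint $m_e$ of some edge on $\eta$ and the midpoint $m_{e'}$ of some edge on $\eta'$, extract a simple arc $\tau \subset Q$ from $m_e$ to $m_{e'}$. Since $Y$ is connected and disjoint from $\tau$, it is contained in a single closed component of $D \setminus \tau$, and so $Y \cap \partial D$ lies in a single sub-arc $\alpha$ of $\partial D$ whose endpoints belong to $\tau \cap \partial D \supseteq \{m_e, m_{e'}\}$ and whose interior avoids $\tau \cap \partial D$. The cyclic order $\gamma \eta \gamma' \eta'$ on $\partial D$ shows that any path along $\partial D$ joining $\gamma$ to $\gamma'$ must traverse $\eta$ (which contains $m_e$) or $\eta'$ (which contains $m_{e'}$); hence $\alpha$ cannot meet both $\gamma$ and $\gamma'$, contradicting $Y \cap \gamma \neq \emptyset$ and $Y \cap \gamma' \neq \emptyset$.

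The main technical concern is in the planar topology step: the dual curve $Q$ may touch $\partial D$ at midpoints of boundary edges other than $m_e, m_{e'}$, and likewise may fail to be a simple arc, so $\tau \cap \partial D$ could properly contain $\{m_e, m_{e'}\}$. This only further subdivides $\partial D$ into smaller arcs by introducing additional cut points, but is harmless: any resulting sub-arc $\alpha$ still has its interior avoiding both $m_e$ and $m_{e'}$, so by the cyclic order argument it cannot contain points from both $\gamma$ and $\gamma'$. Thus the planar separation argument goes through with only minor bookkeeping.
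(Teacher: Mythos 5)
Your proof is correct and follows essentially the same route as the paper: contradiction, one $Q$-avoiding dual curve per spoke, Lemma~\ref{lem:connectivity} for connectivity of $Y$, then a Jordan-type separation in the disk. The only (immaterial) difference is that you use the arc of $Q$ from $\eta$ to $\eta'$ as the separator, whereas the paper uses a path in $Y$ from $\gamma$ to $\gamma'$ and concludes that $Q$ must cross it.
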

\begin{proof}
  The proof will be by contradiction. Let $Q$ be a dual curve, dual to both $\eta$ and $\eta'$. Let $\mathcal{F} = \{\mathcal{S}_1, \dots, \mathcal{S}_k\}$.
  We assume, for a contradiction, that $Q$ does not intersect any spoke of $\mathcal{F}$.
  Consequently,
  there is a set $\{H_1, \dots, H_k\}$ of dual curves such that for each $1 \le i \le k$, $H_i \in \mathcal{S}_i$ and $Q$ does not intersect $H_i$. Let $Y = H_1 \cup \dots \cup H_k$.
  Note that $Y$ is connected by Lemma~\ref{lem:connectivity} and that $Q \cap Y = \emptyset$.
  
  As $\mathcal{F}$ connects $\gamma$ to $\gamma'$, there are spokes $\{H, K\}$ and $\{H', K'\}$ of $\mathcal{F}$ which intersect $\gamma$ and $\gamma'$ respectively. 
  Thus, $Y$ contains a point $p$ of $\gamma$ and a point $p'$ of $\gamma'$. 
  Let $\zeta$ be a path in $Y$ from $p$ to $p'$. 
  By the structure of dual curves in a disk diagram, $\zeta$ can be chosen to not intersect $\eta$ or $\eta'$.
  As $\zeta$ separates $\eta$ from $\eta'$ in $D$, it follows that
  $Q$ intersects $\zeta \subset Y$, a contradiction.
\end{proof}

\subsection{Splitting Points}

In this subsection we define $L$--splitting points. Intuitively, an $L$--splitting point is the first point along an oriented path $\gamma_1$ such that the subpath of $\gamma_1$ after this point is not $L$--fence connected to another given path $\gamma_2$.

\begin{figure}[htbp]
	\begin{overpic}[scale=.4]{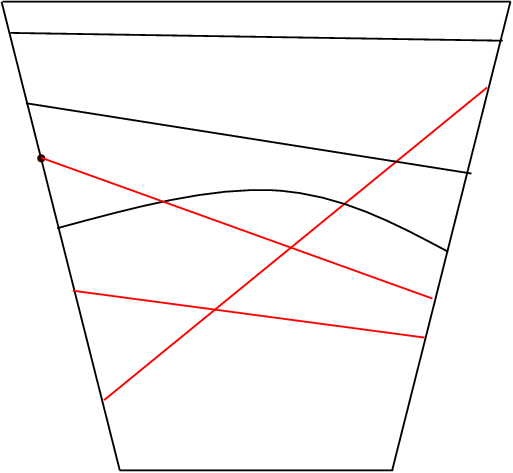}
	 	\put(30,18){\tiny$H_1$}	
	 	\put(20,35){\tiny$H_2$}	
	 	\put(15,59){\tiny$H_3$}	
	 	\put(15,86){\tiny$K_1$}	
	 	\put(15,71){\tiny$K_3$}	
	 	\put(45,57){\tiny$K_2$}			
		\put(-7,50){\tiny$\gamma_1$}
		\put(47,-8){\tiny$\gamma$}
		\put(95,50){\tiny$\gamma_2$}
		\put(47,95){\tiny$\alpha$}
		\put(2,61){\tiny$x$}
	\end{overpic}
	\caption{In the above example, the spokes which intersect $\gamma_1$ and are in a $0$-fence connecting $\gamma_1$ and $\gamma_2$ consist of $\{H_1, K_1\}$, $\{H_2, K_2\}$ and $\{H_3, K_3\}$. The $0$--splitting point $x$ is shown. In particular, $\{K_1, K_3\}$ cannot be a spoke.}
	\label{fig:splitting_pt}
\end{figure}

\begin{definition}[$L$--splitting point]
	Let $D$ be a disk diagram over a RACG with boundary path $\gamma_1 \alpha \gamma_2^{-1} \gamma$ such that 
	$\gamma$, $\gamma_1$ and $\gamma_2$  are reduced. Suppose additionally that $\gamma_1 \cap \gamma$ and $\gamma_2 \cap \gamma$ are each a single vertex and that $\gamma_1 \cap \gamma_2 = \emptyset$. 
	
	Let $L \ge 0$ be an integer. 
	Consider the (possibly empty) set 
	$\mathcal{R} = \{\{H_1,K_1\}, \dots, \{H_n,K_n\}\}$
	of spokes  in $D$ which both intersect $\gamma_1$ and are contained in an $L$--fence connecting $\gamma_1$ and $\gamma_2$. 
	By possibly relabeling, we can suppose that 
	for each $1 \le i \le n$, $H_i \cap \gamma_1$ occurs before $K_i \cap \gamma_1$ along the orientation of $\gamma_1$ and that
	for each $1 \le i < n$, either $H_i = H_{i+1}$ or $H_i \cap \gamma_1$ occurs before $H_{i+1} \cap \gamma_1$ with respect to the orientation of $\gamma_1$.
	We define the \textit{$L$--splitting point} of $(\gamma_1, \gamma_2; \gamma, \alpha)$ to be the point $H_n \cap \gamma_1$ if $\mathcal{R}$ is not empty and to be the starting point of $\gamma_1$ if $\mathcal{R}$ is empty.
\end{definition}

\begin{definition}[Initial and terminal paths with respect to an $L$--splitting point]
	Fix the notation as in the previous definition. 
	Let $x$ be the $L$--splitting point of $(\gamma_1, \gamma_2; \gamma, \alpha)$. Suppose first that $x$ is not equal to the starting point of $\gamma_1$.  
	Let $e$ be the edge of $\gamma_1$ whose midpoint is $x$.
	Let $\gamma_1'$ be the initial subpath of $\gamma_1$ from the starting point of $\gamma_1$ up to, and not including, $e$.
	Let $\gamma_1''$ be the subpath of $\gamma_1$ from $e$ to the endpoint of $\gamma_1$ which does not include $e$.
	On the other hand, if $x$ is the starting point of $\gamma_1$, then we define $\gamma_1' = x$ (a length $0$ path) and $\gamma_1'' = \gamma_1$.
	In either case, we say that $\gamma_1'$ and $\gamma_1''$ are, respectively, the \textit{initial and terminal paths of $\gamma_1$ with respect to the $L$--splitting point $x$}. 
\end{definition}

\begin{remark} \label{rmk:splitting_points}
	With the notation as in the previous two definitions, it is immediate that no $L$--fence in $D'$ connects $\gamma_1''$ and $\gamma_2$.
	Additionally, if $x$ is not equal to the starting point of $\gamma_1$, then there exists an $L$--fence in $D'$ connecting $\gamma_1 \setminus \gamma_1'' = \gamma_1' \cup e$ and $\gamma_2$. 
\end{remark}

\section{Structured sequences of dual curves} \label{sec:structured}

\emph{
Given a path $\gamma$ in a disk diagram, we will often need to find a sequence of dual curves, intersecting $\gamma$ that satisfy certain desirable properties. For instance, we will want these dual curves to be pairwise non-intersecting and to naturally correspond to spokes whose dual curves intersect $\gamma$ close to one another. 
In this section, we define such sequences of dual curves and prove we can find them in different settings.}

\begin{definition}[Structured sequence] \label{def:good_sequence}
	Let $D$ be a disk diagram over the RACG $W_\Gamma$.
	Let $\gamma$ be an oriented path in $D$. 
	We say that a sequence of dual curves $H_1, K_1, \dots, H_n, K_n$, each intersecting $\gamma$, is \textit{structured with respect to $\gamma$} if: 
	\begin{enumerate}
		\item The dual curves $H_1, K_1, \dots, H_n, K_n$ are ordered with respect to the orientation of $\gamma$. More precisely, for all $1 \le i \le n$, $H_i \cap \gamma$ occurs before $K_i \cap \gamma$ with respect to the orientation of $\gamma$ and for all $1 \le i < n$, $K_i \cap \gamma$ occurs before $H_{i+1} \cap \gamma$ with respect to the orientation of $\gamma$.
		
		\item There are non-adjacent vertices $s, t \in \Gamma$ such that, for all $1 \le i \le n$, $H_i$ and $K_i$ are of types $s$ and $t$ respectively.
	\end{enumerate}
	Similarly, we say that a sequence of spokes $\{H_1, K_1\}, \dots, \{H_n, K_n\}$ is structured with respect to $\gamma$ if the corresponding sequence of dual curves $H_1, K_1, \dots, H_n, K_n$ is structured with respect to $\gamma$. 
	Note that a sequence of dual curves is structured with respect to $\gamma$ if and only if the corresponding sequence of spokes is structured with respect to $\gamma$.
	
	If $\alpha$ is another path in $D$, then the sequence $H_1, K_1, \dots, H_n, K_n$  (resp. $\{H_1, K_1\}, \dots, \{H_n, K_n\}$)
	\textit{is structured with respect to $(\gamma, \alpha)$} 
	if it is structured with respect to $\gamma$ and, moreover, both $H_i$ and $K_i$ intersect $\alpha$ for all $1 \le i \le n$. 

	We say that a sequence $H_1, K_1, \dots, H_n, K_n$ (resp. $\{H_1, K_1\}, \dots, \{H_n, K_n\}$) structured with respect to $\gamma$ is \textit{tight} if for each $1 \le i \le n$, the smallest subpath of $\gamma$ containing both $H_i \cap \gamma$ and $K_i \cap \gamma$ is of length at most $|V(\Gamma)|$. 
\end{definition}

By (2) in the definition above, it follows that the dual curves, in a sequence of dual curves structured with respect to a path, are pairwise non-intersecting.
We will use this observation freely throughout.

Given a reduced path $\gamma$, the next lemma guarantees we can always find a tight sequence of dual curves structured with respect to $\gamma$ of size proportional to $|\gamma|$. 

\begin{lemma} \label{lem:nice_sequence}
	Let $\Gamma$ be a non-clique graph. 
	Let $\gamma$ be a reduced path in the disk diagram $D$ over the RACG $W_\Gamma$. 
	Then there is a tight sequence $H_1, K_1, \dots, H_n, K_n$ of dual curves structured with respect to $\gamma$ such that $n \ge  \frac{1}{|V(\Gamma)|^2} \Big\lfloor \frac{|\gamma|}{|V(\Gamma)|} \Big \rfloor$.
\end{lemma}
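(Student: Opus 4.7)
The plan is to slice $\gamma$ into consecutive blocks of a fixed size, extract one spoke from each block, and then apply pigeonhole to find many spokes sharing a common type pair. Write $N = |V(\Gamma)|$ and partition $\gamma$ into $m = \lfloor |\gamma|/N \rfloor$ disjoint consecutive subpaths $\alpha_1, \dots, \alpha_m$, each of length exactly $N$ (discarding any leftover initial or terminal edges). Since $\gamma$ is reduced, each $\alpha_j$ is a reduced subpath, and consequently distinct edges of $\alpha_j$ give rise to distinct dual curves in $D$.

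Next I would show that each $\alpha_j$ contains two edges whose labels are non-adjacent vertices of $\Gamma$. The argument is a short case analysis on the set $S_j \subset V(\Gamma)$ of labels appearing along $\alpha_j$. If $S_j$ contains a non-adjacent pair, we are done. Otherwise $S_j$ spans a clique $K$ of $\Gamma$; because $\Gamma$ is not a clique we have $|K| \le N-1$. Now observe that in any reduced word over $W_\Gamma$ whose letters pairwise commute, each letter can appear at most once — otherwise two copies could be brought together via commutation and then cancelled by $s^2=1$. Thus $|\alpha_j| \le |S_j| \le |K| \le N-1$, contradicting $|\alpha_j| = N$. So non-adjacent labels $s_j, t_j$ exist in $\alpha_j$, and the dual curves through two such edges form a spoke $\mathcal{S}_j = \{H_j, K_j\}$ with both dual curves crossing $\alpha_j$.

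For each $j$, orient $\mathcal{S}_j$ so that $H_j \cap \gamma$ precedes $K_j \cap \gamma$ along $\gamma$; this assigns to $\mathcal{S}_j$ an ordered type pair of non-adjacent vertices. Since there are at most $N(N-1) \le N^2$ such ordered pairs, by pigeonhole some pair $(s,t)$ is realized by at least $m/N^2$ of the spokes $\mathcal{S}_j$. Retaining those spokes, re-indexed as $\{H_1,K_1\}, \dots, \{H_n, K_n\}$ in the natural order inherited from the $\alpha_j$'s, yields a sequence with
\[ n \;\ge\; \frac{m}{N^2} \;=\; \frac{1}{|V(\Gamma)|^2}\left\lfloor \frac{|\gamma|}{|V(\Gamma)|} \right\rfloor, \]
which is precisely the required bound.

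It remains only to check that this sequence satisfies the two structuredness conditions and tightness. The ordering condition along $\gamma$ holds because the $\alpha_j$'s are disjoint consecutive subpaths of $\gamma$ (so spokes from different blocks are automatically ordered) and within each retained spoke we arranged $H_i$ before $K_i$ by construction. The common-type condition with non-adjacent $s,t$ is exactly what the pigeonhole step delivered. Tightness is immediate: for each retained spoke, both of its dual curves are dual to edges inside a single $\alpha_j$, so the smallest subpath of $\gamma$ containing both intersection points has length at most $|\alpha_j| = N = |V(\Gamma)|$. I do not expect any real obstacle; the only slightly delicate point is the clique-forces-short-reduced-subpath step, which relies on the standard fact that a reduced word in a right-angled Coxeter group whose letters pairwise commute uses each generator at most once.
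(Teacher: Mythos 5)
Your proof is correct and follows essentially the same route as the paper's: partition $\gamma$ into blocks of length $|V(\Gamma)|$, use reducedness plus the non-clique hypothesis to extract a spoke from each block (the paper phrases the clique case as forcing a repeated letter and hence a non-reduced subword, which is the same observation you make), and then pigeonhole over the at most $|V(\Gamma)|^2$ ordered non-adjacent type pairs. No gaps.
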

\begin{proof}
	Set $M := |V(\Gamma)|$. We partition $\gamma = \gamma_1 \dots \gamma_{m+1}$ such that, for each $1 \le i \le m$, $|\gamma_i| = M$ and $|\gamma_{m+1}| < M$.
	Note that $m = \big \lfloor \frac{|\gamma|}{M} \big \rfloor$. 
	For each $1 \le i \le m$, let $w_i = s_{i_1} \dots s_{i_M}$ be the label of $\gamma_i$. 
	
	We claim that, for each $1 \le i \le m$, $\{s_{i_1}, \dots, s_{i_M} \}$ contains a pair of distinct non-adjacent vertices of $\Gamma$. 
	For, suppose otherwise, that $s_{i_1}, \dots, s_{i_M}$ are all vertices in a common clique of $\Gamma$. 
	As $|V(\Gamma)| = M$ and as $\Gamma$ is not a clique, it follows that for some $1 \le j < j' \le M$ we have that $s_{i_j}$ and $s_{i_{j'}}$ are equal as vertices of $\Gamma$. 
	However, it also then follows that $w_i$ is not reduced, contradicting the fact that $\gamma$ is a reduced path. 
	This shows our claim.
	
	Thus, there exists a sequence of dual curves $P_1, Q_1, \dots, P_m, Q_m$ such that for each $1 \le i \le m$, $P_i$ and $Q_i$ both intersect $\gamma_i$ and the types $s_i$ and $t_i$, of $P_i$ and $Q_i$ respectively, are not adjacent in $\Gamma$.
	Moreover, the smallest subpath of $\gamma$ containing both $P_i \cap \gamma$ and $Q_i \cap \gamma$ has length at most $|V(\Gamma)|$ as both $P_i$ and $Q_i$ intersect $\gamma_i$.
	As there are at most $M (M -1) \le M^2$ possible pairs of non-adjacent vertices of $\Gamma$, by the pigeonhole principle there exist non-adjacent vertices $s, t \in \Gamma$ such that at least $\frac{m}{M^2}$ of the spokes $\{P_i, Q_i\}$ are of type $(s, t)$. Thus, there exists a subsequence $H_1 = P_{i_1}, K_1 = Q_{i_1}, \dots, H_n = P_{i_n}, K_n = Q_{i_n}$ which is structured with respect to $\gamma$ such that $n \ge \frac{1}{M^2} \big \lfloor \frac{|\gamma|}{M} \big \rfloor$ and which is tight.
\end{proof}

We will often need for ``enough'' spokes, each satisfying some property, to intersect a path. The following definition makes this notion precise.

\begin{definition}[$M$--adequate sets]
	Let $\gamma$ be a path in a disk diagram $D$. 
	
	A subset $\mathcal{P}(\gamma)$ of edges in $\gamma$ is called an \emph{edge set}.
	We say that $\mathcal{P}(\gamma)$ is \textit{$M$--adequate} if given any sequence of dual curves structured with respect to $\gamma$,  all but possibly $M$ of these dual curves are dual to an edge in $\mathcal{P}(\gamma)$.
	
	An \emph{edge-pair set} $\mathcal{R}(\gamma)$ is a set of unordered pairs $\{e, f\}$  such that $e$ and $f$ are edges of $\gamma$.
	We say that $\mathcal{R}(\gamma)$ is \textit{$M$--adequate} if given any sequence of spokes $\{H_1, K_1\}, \dots, \{H_t, K_t\}$ structured with respect to $\gamma$, then for all but possibly $M$ values of $1 \le i \le t$, there is a pair $\{e, f\} \in \mathcal{R}(\gamma)$ such that $H_i$ is dual to $e$ and $K_i$ is dual to $f$.
	
	When we write that $\mathcal{P}(\gamma)$ (resp. $\mathcal{R}(\gamma)$) is an edge (resp. edge-pair) set, it should be understood that these edges (resp. edge-pairs) are contained in the path $\gamma$.
	
	We say that a \emph{spoke $\{H, K\}$ is dual to the edge-pair $\{e_1, e_2\}$} if $H$ is dual to $e_1$ and $K$ is dual to $e_2$. Let $\mathcal{R}$ be a set of edge-pairs in $D$. We say that the \emph{spoke $\{H, K\}$ is in $\mathcal{R}$}, if it is dual to some edge-pair in $\mathcal{R}$.
\end{definition}

The next lemma, which will be heavily used in Section \ref{sec:div_bounds}, guarantees that, under the right hypotheses, we can find a tight sequence of dual curves structured with respect to~$(\gamma, \alpha)$. 

\begin{figure}[htbp]
	\begin{overpic}[scale=.6]{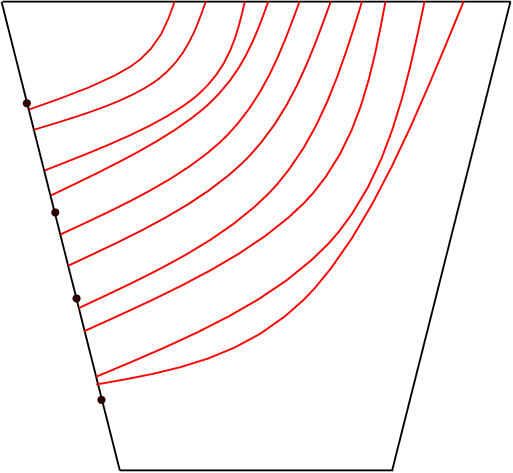}
		\put(50,-4){\Tiny$\beta$}
		\put(16,6){\Tiny$\psi$}
		\put(10,25){\Tiny$\mu_0$}
		\put(7,40){\Tiny$\mu_1$}
		\put(2,60){\Tiny$\mu_2$}
		\put(-1,80){\Tiny$\theta$}
		\put(50,93){\Tiny$\alpha$}
		\put(88,40){\Tiny$\eta$}
		
		\put(40,20){\Tiny$H_1$}
		\put(40,32){\Tiny$K_1$}

		\put(30,32){\Tiny$H_2$}
		\put(30,42){\Tiny$K_2$}

		\put(17,39.5){\Tiny$H_3$}
		\put(20,52){\Tiny$K_3$}
		
		\put(35,66){\Tiny$H_4$}
		\put(34,75){\Tiny$K_4$}
		
		\put(10,65){\Tiny$H_5$}
		\put(10,75.5){\Tiny$K_5$}
	\end{overpic}
	\caption{Conclusion of Lemma~\ref{lem:enough_curves} when $k=2$ and $n=5$. Note that each spoke intersects a single $\mu_j$.}
	\label{fig:enough_spokes}
\end{figure}

\begin{lemma} \label{lem:enough_curves}
	Let $\Gamma$ be a non-clique graph. 
	Let $D$ be a disk diagram over the RACG $W_\Gamma$ with boundary path $\psi \mu_0 \dots \mu_k \theta  \alpha \eta \beta$ such that $\psi \mu_0 \dots \mu_k$ is a reduced path and no dual curve intersects both $\psi \mu_0 \dots \mu_k$ and $\theta$.
	Let $M \ge \max{ \{ |V(\Gamma)|, |\beta| \} }$ be an integer. 
	Suppose that for each $0 \le i \le k$, there is an $M$-adequate edge-pair set $\mathcal{R}(\mu_i)$ 
	such that no spoke in $\mathcal{R}(\mu_i)$ intersects $\eta$.
	Then there exists a tight sequence of spokes $\{H_1, K_1 \}, \dots, \{H_n, K_n\}$ structured with respect to $(\mu_0 \dots \mu_k, \alpha)$ such that:
	\begin{enumerate}
		\item 
		$n \ge \frac{1}{M^2} \big \lfloor \frac{|\mu_0 \dots \mu_k|}{M} \big \rfloor - 3(k+1)M$
		
		\item
		For each $1 \le i \le n$, there exists some $0 \le j \le k$ such that $\{H_i, K_i\}$ is in $\mathcal{R}(\mu_j)$.
	\end{enumerate}
\end{lemma}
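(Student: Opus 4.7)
The plan is to apply Lemma \ref{lem:nice_sequence} to the reduced path $\mu_0\dots\mu_k$ to produce a long tight structured sequence of spokes and then to prune this sequence in three stages. If $\Gamma$ is a clique there are no spokes at all, each $\mathcal{R}(\mu_i)$ is empty, and the statement is vacuously true with $n=0$ (the claimed lower bound being non-positive), so I may assume $\Gamma$ is not a clique. Lemma \ref{lem:nice_sequence} then yields a tight sequence of spokes $\{P_1,Q_1\},\ldots,\{P_N,Q_N\}$ structured with respect to $\mu_0\dots\mu_k$, all of common type $\{s,t\}$ with $s,t$ non-adjacent, of size $N \ge \frac{1}{|V(\Gamma)|^2}\lfloor \frac{|\mu_0\dots\mu_k|}{|V(\Gamma)|}\rfloor \ge \frac{1}{M^2}\lfloor \frac{|\mu_0\dots\mu_k|}{M}\rfloor$, where the second inequality uses $|V(\Gamma)|\le M$.

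The first pruning discards \emph{straddling} spokes, namely those with $P_i$ and $Q_i$ meeting different segments $\mu_j$. At each of the $k$ vertices separating consecutive $\mu_j$'s, tightness forces the edge dual to $P_i$ of any straddling spoke to lie among the $M$ edges immediately preceding that vertex; since distinct $P_i$'s are dual to distinct edges (each edge of a disk diagram is dual to a unique dual curve), at most $M$ spokes can straddle any given boundary, for a total of at most $kM$ straddling spokes. The second pruning, for each $j$, discards the at most $M$ non-straddling spokes in $\mu_j$ that fail to lie in $\mathcal{R}(\mu_j)$: these non-straddling spokes form a subsequence structured with respect to $\mu_j$, so $M$-adequacy of $\mathcal{R}(\mu_j)$ supplies the bound, costing at most $(k+1)M$ spokes in total.

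The third pruning enforces that both $P_i$ and $Q_i$ intersect $\alpha$. For any surviving spoke $\{P_i,Q_i\}\in\mathcal{R}(\mu_j)$: neither dual curve meets $\eta$ by the hypothesis on $\mathcal{R}(\mu_j)$; neither meets $\theta$ since no dual curve intersects both $\psi\mu_0\dots\mu_k$ and $\theta$; and neither can return to $\psi$ or to some other $\mu_{j'}$, because $\psi\mu_0\dots\mu_k$ is reduced and a dual curve meets a reduced path at most once. Reading the boundary path $\psi\mu_0\dots\mu_k\theta\alpha\eta\beta$, each of $P_i,Q_i$ must therefore exit through $\alpha$ or $\beta$. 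At most $|\beta|\le M$ dual curves cross $\beta$, so at most $2M$ spokes contain a dual curve meeting $\beta$; discarding these forces both $P_i$ and $Q_i$ to exit through $\alpha$ for every remaining spoke.

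The resulting subsequence is tight, retains the common type $\{s,t\}$, is structured with respect to $(\mu_0\dots\mu_k,\alpha)$, and each of its spokes lies in some $\mathcal{R}(\mu_j)$. Its size is at least $N - kM - (k+1)M - 2M = N - (2k+3)M \ge N - 3(k+1)M$, yielding the stated lower bound. The main obstacle is the straddling count in the first pruning: one must convert the tightness span-bound of $|V(\Gamma)|\le M$ edges into the per-boundary count, which relies on the key observation that distinct dual curves of the structured sequence are dual to distinct edges of the reduced path $\mu_0\dots\mu_k$.
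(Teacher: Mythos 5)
Your overall strategy is the same as the paper's (apply Lemma \ref{lem:nice_sequence}, then prune for straddlers, for membership in the $\mathcal{R}(\mu_j)$, and for exiting through $\alpha$), and your handling of the first two prunings is fine --- your straddler bound of $kM$ per tightness is coarser than the paper's bound of $k$ (obtained from the ordering of disjoint structured curves), but the slack in $3(k+1)M$ absorbs it. However, there is a genuine gap in the third pruning. You assert that for a spoke in $\mathcal{R}(\mu_j)$ ``neither dual curve meets $\eta$ by the hypothesis.'' Under the paper's definition, a path $\eta$ intersects a spoke $\{H,K\}$ only if \emph{both} $H\cap\eta\neq\emptyset$ and $K\cap\eta\neq\emptyset$; so the hypothesis that no spoke of $\mathcal{R}(\mu_j)$ intersects $\eta$ only rules out both dual curves meeting $\eta$. (This weaker reading is also the one that holds where the lemma is applied: being $L$--fence separated from $\eta$ forbids the spoke, not its individual curves, from reaching $\eta$.) Consequently a spoke surviving all three of your prunings could still have $P_i$ exiting through $\alpha$ and $Q_i$ exiting through $\eta$, and your final sequence would then fail to be structured with respect to $(\mu_0\dots\mu_k,\alpha)$.

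The repair is the paper's ordering observation, which you already use implicitly elsewhere: the dual curves $P_1,Q_1,\dots,P_N,Q_N$ are pairwise disjoint arcs entering in order along $\mu_0\dots\mu_k$, so their exit points occur in reverse order along $\alpha\eta\beta$. Hence, apart from at most $|\beta|$ spokes touching $\beta$ and at most one spoke split between $\alpha$ and $\eta$ (and one split between $\eta$ and $\beta$), every spoke has both curves exiting through $\alpha$ or both exiting through $\eta$; the latter case is exactly what the hypothesis on $\mathcal{R}(\mu_j)$ excludes. Discarding the extra split spoke costs $+1$ in your count, which still fits under $3(k+1)M$ (note also that your ``$2M$'' for the $\beta$--discard should really be $|\beta|\le M$, since each discarded spoke contributes at least one of the at most $|\beta|$ curves dual to $\beta$).
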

\begin{proof}
	By Lemma \ref{lem:nice_sequence}, there is a tight sequence of spokes $\{P_1, Q_1\}, \dots, \{P_r, Q_r\}$
	 structured with respect to $\mu_0 \dots \mu_k$ with $r \ge \frac{1}{M^2} \big \lfloor \frac{|\mu_0 \dots \mu_k|}{M} \big \rfloor$.

	As the dual curves  $P_1, Q_1, \dots, P_r, Q_r$  are pairwise non-intersecting (they are structured), for all but possibly $k$ values of $i \in \{1, \dots, r\}$ there exists some $0 \le j \le k$ such that $\{P_i,Q_i\}$ intersects~$\mu_j$.
	Each of the dual curves $P_1, Q_1, \dots, P_r, Q_r$ intersects $\alpha \eta \beta$ as none of these dual curves are dual to two edges of $\psi \mu_0 \dots \mu_k$ (as it is reduced) and none  intersect $\theta$ by hypothesis.
	Thus, for all but possibly $|\beta| + 1$ values of $i \in \{1, \dots, r\}$ we have that $\{H_i, K_i\}$ intersects either $\alpha$ or $\eta$.
	We conclude that there exists a subsequence $\{P_{i_1}, Q_{i_1}\}, \dots, \{P_{i_{r'}}, Q_{i_{r'}} \}$ of length $r' \ge r - k - |\beta| - 1$ such that, for each $1 \le l \le r'$, $\{P_{i_l}, Q_{i_l} \}$ intersects $\mu_j$ for some distinct $0 \le j \le k$ and $\{P_{i_l}, Q_{i_l} \}$ intersects either $\alpha$ or $\eta$.
	
	For each $0 \le j \le k$, let $A_j \subset \{\{P_{i_1}, Q_{i_1}\}, \dots, \{P_{i_{r'}}, Q_{i_{r'}}\}\}$ be the subset of the spokes which intersect $\mu_j$.
	As $\mathcal{R}(\mu_j)$ is $M$--adequate,
	there exists a subset $A_j' \subset A_j$ of size at least $|A_j| - M$ of spokes in $\mathcal{R}(\mu_j)$.

	Note that $\{\{P_{i_1}, Q_{i_1}\}, \dots, \{P_{i_{r'}}, Q_{i_{r'}} \} \} = A_0 \cup \dots \cup A_{k}$ and that $|A_0' \cup   \dots \cup A_{r'}'| \ge  |A_0 \cup \dots \cup A_{k}| - (k+1)M$.
	It follows that there exists a subsequence $\{H_1, K_1\}, \dots, \{H_n, K_n\}$ of $\{P_{i_1}, Q_{i_1}\}, \dots, \{P_{i_{r'}}, Q_{i_{r'}} \}$ of length $n \ge r' - (k+1)M$ 
	which is structured with respect to $(\mu_0 \dots \mu_k, \alpha)$ and satisfies (2) above. 	
	We also get the bound:
	\[n \ge r' - (k+1)M \ge r - k - |\beta| - 1 - (k+1)M \ge  \frac{1}{M^2} \Big \lfloor \frac{|\mu_0 \dots \mu_k|}{M} \Big \rfloor - 3(k+1)M \]
\end{proof}

\section{Disk diagram surgery} \label{sec:surgery}
\emph{In this section we discuss disk diagram surgery, an operation which allows us to ``insert'' a path into a disk diagram in place of another. We prove two lemmas which will allow us to insert well-behaved paths into a disk diagram.}
	
\vspace{.4cm}	

\begin{definition}[Disk diagram surgery] \label{def:surgery}
	Let $D$ be a disk diagram, and let $\gamma$ be a simple path in $D$ with label $w$. Let $w'$ be a reduced expression for $w$. 
	Let $D'$ be a disk diagram with boundary path $\gamma' \eta^{-1}$ such that the labels of $\gamma'$ and $\eta$ are $w'$ and $w$ respectively. 
	Let $D''$ be the disk diagram consisting of a copy of $D'$ and another reflected copy of $D'$ glued together along the path $\gamma'$
	(see Figure \ref{fig:surgery}). Note that the boundary path of $D''$ has label $w w^{-1}$.
	
	We first slightly thicken $\gamma$ in $D$ and then cut along this path to produce an annular diagram $A$, one of whose boundary paths has label $ww^{-1}$ and the other has label the same as that of a boundary path of $D$. 
	We then attach $D''$ along its boundary to the boundary of $A$ with label $ww^{-1}$.
	Let $E$ be this resulting disk diagram.
	We can naturally think of $\gamma'$ as a path in $E$ with label $w'$.
	We say that the resulting diagram is \textit{obtained from $D$ by surgery to insert $\gamma'$ in place of $\gamma$} and that \textit{$E$ is obtained from $D$ by surgery}. 
	Note that $E$ contains two (possibly equal) paths, labeled by $w$, which naturally correspond to $\gamma$ along the boundary of the inserted disk $D''$. We say that these paths are \textit{copies of $\gamma$} in $E$. By a slight abuse of notation, we will often refer to $\gamma$ when we mean a copy of $\gamma$.
\end{definition}

\begin{figure}[htbp]
	\begin{overpic}[scale=.3]{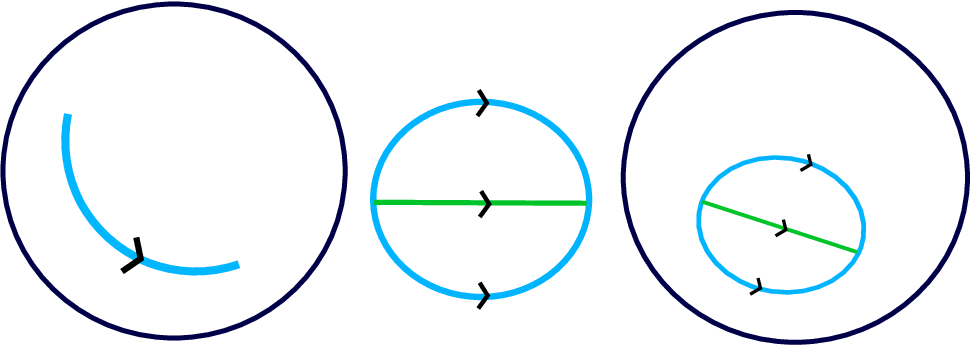}
		\put(11,12){$w$}	
		
		\put(47,27){$w$}
		\put(47,16){$w'$}
		\put(47,0){$w$}
		
		\put(13,-8){$D$}
		\put(47,-8){$D''$}
		\put(80,-8){$E$}
	\end{overpic}
	\caption{Disk diagram surgery.}
	\label{fig:surgery}
\end{figure}

Fix the notation from the previous definition.
There is a natural map $\Psi: E \to D$ collapsing the disk which was inserted into $D$.
We need to take great care when performing surgeries.
For instance, given a dual curve $H$ in $E$, it could be that $\Psi(H)$ is contained in two distinct dual curves of $D$ (see Figure~\ref{fig:nongon}). 
Moreover, 
bigons and nongons (as described in \cite{Wise-riches}) can be introduced after surgery, even if they are not present in $D$ or $D''$. For instance, see Figures~\ref{fig:nongon} and \ref{fig:tracking_lemma} for an example where surgery creates respectively a nongon and a bigon.

\begin{figure}[htbp]
	\begin{overpic}[scale=.3]{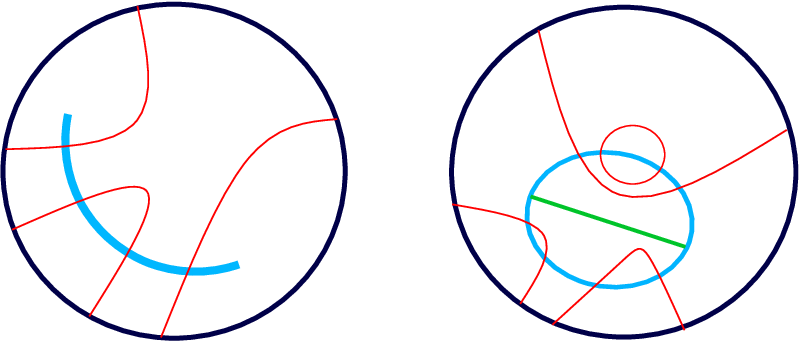}
	\end{overpic}
	\caption{The diagram on the right is obtained by surgery from the diagram on the left. The red curves on the right map to the ones on the left via $\Psi$.}
	\label{fig:nongon}
\end{figure}
However, some things can be seen to be preserved by surgery.
For instance, the boundary path of $E$ is canonically identified with a boundary path of $D$ (even if $\gamma$ contains edges of~$\partial D$).
Moreover, 
given a subdiagram $G \subset D$ whose interior does not intersect $\gamma$, it follows that $G$ is naturally a subdiagram of $E$.  

The next convention will from now on be used as a book-keeping device for paths which track other paths (as in the definition below).
\begin{convention} \label{conv:hat}
As a convention, we will always use hat notation as follows: the word $\hat{w}$ will always be understood be equal to a word $w$ with some letters deleted. Similarly, the path $\hat{\gamma}$ will always be understood to track the path $\gamma$ (as defined below).
\end{convention}

\begin{definition}[Track]
	Let $\gamma$ and $\gamma'$ be oriented paths in a disk diagram. 
	We say that $\gamma'$ \textit{tracks} $\gamma$ if $\gamma'$ is reduced and the following holds.
	Let $H_1, \dots, H_n$ be the set of all dual curves which intersect $\gamma'$ ordered by the orientation of $\gamma'$, i.e. $H_i \cap \gamma'$ occurs prior to $H_{i+1} \cap \gamma'$ along the orientation of $\gamma'$ for all $1 \le i < n$. 
	Then $H_1, \dots, H_n$ each intersect $\gamma$ and are ordered along the orientation of $\gamma$. In particular, if $w$ is the label of $\gamma$, then the label of $\gamma'$ is a reduced word $\hat{w}$ (as in Convention~\ref{conv:hat}). 
\end{definition}

	By definition, if $\hat{\gamma}$ tracks $\gamma$ and $\hat{\hat{\gamma}}$ tracks $\hat{\gamma}$, then $\hat{\hat{\gamma}}$ tracks $\gamma$.
	Additionally, if $\hat{\gamma}$ tracks $\gamma$, then a sequence of dual curves (resp. spokes) structured with respect to $\hat{\gamma}$ is also structured with respect to $\gamma$.
	These observations will be frequently used without mention.

The next two lemmas guarantee we can use disk diagram surgery to insert paths with certain desirable properties into a disk diagram.

\begin{lemma} \label{lem:tracking paths}
	Let $\gamma$ be a simple path with label $w$ in a disk diagram $D$. We can obtain a disk diagram $E$ by applying surgery to $D$ to insert a path $\hat{\gamma}$ in place of $\gamma$, such that $\hat{\gamma}$ tracks each copy of $\gamma$ in $E$ and has label a reduced expression $\hat{w}$ for $w$.
\end{lemma}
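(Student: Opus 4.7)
My plan is to reduce the lemma to the following construction: for any word $w$ in $W_\Gamma$, build a disk diagram $D_w$ with boundary path $\gamma' \eta^{-1}$, where $\eta$ has label $w$, $\gamma'$ has label some reduced expression $\hat{w}$ for $w$, and $\gamma'$ tracks $\eta$ in $D_w$. Given such a $D_w$, the lemma will follow immediately by performing the surgery of Definition~\ref{def:surgery} with $D' := D_w$: in the resulting $E$, the two copies of $\gamma$ on the boundary of the inserted $D''$ are each canonically identified with a copy of $\eta$, so tracking in $D_w$ transfers to tracking of each copy of $\gamma$ by $\hat{\gamma}$.

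I would construct $D_w$ by induction on $|w|$. If $w$ is already reduced, I take $D_w$ to be just the edge path labeled by $w$ with no $2$-cells, whence $\gamma' = \eta$ and tracking is trivial. Otherwise, since the defining relators of $W_\Gamma$ consist only of generator squares and commutations, Tits' deletion condition yields indices $i < j$ with $w_i = w_j = s$ for some generator $s$ commuting in $\Gamma$ with every letter of $u := w_{i+1} \cdots w_{j-1}$. I then build a ``slab'' subdiagram $S$ realizing the reduction $sus \to u$: $S$ is a row of $j - i - 1$ commutation squares (one for each letter of $u$, each with $s$-edges on its left and right sides) together with a single bigon attached at the left end. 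The slab has top boundary labeled $sus$ and bottom boundary labeled $u$; each letter of $u$ supports a dual curve running vertically from its top edge to its bottom edge, while the two $s$-edges on top are joined by a single $s$-dual curve that threads through the bigon and horizontally across all the squares, never meeting the bottom of $S$.

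Setting $w' := w_1 \cdots w_{i-1}\, w_{i+1} \cdots w_{j-1}\, w_{j+1} \cdots w_n$, which equals $w$ in $W_\Gamma$, any reduced expression for $w'$ is also one for $w$. Applying the inductive hypothesis to $w'$ (of length $|w|-2$) gives a diagram $D_0$ with boundary $\gamma' \tau^{-1}$, where $\tau$ has label $w'$, $\gamma'$ has label some reduced expression $\hat{w}$ for $w$, and $\gamma'$ tracks $\tau$. I then glue $S$ onto the outside of $D_0$ along the subpath of $\tau$ labeled by $u$, and the resulting $D_w$ has outer boundary labeled $w$. Tracking of $\eta$ by $\gamma'$ in $D_w$ follows by concatenation: each dual curve of $D_w$ meeting $\gamma'$ restricts to a dual curve of $D_0$ meeting $\tau$ at an edge of matching type, which either lies on the unglued part of $\tau$ (hence directly on $\eta$) or lies along $u$, in which case the curve continues through the unique commutation square of $S$ of matching type to an edge of $\eta$ of the same label, preserving the induced order. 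The main subtlety will be to verify that distinct dual curves are not accidentally merged by the gluing: this holds because the $s$-dual curve created inside $S$ has both endpoints on the top of $S$ (so it neither enters $D_0$ nor meets $\gamma'$), and because the vertical dual curves of $S$ all have types in $\{w_{i+1}, \dots, w_{j-1}\}$, each different from $s$, preventing conflation across the glued edges.
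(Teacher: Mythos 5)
Your proposal is correct, but it reaches the key intermediate object by a genuinely different route. The paper does not construct the diagram $D'$ with boundary $\gamma'\eta^{-1}$ by hand: it cites \cite{Wise-hierarchy}[Lemma 2.6 and Corollary 2.7] to obtain a diagram in which no two dual curves emanating from distinct edges of the reduced side $\gamma'$ intersect, and then deduces tracking from the fact that a dual curve cannot meet a reduced path twice, so every dual curve leaving $\gamma'$ must exit through $\eta$ in order. You instead build $D_w$ explicitly by induction on $|w|$ via the deletion condition for RACGs, stacking one ``slab'' per deleted pair $s u s \to u$, and verify tracking directly by following each dual curve through the vertical midcubes of the slab. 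Your argument is more elementary and self-contained (it avoids the external citation entirely), at the cost of length and of some degenerate bookkeeping that a full write-up would need: the bigon at the left end of the slab is unnecessary (the two $s$-edges are simply the left and right sides of the row of squares, joined by the horizontal $s$-midline, and when $u$ is empty the slab degenerates to a folded spur); the phrase ``the unique commutation square of $S$ of matching type'' should be ``the square glued along that edge of $\tau$,'' since $u$ may repeat letters; and the non-conflation of dual curves is really forced by the fact that each vertical midcube of $S$ meets exactly one glued edge and the $s$-midline meets no edge identified with $D_0$, rather than by the types involved. None of these affects the validity of the approach, and the final surgery step matches the paper's exactly.
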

\begin{proof}
 	Let $D'$ be a disk diagram with boundary path $\eta (\gamma')^{-1}$ where $\eta$ and $\gamma'$ have labels $w$ and  $w'$ respectively, where $w'$ is a reduced expression for $w$. Additionally choose $D'$ to have minimal area out of all such possible diagrams.  
 	By \cite{Wise-hierarchy}[Lemma 2.6 and Corollary 2.7], $D'$ has the property that no two dual curves emanating from distinct edges of $\gamma'$ intersect. As no dual curve intersects a reduced path twice, every dual curve dual to $\gamma'$ intersects $\eta$ in $D'$. Consequently, $\gamma'$ tracks $\eta$ in $D'$. We apply surgery to $D$ as in Definition~\ref{def:surgery} to obtain the disk diagram $E$ by, as in this definition, inserting a disk diagram $D''$ which consists of two glued copies of $D'$. It follows that $\gamma'$ tracks the copies of $\gamma$ in $E$. Thus, we can set $\hat{\gamma} = \gamma'$ and $\hat{w} = w'$.
\end{proof}

\begin{figure}[htbp]
	\begin{overpic}[scale=.4]{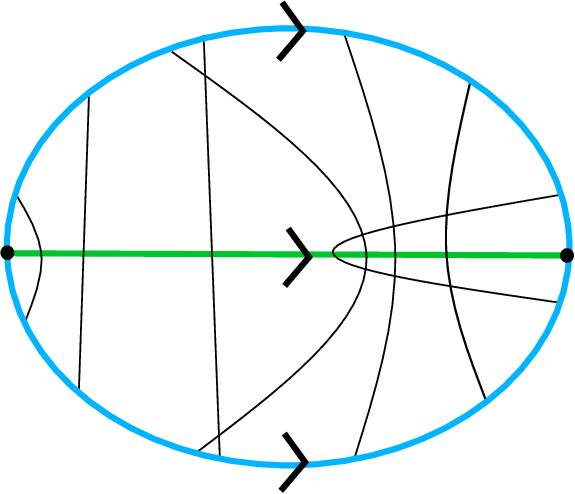}
		\put(20,78){\tiny$\gamma$}
		\put(20,7){\tiny$\gamma$}
		\put(20,35){\tiny$\gamma'$}
		\put(4,50){\tiny$H_1$}	
		\put(15,60){\tiny$H_2$}
		\put(36,50){\tiny$H_3$}
		\put(50,62){\tiny$H_5$}	
		\put(65,67){\tiny$H_6$}
		\put(80,60){\tiny$H_7$}
		\put(85,52.5){\tiny$H_4$}
	\end{overpic}
	\caption{Example of the subdiagram with boundary path $\gamma \gamma^{-1}$ of the disk diagram in the conclusion of Lemma~\ref{lem:tracking_paths2}. The black lines represent \textit{all} hyperplanes of this subdiagram. In this example $j = 4$.  }
		\label{fig:tracking_lemma}
\end{figure}

\begin{lemma} \label{lem:tracking_paths2}
	Let $D$ be a disk diagram over the RACG $W_\Gamma$.
	Let $\gamma$ be a path in $D$ with reduced label $w = s_1 \dots s_n$. 
	Then we can apply surgery to $D$ to insert a path $\gamma'$ in place of $\gamma$ with label $w' = s_1' \dots s_n'$ a reduced expression for $w$. Moreover, there exists a $1 \le j \le n$ such that $s_j' = s_n$ and the resulting diagram satisfies:
	\begin{enumerate}
		\item For all $j < i \le n$, $s_j'$ and $s_i'$ are adjacent vertices of $\Gamma$.
		\item The initial subpath of $\gamma'$ with label $s_1' \dots s_{j}'$ tracks each copy of $\gamma$.
		\item For each $1 \le i < j$, the dual curve dual to the edge of $\gamma'$ labeled by $s_i'$ does not intersect the dual curve dual to the edge of $\gamma'$ labeled by $s_j'$.
	\end{enumerate}
\end{lemma}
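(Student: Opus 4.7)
The plan is to refine the construction from the proof of Lemma \ref{lem:tracking paths} so that the dual curve through the last edge of $\gamma$ determines a distinguished index $j$, and then use planarity of the resulting disk diagram to deduce the adjacency and non-intersection conditions. As a starting point, applying \cite{Wise-hierarchy}[Lemma 2.6 and Corollary 2.7] to the path $\gamma$ produces a disk diagram $D'$ with boundary $\eta(\gamma')^{-1}$, where $\eta$ is labeled by $w$, $\gamma'$ is labeled by a reduced expression $w' = s_1' \dots s_n'$ for $w$, and no two dual curves emanating from distinct edges of $\gamma'$ intersect in $D'$. Exactly as in the proof of Lemma \ref{lem:tracking paths}, every dual curve dual to $\gamma'$ must also be dual to $\eta$ (since a reduced path admits no dual curve twice), and hence $\gamma'$ tracks $\eta$ in $D'$.

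I would then define $j$ as the position along $\gamma'$ of the edge sharing its dual curve with the last edge of $\eta$; if that dual curve loops back to an earlier $s_n$-edge of $\eta$, one iterates the analysis among the remaining $s_n$-edges of $\eta$ to locate a suitable such edge whose dual curve reaches $\gamma'$. By construction, the edge of $\gamma'$ at position $j$ is labeled by $s_n$, so $s_j' = s_n$. For $i > j$, the dual curve dual to the $i$-th edge of $\gamma'$ must cross $\eta$; the order-preserving behavior of non-crossing arcs between $\eta$ and $\gamma'$ in the planar disk $D'$, combined with the choice of $j$ as the rightmost available $s_n$-edge position, forces this dual curve to cross the dual curve $H$ through the last edge of $\eta$. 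Since $H$ has type $s_n$ and crossing dual curves have adjacent types in $\Gamma$, it follows that $s_i'$ is adjacent to $s_n$ in $\Gamma$.

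Performing the surgery of Definition \ref{def:surgery} using this $D'$ (doubled along $\gamma'$) then produces a disk diagram $E$ with an inserted copy of $\gamma'$. The initial subpath $s_1' \dots s_j'$ of $\gamma'$ tracks each copy of $\gamma$, inherited from the tracking of $\eta$ by $\gamma'$. The non-intersection of the dual curve at any position $k < j$ with the dual curve at position $j$ follows immediately from the defining property of $D'$ that dual curves emanating from distinct edges of $\gamma'$ do not intersect. The main obstacle will be the detailed planarity argument for the adjacency conclusion, particularly in the case where the dual curve through the last edge of $\eta$ loops back within $\eta$: in that case one must carefully analyze how dual curves at subsequent $\gamma'$-positions route through the diagram (either crossing the loop of $H$ or being bounded away from $\gamma'$) and verify that every such dual curve necessarily crosses a dual curve of type $s_n$, thereby ensuring the required commutation of its type with $s_n$.
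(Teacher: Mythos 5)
There is a genuine gap, and it sits precisely in the clause you dismiss as immediate. The conclusion that for $k<j$ the dual curve through the $k$-th edge of $\gamma'$ does not cross the dual curve through the $j$-th edge is a statement about the diagram $E$ obtained \emph{after} surgery: these dual curves continue out of the inserted diagram $D''$ into the original diagram $D$, and Wise's lemma only prevents them from crossing inside $D''$. In $D$ the dual curve through the last edge of $\gamma$ may well cross dual curves through earlier edges of $\gamma$, and those crossings persist in $E$; so ``no crossings among dual curves emanating from $\gamma'$ inside $D'$'' does not give the third conclusion. Moreover, your construction cannot produce a nontrivial $j$ in the first place: since $|w'|=|w|=n$ the word $w$ must be reduced, so both $\eta$ and $\gamma'$ are reduced of length $n$, every dual curve dual to $\gamma'$ meets $\eta$ exactly once, and the order-preserving (tracking) condition forces the induced bijection between the edges of $\gamma'$ and those of $\eta$ to be the identity. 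Hence the dual curve through the last edge of $\eta$ meets $\gamma'$ in its last edge, $j=n$, the adjacency clause is vacuous, and the picture you describe --- dual curves at positions $i>j$ of $\gamma'$ forced to cross $H$ --- is incompatible with the tracking you have just established. With $j=n$ the non-crossing clause is simply false in general, which is exactly why the lemma is nontrivial.

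The missing idea is the paper's choice of $j$: among \emph{all} expressions $w'=s_1'\dots s_n'$ for $w$ admitting an index $j$ with $s_j'=s_n$ and $s_i'$ adjacent to $s_j'$ for every $i>j$, one fixes a pair $(w',j)$ with $j$ minimal (such a pair exists, since $w'=w$, $j=n$ qualifies). The adjacency condition is thus built into the choice rather than extracted from planarity; Wise's results are applied only to the prefix $s_1'\dots s_{j-1}'$ to make that prefix track $\gamma$; and the non-crossing conclusion is proved in $E$ by contradiction from minimality: if the dual curve at some position $k<j$ crossed the one at position $j$, taking $k$ maximal forces each intermediate dual curve to cross the $k$-th one, so $s_k'$ commutes past $s_{k+1}'\dots s_j'$ and can be moved to the other side of $s_j'$, producing an expression with strictly smaller $j$. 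Without some device of this kind that pulls the letter $s_n$ as far back in the word as possible, the third conclusion cannot be obtained.
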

\begin{proof}
	Let $w' = s_1' \dots s_n'$ be an expression for $w$ and $1 \le j \le n$ be such that $s_j' = s_n$ and, for all $j < i \le n$, $s_j'$ and $s_i'$ are adjacent vertices of $\Gamma$. 
	We additionally choose $w'$ and $j$ so that $j$ is minimal out of all such possible choices.
	Note that such an expression exists as we can take $w' = w$ and $j = n$ (where $j$ is not necessarily minimal).

	Set $h_1 = s_1' \dots s_{j-1}'$ and $h_2 = s_{j+1}' \dots s_n'$ and note that $w' = h_1 s_j' h_2$. 
	By \cite{Wise-hierarchy}[Lemma~2.6 and Corollary~2.7], it follows that, by 
	possibly replacing $h_1$ in $w'$ with another expression for $h_1$, 
	there is a disk diagram $D'$ with boundary path labeled $w w'^{-1}$
	such that there are no intersection between pairs of dual curves dual to the subpath of the boundary path of $D'$ corresponding to $h_1$.
	In particular, as dual curves intersect a reduced path at most once, the subpath of the boundary path of $D'$ labeled by $s_1' \dots s_{j}'$ tracks the subpath labeled by $w$.
	
	We now apply surgery to $D$ to insert two glued copies of $D'$ into $D$ as in Definition \ref{def:surgery}. 
	Let $E$ be the resulting disk diagram, and let
	 $\gamma'$ be the corresponding path labeled by $w'$ in $E$.
	We claim that, for each $1 \le k < j$,  the dual curve dual to the edge of $\gamma'$ labeled by $s_k'$ does not intersect the dual curve dual to the edge labeled by $s_j'$. 
	For suppose, otherwise, and take $k$ maximal with this property.
	It then follows by the commuting relations imposed by intersections of dual curves in $E$ that $s_1' \dots s_{k-1}' s_{k+1}' \dots s_j' s_k' s_{j+1}' \dots s_n'$ is an expression for $w$ and that $s_k'$ is adjacent to $s_j'$ in $\Gamma$, contradicting our choice of $j$ being minimal. 
	The lemma now follows.
\end{proof}

\section{Divergence Bounds} \label{sec:div_bounds}

\emph{The main result of this section is Theorem~\ref{thm:main} which gives a lower bound on the length of certain paths avoiding an $R$--ball in the Davis complex. We wish to prove this theorem by induction; however, for this argument to work, we first need to  prove the technical Propositions \ref{prop:main_full} and \ref{prop:main_pieces}.}

\begin{figure}[htp]
	\centering
	\begin{overpic}[scale=.4]{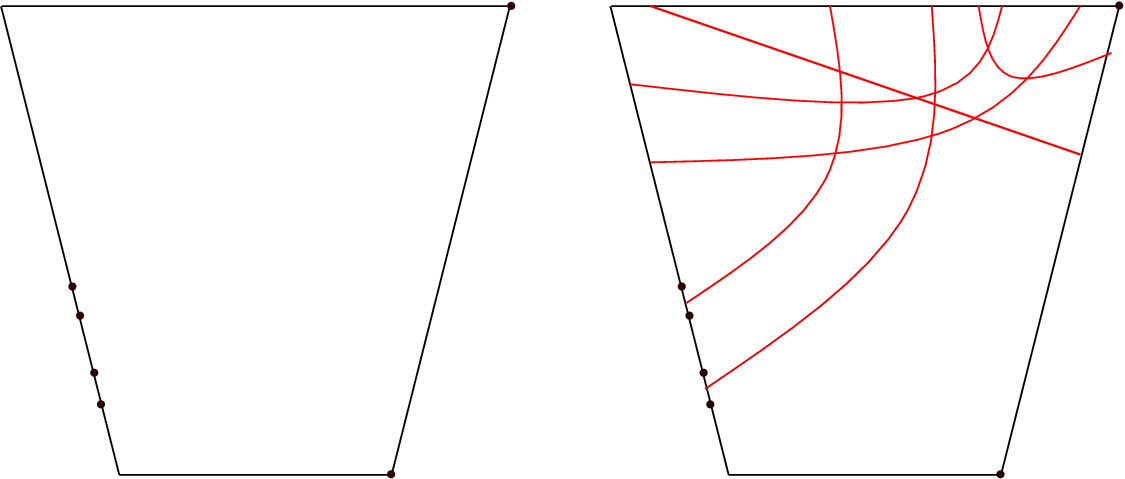}
		\put(7,7){\Tiny $e$}
		\put(5,14.5){\Tiny$f$}
		\put(40,18){\Tiny$\eta$}
		\put(20,-4){\Tiny $D$}
		
		\put(61,7){\Tiny $e$}
		\put(59,14.5){\Tiny$f$}
		\put(95,18){\Tiny$\eta$}
		
		\put(67,14){\Tiny$H_1$}
		\put(66,22){\Tiny$K_1$}
		
		\put(65,29){\Tiny$H_2$}
		\put(62,35){\Tiny$K_2$}
		
		\put(90,28){\Tiny$H_3$}
		\put(93,34){\Tiny$K_3$}
	\end{overpic}
	\caption{Suppose that the diagram on the right is obtained from $D$ via a sequence of surgeries and that $\{\{H_1, K_1\}, \{H_2,K_2\}, \{H_3,K_3\}\}$ is a $1$--fence. Then the edge-pair $\{e,f\}$ is \emph{not} $1$--fence separated from $\eta$ with respect to $D$.}
	\label{fig:separated}
\end{figure}
We establish some terminology.
Given a disk diagram $D$ with boundary path $\gamma$ and a subpath $\alpha \subset \gamma$, there is a closed path $\gamma'$ in the $1$--skeleton of the Davis complex $\Sigma_\Gamma$ (i.e., the Cayley graph of $W_{\Gamma}$ with standard generators) based at the vertex representing the identity and with the same label as that of $\gamma$. Furthermore, there is a subpath $\alpha' \subset \gamma'$ naturally corresponding to $\alpha$. We say that $\alpha$ is \textit{$R$--avoidant with respect to $D$} if the corresponding path $\alpha'$ in $\Sigma_\Gamma$ does not intersect the ball of radius $R$ about the vertex representing the identity element.

We would like to define a spoke to be ``$L$--fence separated'' from a path, if it is not contained in any $L$--fence that intersects this path. However, we need this property to also be preserved under disk diagram surgeries. This motivates the following definition (see also Figure~\ref{fig:separated}).

Let $D$ be a disk diagram with boundary path $\gamma$, let $\eta$ be a subpath of $\gamma$, and let $\{e_1, e_2\}$ be an edge-pair in $\gamma$.
We say that $\{e_1, e_2\}$ is \textit{$L$--fence separated from $\eta$ with respect to $D$} if given any disk diagram $E$  obtained from $D$ by a series of surgeries and a spoke $\{H, K\}$ in $E$ dual to $\{e_1, e_2\}$, then $\{H,K\}$ is not contained in an $L$--fence which intersects $\eta$.

We are now ready to state the main theorem of this section:

\begin{theorem} \label{thm:main}
	Let $D$ be a disk diagram with boundary path $\gamma \alpha \eta \beta$ over the RACG $W_\Gamma$ such that:
	\begin{enumerate}
		\item $\gamma$ is reduced,
		\item every edge-pair in $\gamma$ is $L$--fence separated from $\eta$, and
		\item $\alpha$ is $R$--avoidant with respect to $D$ where $R$ is larger than some fixed universal constant.
	\end{enumerate}
	Then $|\alpha| \ge C R^{L+1}$ where the constant $C$ depends only on $L$ and $\Gamma$.
\end{theorem}

The two technical propositions below are required to prove the above theorem. In these propositions, we utilize the function $f_L^M(R) := \frac{R}{M^{100L + 50}}$ which we denote by $f_L(R)$ when $M$ is implicit. Before turning to their proofs, we first show that Proposition~\ref{prop:main_full} implies Theorem~\ref{thm:main}:
\begin{proof}[Proof of Theorem~\ref{thm:main}:]
	Let $D$ be a diagram with boundary path $\gamma \alpha \eta \beta$ as in the statement of Theorem~\ref{thm:main}. Define $\gamma_1$ to be the starting point of $\gamma$ and $\gamma_3$ to be the endpoint of $\gamma$ (i.e., $\gamma_1$ and $\gamma_3$ are length $0$ paths). Clearly the hypotheses of Proposition~\ref{prop:main_full} hold with these choices. The theorem now follows from conclusion (B4) of Proposition~\ref{prop:main_full}.
\end{proof}

\begin{figure}[h]
	\centering
	\begin{subfigure}{.49\textwidth}
		\centering
	\begin{overpic}[scale=.41]{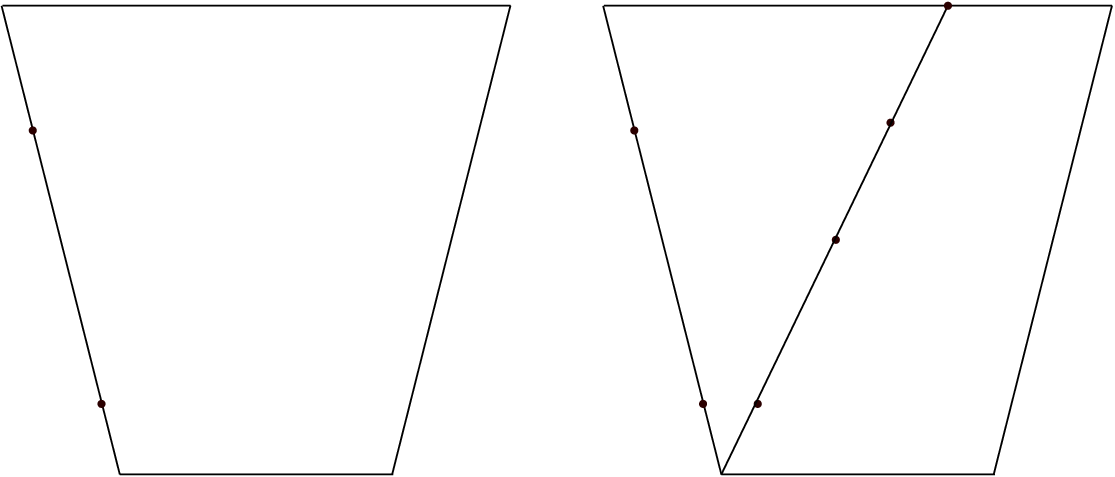}
		\put(20,-5){\Tiny $D$}	
		\put(6,2){\Tiny $\gamma_1$}	
		\put(2,19){\Tiny $\gamma_2$}	
		\put(-2,35){\Tiny $\gamma_3$}	
		\put(20,43){\Tiny $\alpha$}
		\put(40,19){\Tiny $\eta$}
		\put(21,1){\Tiny $\beta$}
		
		\put(48,15){\Tiny $\to$}
		
		\put(75,-5){\Tiny $E$}		
		\put(65,25){\Tiny $E'$}	
		\put(60,2){\Tiny $\gamma_1$}	
		\put(56,19){\Tiny $\gamma_2$}	
		\put(52,35){\Tiny $\gamma_3$}	
		\put(66,2){\Tiny $\gamma_1'$}	
		\put(71,12){\Tiny $\gamma_2'$}	
		\put(77.5,25){\Tiny $\zeta$}	
		\put(82,35){\Tiny $\pi$}		
		\put(68,43){\Tiny $\alpha'$}
		\put(94,19){\Tiny $\eta$}
		\put(75,1){\Tiny $\beta$}

	\end{overpic}
	\vspace*{5mm} 
	\caption{Disk diagrams of Proposition~\ref{prop:main_full}.}
	\end{subfigure}
	\begin{subfigure}{.49\textwidth}
		\centering
	\begin{overpic}[scale=.41]{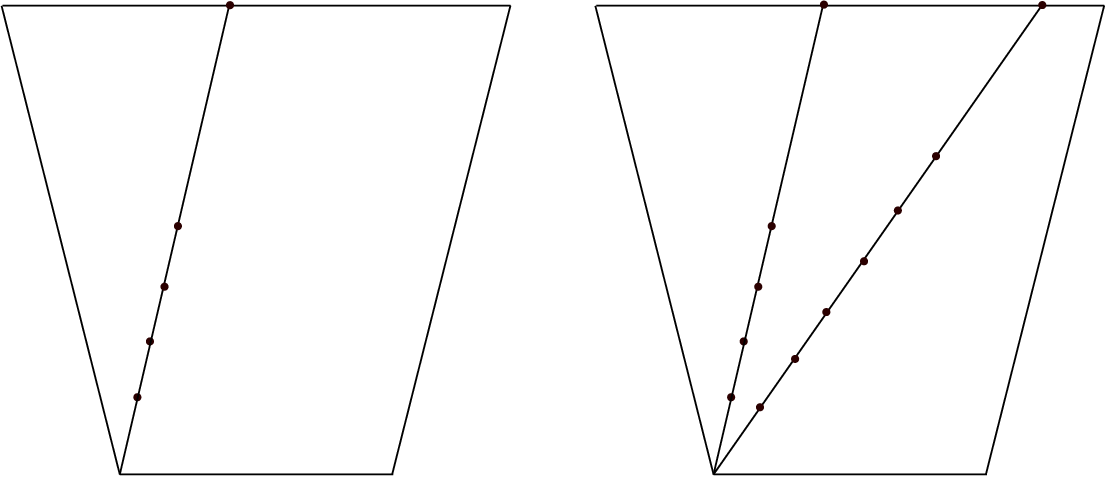}
		\put(8,33){\Tiny $D_1$}	
		\put(30,33){\Tiny $D_2$}	
		\put(20,-5){\Tiny $D$}	
		\put(2,22){\Tiny $\gamma$}	
		\put(16,31){\Tiny $\theta$}
		\put(11.5,20){\Tiny $\mu_2$}		
		\put(10.5,15){\Tiny $\mu_1$}		
		\put(9.5,10){\Tiny $\mu_0$}		
		\put(10,5){\Tiny $\psi$}
		\put(10,43){\tiny $\kappa_1$}
		\put(30,43){\tiny $\kappa_2$}
		\put(41,19){\Tiny $\eta$}
		\put(21,1){\Tiny $\beta$}

		\put(48,15){\Tiny $\to$}

		\put(62,33){\Tiny $D_1$}	
		\put(75,-5){\Tiny $E$}	
		\put(56,22){\Tiny $\gamma$}	
		\put(70,31){\Tiny $\theta$}
		\put(65,20){\Tiny $\mu_2$}		
		\put(64.5,15){\Tiny $\mu_1$}		
		\put(63.2,10){\Tiny $\mu_0$}		
		\put(63.5,5){\Tiny $\psi$}			

		\put(89,33){\Tiny $\theta'$}
		\put(83,25.5){\Tiny $\mu_4'$}		
		\put(80,21){\Tiny $\mu_3'$}		
		\put(76.5,16){\Tiny $\mu_2'$}		
		\put(73.5,12){\Tiny $\mu_1'$}		
		\put(70,7){\Tiny $\mu_0'$}		
		\put(67,2){\Tiny $\psi'$}	
		\put(94,19){\Tiny $\eta$}
		\put(75,1){\Tiny $\beta$}
		\put(82,43){\tiny $\alpha'$}
	\end{overpic}
	\vspace*{5mm} 
	\caption{Disk diagrams of Proposition~\ref{prop:main_pieces} for $n=2$.}
	\end{subfigure}
	\caption{Disk diagrams in the statements of Propositions~\ref{prop:main_full} and \ref{prop:main_pieces}.}
\end{figure}

\begin{proposition} \label{prop:main_full}
	Let $D$ be a disk diagram with simple boundary path $\gamma_1 \gamma_2 \gamma_3 \alpha \eta \beta$ and basepoint $b$. Let $M > \max \{|V(\Gamma)| + 1, |\beta| \}$, $L 
	\ge 0$ and $R >0$ be integers. Furthermore, suppose that:
	
	\begin{enumerate}
		\item[(A1)]
			The path $\alpha$ is $R$--avoidant with respect to $D$.
		
		\item[(A2)]
			The path $\gamma_1\gamma_2$ is reduced, no dual curve is dual to both $\gamma_1 \gamma_2$ and $\gamma_3$,
			 $|\gamma_1| \le f_L(R)$ and $|\gamma_2| \ge f_L(R)$.
		
		\item[(A3)]  
			There is an $M$--adequate set $\mathcal{R}(\gamma_2)$ of edge-pairs which are $L$--fence separated from $\eta$ with respect to $D$.
			
	\end{enumerate}
	Then, for $R$ large enough (depending only on $M$ and $L$), we can apply a sequence of surgeries to $D$ to obtain a disk diagram $E$ containing a path $\gamma_1' \gamma_2' \zeta \pi$ from $b$ to $\alpha$. Let $E' \subset E$ be the subdiagram with boundary path
	 $\gamma_1 \gamma_2 \gamma_3 \alpha' (\gamma_1' \gamma_2' \zeta \pi)^{-1}$ where $\alpha'$ is the subpath of $\alpha$  between the endpoint of $\gamma_3$ and the endpoint of $\pi$. We additionally have that:
	\begin{enumerate}
		\item[(B1)]
			The path $\gamma_1' \gamma_2' \zeta$ is reduced, no dual curve is dual to both $\gamma_1'\gamma_2'\zeta$ 
			 and $\pi$,  $|\gamma_2' \zeta| \ge 32f_{L+1}(R)$,  $\gamma_1'$ tracks $\gamma_1$ and $\gamma_2'$ tracks $\gamma_2$.
		\item[(B2)]
			Let $\mathcal{R}(\zeta)$ be the set of all edge-pairs $\{e_1, e_2\}$ in $\zeta$ satisfying the following:  
			 there exists an $(L+1)$--fence in $E'$ containing a spoke dual to $\{e_1, e_2\}$ and a spoke of  $\mathcal{R}(\gamma_2)$.
			Then $\mathcal{R}(\zeta)$ is $M$--adequate.
		\item[(B3)]
			Every dual curve which intersects $\zeta$ either intersects $\gamma_2$ or intersects an $L$--fence in $E'$ that contains a spoke in $\mathcal R (\gamma_2)$.
		
		\item[(B4)] 
			The path $\alpha'$ has length at least $C_L R^{L+1}$ where $C_L$ depends only on $L$ and $M$.
	\end{enumerate}
\end{proposition}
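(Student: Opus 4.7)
The plan is to argue by induction on $L$, proved simultaneously with Proposition~\ref{prop:main_pieces}. In both the base case and the inductive step, the heart of the argument is to locate many $L$-fence separated spokes along $\gamma_2$ and to surgically ``promote'' their $L$-fence structure to an $(L+1)$-fence structure along a newly inserted extension $\zeta$. The scaling $f_{L+1}(R)/f_L(R) = M^{-100}$ is deliberately loose enough that all of the pigeonhole losses incurred below fit comfortably inside it, provided $R$ is chosen large enough in terms of $L$ and $M$.

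First I would apply Lemma~\ref{lem:enough_curves} to $\gamma_2$ with the $M$-adequate set $\mathcal{R}(\gamma_2)$ to extract a tight sequence of spokes $\mathcal{S}_1, \ldots, \mathcal{S}_n \in \mathcal{R}(\gamma_2)$ structured with respect to $(\gamma_2, \alpha)$, with $n$ on the order of $|\gamma_2|/M^3 \gtrsim f_L(R)/M^3$. For each $\mathcal{S}_i = \{H_i, K_i\}$, the $L$-fence separated hypothesis says that no $L$-fence containing $\mathcal{S}_i$ can reach $\eta$ even after further surgeries; combined with (A2) and $|\beta| \le M$, Proposition~\ref{prop:intersects_a_spoke} forces both $H_i$ and $K_i$ to exit through $\alpha$ for all but $O(M)$ indices. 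Then, feeding each $\mathcal{S}_i$ (and the maximal $L$-fence $\mathcal{F}_i$ containing it) into the companion Proposition~\ref{prop:main_pieces} at the same level~$L$ produces a ``fence extension'' path that continues past $\gamma_2$ along $\mathcal{F}_i$, together with the $C_L$ bound on how much of $\alpha$ each such extension must consume. Iterating disk diagram surgeries via Lemma~\ref{lem:tracking paths} and Lemma~\ref{lem:tracking_paths2} at these spokes, with the simple path being the basepoint-to-$\alpha$ route through $\gamma_1 \gamma_2$ and a chosen extension, replaces $\gamma_1 \gamma_2$ by a reduced expression whose initial segment tracks $\gamma_1 \gamma_2$ (this is the $\gamma_1' \gamma_2'$) and whose terminal letters, deposited by the structural guarantees of Lemma~\ref{lem:tracking_paths2}, form the next piece of $\zeta$. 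Reducedness of $\gamma_1'\gamma_2'\zeta$ and the absence of dual curves dual to both $\gamma_1'\gamma_2'\zeta$ and $\pi$ are built directly into Lemma~\ref{lem:tracking_paths2}. For (B2) and (B3), each new spoke appearing on $\zeta$ lies in a fence structure that, by Lemma~\ref{lem:combining_fences}, merges with the corresponding $\mathcal{F}_i$ into an $(L+1)$-fence whose spoke set contains a spoke of $\mathcal{R}(\gamma_2)$; the pigeonhole count that produced the $\mathcal{S}_i$ in the first place transfers to give $M$-adequacy of the resulting set. The length bound $|\gamma_2'\zeta| \ge 32 f_{L+1}(R)$ follows after absorbing the $O(M^3)$ and $O(kM)$ losses into $M^{100}$, and (B4) follows by summing the $C_L R^L$ contributions of the $n$ extensions along the $R$-avoidant arc $\alpha$.

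The principal obstacle will be controlling the side-effects of surgery. Each insertion can introduce bigons and nongons that might cause two previously distinct dual curves to share midcubes, spuriously connect fences, or even create apparent $L$-fences linking $\mathcal{R}(\gamma_2)$ to $\eta$ in the new diagram. This is exactly why the definition of $L$-fence separated is phrased to survive arbitrary later surgeries, and why the tracking/hat convention is set up the way it is: rather than attempting to reduce the pathologies away (which would destroy fence structures already found), the plan is to let the bigons be bigons and verify all fence-connectivity claims by lifting back along the natural collapsing map $E \to D$, where $L$-fence separation is an invariant by hypothesis. Arranging the sequence of surgeries so that later ones do not break the tracking and fence-separation of spokes introduced earlier, and so that properties (B1)--(B4) hold \emph{simultaneously} at the end, is the delicate combinatorial core of the proof and the reason the two propositions must be proved together.
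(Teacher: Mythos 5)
Your high-level framing (simultaneous induction with Proposition~\ref{prop:main_pieces}, surgery via Lemmas~\ref{lem:tracking paths} and~\ref{lem:tracking_paths2}, fence-merging via Lemma~\ref{lem:combining_fences}, tolerating bigons, and absorbing pigeonhole losses into the $M^{100}$ slack) matches the paper, but the mechanism you give for (B4) has a genuine gap. You propose to feed \emph{each} spoke $\mathcal{S}_i$ on $\gamma_2$ separately into Proposition~\ref{prop:main_pieces} at level $L$ and then sum $n\approx f_L(R)/M^3$ contributions of size $C_L R^L$. But Proposition~\ref{prop:main_pieces} cannot be applied ``per spoke'': its hypotheses require an entire decomposed path $\psi\mu_0\dots\mu_n\theta$ of length at least $f_L(R)$ together with an $L$--fence $\mathcal{F}$ satisfying the adequacy conditions (X3)--(X5), and its conclusion (Y6) bounds only the subpath of $\alpha$ between the endpoints of $\theta$ and $\theta'$. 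Applying it to $\Theta(R)$ different spokes gives you neither a way to verify these hypotheses for each spoke nor any disjointness of the resulting subpaths of $\alpha$ (each application also performs surgeries that alter the diagram the others live in). The paper instead applies Proposition~\ref{prop:main_pieces} \emph{sequentially}: the output path $\psi'\mu_0'\dots\mu_{n+2}'\theta'$ of one application becomes the input of the next (Lemma~\ref{lem:iterate}), the conclusions (Y1)--(Y5) are exactly the invariants needed to re-apply the proposition, and the $\alpha$--increments are disjoint by construction since $\theta'$ lands further along $\alpha$ than $\theta$. After roughly $f_L(R)/(100M^4)=\Theta(R)$ iterations, each contributing $C_L'R^L$ via (Y6b), the total is $CR^{L+1}$.

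You are also missing the dichotomy that makes each single application work: inside Proposition~\ref{prop:main_pieces} one looks at the $(\ell-1)$--splitting points of the carrier paths $h_i$, and either all are shallow --- in which case one descends to Proposition~\ref{prop:main_full} at level $\ell-1$ on $\Theta(f_{\ell+1}(r))$ \emph{disjoint} subdiagrams $D_i$ and gets the full $r^{\ell+1}$ at once (this, not an application of \ref{prop:main_pieces}, is where the ``sum over many disjoint spokes'' idea actually lives, one level down) --- or some $h_j'$ is long, in which case one only gains $r^{\ell}$ of new $\alpha$ but extends the $\ell$--fence and the path so the iteration can continue. Without this case analysis and the accumulation/termination argument there is no route to the exponent $L+1$. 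Two smaller points: in the base case $L=0$ the paper does not ``promote'' anything --- it truncates at the $33c$--th spoke and takes $\zeta$ to be a point, so (B2)--(B3) are vacuous; and the fact that most structured spokes exit through $\alpha$ comes from reducedness of $\gamma_1\gamma_2$, the hypothesis on $\gamma_3$, the bound $|\beta|\le M$ and $M$--adequacy (Lemma~\ref{lem:enough_curves}), not from Proposition~\ref{prop:intersects_a_spoke}.
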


\begin{proposition} \label{prop:main_pieces}
	Let $D$ be a disk diagram with simple boundary path $\gamma \alpha \eta \beta$ and basepoint $b$. 
	Let $M > \max \{|V(\Gamma)| + 1, |\beta| \}$, $L 
	\ge 1$, $n \le \frac{f_L(R)}{50M^4} - 1$ and $R > 0$ be integers.
	Let $\psi \mu_0 \dots \mu_n \theta$ be a simple path
	 in $D$ from $b$ to $\alpha$.
	Let $D_1 \subset D$ and $D_2 \subset D$ be the subdiagrams with boundary paths $\gamma \kappa_1 (\psi \mu_0 \dots \mu_n \theta)^{-1}$ and $\psi \mu_0 \dots \mu_n \theta \kappa_2 \eta \beta$ respectively, where $\alpha = \kappa_1 \kappa_2$ and the endpoint of $\kappa_1$ is the endpoint of $\theta$.
	Suppose that in $D_1$ there is an $L$--fence $\mathcal{F}$ that contains a spoke intersecting an edge-pair that is $L$--fence separated from $\eta$ with respect to $D$.
	Additionally, suppose~that:
	\begin{enumerate}
		\item[(X1)] 
			The path $\alpha$ is $R$--avoidant with respect to $D$.
		\item[(X2)]
			The path $\psi \mu_0  \dots \mu_n$ is reduced, no dual curve intersects both $\psi \mu_0 \dots \mu_n$ and $\theta$, $|\psi| \le f_L(R)$ and $|\mu_0 \dots \mu_n| \ge f_L(R)$.
		\item[(X3)]
			For each $1 \le i \le n$, let $\mathcal{R}(\mu_i)$ be the set of all edge-pairs in $\mu_i$ 
			that are dual to a 
			spoke contained in $\mathcal{F}$. Then  $\mathcal{R}(\mu_i)$ is $M$--adequate.
		\item[(X4)]
			The set of edges in $\mu_1 \dots \mu_n$ which are dual to a dual curve that intersects $\mathcal{F}$ is an $M$--adequate edge set.
		\item[(X5)] 
			There is an $M$--adequate set $\mathcal{R}(\mu_0)$ of edge-pairs in $\mu_0$ which are $L$--fence separated from $\eta$ with respect to $D_2$.
	\end{enumerate}
	Then, for $R$ large enough (depending only on $M$ and $L$), we can apply a sequence of surgeries to $D$, none of which involves a path that intersects the interior of $D_1 \subset D$, to obtain a disk diagram $E$. Moreover, $E$ contains a simple path $\psi' \mu_0' \dots \mu_{n+2}' \theta'$ from $b$ to $\kappa_2$ that does not intersect the interior of the image of $D_1$ in $E$. Additionally, we have that:
	\begin{enumerate}
		\item[(Y1)]
			Let $E' \subset E$ be the subdiagram with boundary path $\gamma \kappa_3 (\psi' \mu_0' \dots \mu_{n+2}' \theta')^{-1}$ where $\kappa_3$ is the initial subpath of $\alpha$ up to the endpoint of $\theta'$.
			There exists an $L$--fence $\mathcal{F}'$ in $E'$ which either contains every spoke of $\mathcal{F}$, or alternatively contains a spoke in~$\mathcal{R}(\mu_0)$.
		\item[(Y2)]
			The path $\psi' \mu_0' \dots \mu_{n+2}'$ is reduced, no dual curve intersects both $\psi' \mu_0' \dots \mu_{n+2}'$ and $\theta'$, and the paths $\psi', \mu_0', \dots, \mu_n'$ each track $\psi, \mu_0, \dots, \mu_n$ respectively. 					
		\item[(Y3)]
			For all $1 \le i \le n+2$, the set of edge-pairs in $\mu_i'$ 
			that are dual to a spoke in $\mathcal{F}'$ is $M$--adequate.
		\item[(Y4)]
			The set of edges in $\mu_1' \dots \mu_{n+2}'$ that are dual to a dual curve that intersects $\mathcal F '$ is an $M$--adequate edge set.
		\item[(Y5)]
			In $E'$, a dual curve which intersects $\mu_{n+1}'\mu_{n+2}'$ also intersects either $\mathcal{F}'$ or $\mu_0\mu_1 \dots \mu_n$.
		\item[(Y6)] 
			Let $\alpha'$ be the subpath of $\alpha$ from the endpoint of $\theta$ to the endpoint of $\theta'$. One of the two possibilities holds:
			\begin{enumerate}
				\item 
					 $|\mu_0' \dots \mu_{n+2}'| \ge 32 f_{L+1}(R)$ and $|\alpha'| \ge C_L R^{L+1}$ where $C_L$ depends only on $M$ and~$L$.
					 \
				\item 
					$|\mu_0' \dots \mu_{n+2}'| \ge f_{L}(R)$ and $|\alpha'| \ge C_L' R^{L}$ where $C_L'$ depends only on $M$ and~$L$. 
			\end{enumerate}
	\end{enumerate}
\end{proposition}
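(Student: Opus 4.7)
I would prove Proposition~\ref{prop:main_pieces} at level $L$ jointly with Proposition~\ref{prop:main_full}, invoking Proposition~\ref{prop:main_full} at the same level $L$ inside the argument. The workhorses will be the $L$--splitting points and separation results of Section~\ref{sec:fences}, the structured-sequence production of Section~\ref{sec:structured} (especially Lemma~\ref{lem:enough_curves}), and the tracking surgeries of Section~\ref{sec:surgery}.

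The first step is to locate an $L$--splitting point $x$ along $\mu_0$ inside the subdiagram $D_2$, measuring how far along $\mu_0$ can still be $L$--fence connected to $\eta$ via spokes of $\mathcal{R}(\mu_0)$, and to write $\mu_0 = \mu_0^a \mu_0^b$ for the corresponding initial/terminal decomposition. Hypothesis (X5) implies that past $x$ no $L$--fence links $\mu_0^b$ to $\eta$ through an $\mathcal{R}(\mu_0)$--spoke, and Proposition~\ref{prop:intersects_a_spoke} then controls how dual curves can cross out of this region. This is the structural fact underlying the dichotomy in (Y6): either enough spokes of $\mathcal{R}(\mu_0)$ landed on $\mu_0^a$ to promote $\mathcal{F}$ one level, or they concentrate past $x$ and force us to stay at level $L$.

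I would then split into the two cases of (Y6). In \emph{case (a)}, enough spokes of $\mathcal{R}(\mu_0)$ lie on $\mu_0^a$ that, combined with $\mathcal{F}$ via Lemma~\ref{lem:combining_fences} and a tight structured sequence produced by Lemma~\ref{lem:enough_curves}, one obtains an $(L{+}1)$--fence $\mathcal{F}'$ joining $\mu_0,\dots,\mu_n$ back to itself at the next level; I would then apply Proposition~\ref{prop:main_full} at level $L$ to $D_2$, taking a tracking copy of $\mu_0 \dots \mu_n$ as $\gamma_2$ and the adequate set from (X3) as (A3), and translate its conclusions (B1)--(B4) directly into (Y1)--(Y5) together with case~(a) of (Y6). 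In \emph{case (b)} no $(L{+}1)$--promotion is available: I would instead apply Lemma~\ref{lem:tracking paths} to surger in reduced tracking copies $\mu_0',\dots,\mu_n'$ (taking $\mathcal{F}' = \mathcal{F}$) and Lemma~\ref{lem:tracking_paths2} to extract two additional segments $\mu_{n+1}', \mu_{n+2}'$ from the region near $x$ in $D_2$; the position of $x$ forces the polynomial bound $C_L' R^{L}$ on $|\alpha'|$ required by case~(b), while the length $f_L(R)$ on $\mu_0'\dots\mu_{n+2}'$ is inherited from (X2) via tracking.

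The main obstacle will be bookkeeping through these surgeries. Each application of Lemmas~\ref{lem:tracking paths} and~\ref{lem:tracking_paths2} may introduce bigons or nongons that spuriously enlarge fences or produce spokes violating the $L$--fence separation from $\eta$ demanded throughout; the inequality $M > \max\{|V(\Gamma)|+1,|\beta|\}$ is what lets the at most $|V(\Gamma)|$ or $|\beta|$ exceptional dual curves per surgery be absorbed into the $M$--adequate buffers so that (Y3) and (Y4) survive. Verifying this, and verifying that no surgery intrudes into the interior of $D_1$ (as the statement demands), will require matching each newly created dual curve against the non-crossing conclusion of Lemma~\ref{lem:tracking_paths2} and against the separation clause in (X5), and arranging the surgeries in an order that confines them to $D_2$ or the outer boundary of $D$. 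The dichotomy in (Y6) is calibrated so that case~(a) feeds the outer induction with a genuine $(L{+}1)$--fence while case~(b) continues at level $L$ with enough remaining length, which is what makes the joint induction with Proposition~\ref{prop:main_full} close.
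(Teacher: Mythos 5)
Your plan has the right toolkit but the architecture of the induction and the source of the key dichotomy are both wrong. First, you propose to invoke Proposition~\ref{prop:main_full} \emph{at the same level} $L$ inside the proof of Proposition~\ref{prop:main_pieces} at level $L$. This is circular: the paper's induction proves Proposition~\ref{prop:main_full} at level $0$ first, then Proposition~\ref{prop:main_pieces} at level $\ell$ \emph{assuming Proposition~\ref{prop:main_full} at level $\ell-1$}, and only then Proposition~\ref{prop:main_full} at level $\ell$ using Proposition~\ref{prop:main_pieces} at level $\ell$. Since Proposition~\ref{prop:main_full} at level $L$ is itself established by iterating Proposition~\ref{prop:main_pieces} at level $L$ (via Lemma~\ref{lem:iterate}), your joint induction does not close. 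Relatedly, your case~(a) aims to ``promote $\mathcal{F}$ one level'' to an $(L{+}1)$--fence, but (Y1) requires $\mathcal{F}'$ to be an $L$--fence; the promotion to $(L{+}1)$--fences belongs to conclusion (B2) of Proposition~\ref{prop:main_full}, not here.

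Second, the splitting points are in the wrong place. You take an $L$--splitting point along $\mu_0$ itself; the paper instead produces, via Lemma~\ref{lem:enough_curves}, on the order of $f_{L+1}(R)$ spokes $\{H_i,K_i\}$ on $\mu_0\dots\mu_n$, runs transverse carrier paths $h_i,k_i$ from these spokes to $\kappa_2$, and takes the $(L{-}1)$--splitting point $x_i$ of $(h_i,k_i;z_i,\alpha_i)$ \emph{along each transverse path $h_i$}. The (Y6) dichotomy is then: either every initial segment $h_i'$ is short, in which case Proposition~\ref{prop:main_full} at level $L{-}1$ applies to each subdiagram $D_i$ and gives $|\alpha_i|\gtrsim R^{L}$, and summing the $\asymp R$ disjoint contributions yields the $R^{L+1}$ bound of (Y6a); or some $h_j'$ is long, in which case that transverse segment (after tracking surgeries and one more application of Proposition~\ref{prop:main_full} at level $L{-}1$) supplies the new pieces $\mu_{n+1}'\mu_{n+2}'$ and only an $R^{L}$ bound, i.e.\ (Y6b). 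Your proposal supplies no mechanism for multiplying a linear count of disjoint subpaths by an $R^{L}$ lower bound on each, which is precisely where the degree increment comes from; a splitting point on $\mu_0$ alone cannot produce it.
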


\begin{remark}
We note that the constant $32$ in (Y6) and (B1) is chosen so that the equality $32f_l(r/16) = 2f_l(r)$ holds (which will be utilized in the proof). Moreover, $f_l(r)$ is chosen to decrease exponentially with respect to $l$ and so that $f_{l+1}(r)$ is much smaller than $32f_{l}(r)$ for all integers $l \ge 0$.
\end{remark}

Our strategy in proving the above two propositions is the following. We first show that Proposition \ref{prop:main_full} holds when $L=0$. Next, we show that
 Proposition \ref{prop:main_pieces} holds for $L = \ell \ge 1$ given that
 Proposition \ref{prop:main_full} holds for $L = \ell-1$. 
 Finally, we show that if Proposition \ref{prop:main_pieces} holds for $L = \ell \ge 1$ then Proposition \ref{prop:main_full} also holds for $L = \ell$.

\begin{figure}[h]
	\centering
		\begin{overpic}[scale=.41]{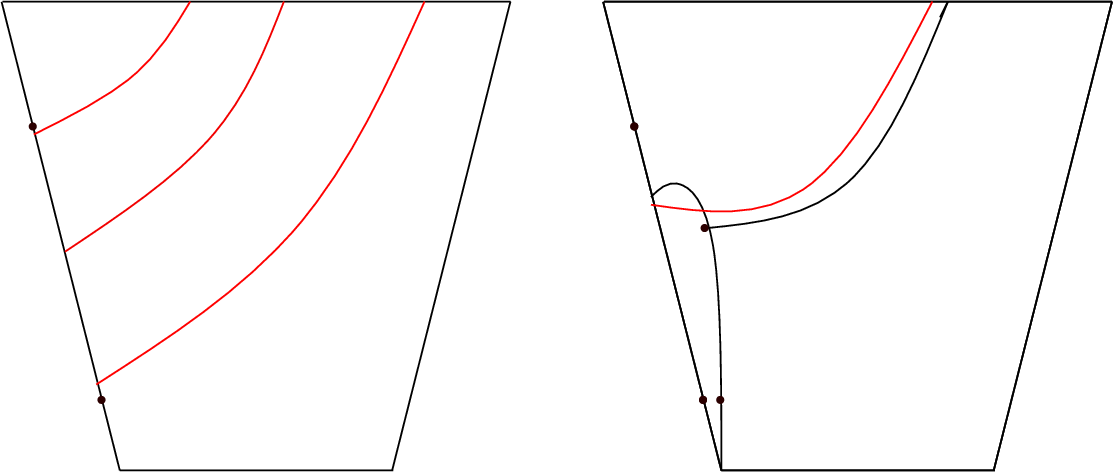}
			\put(20,-4){\Tiny $D$}	
			\put(6,2){\Tiny $\gamma_1$}	
			\put(2,15){\Tiny $\gamma_2$}	
			\put(-2,35){\Tiny $\gamma_3$}	
			\put(20,43){\Tiny $\alpha$}
			\put(40,19){\Tiny $\eta$}
			\put(21,1){\Tiny $\beta$}
			\put(21,15){\Tiny $H_1$}
			\put(12,22.5){\Tiny $H_{33c}$}
			\put(10,32){\Tiny $H_{t}$}
			
			\put(48,15){\Tiny $\to$}
			
			\put(60,2){\Tiny $\gamma_1$}	
			\put(56,19){\Tiny $\gamma_2$}	
			\put(52,35){\Tiny $\gamma_3$}	
			\put(65,2){\Tiny $\hat{\gamma}_1$}	
			\put(65,12){\Tiny $\hat{\gamma}_2$}	
			\put(82,35){\Tiny $\pi$}	
			\put(70,32){\Tiny $H_{33c}$}			
			\put(94,19){\Tiny $\eta$}
			\put(75,1){\Tiny $\beta$}
			
		\end{overpic}
		\caption{Disk diagrams for the proof of Proposition~\ref{prop:main_full} when $L=0$.}
	\end{figure}

\begin{proof}[Proof of Proposition \ref{prop:main_full} for $L=0$.]	
	Let $D$ be a disk diagram as in Proposition \ref{prop:main_full} with $L = 0$.
	Set $c := f_1(R)$.
	By Lemma \ref{lem:enough_curves} (where in that lemma we set $k=0$, $\psi= \gamma_1$, $\mu_0 = \gamma_2$ and $\theta = \gamma_3$)
	there exists a sequence of spokes $\{H_1, K_1\}, \dots, \{H_t, K_t\}$ in $\mathcal{R}(\gamma_2)$ and structured with respect to $(\gamma_2, \alpha)$ such that 
	\[ t \ge \frac{1}{M^2} \Big \lfloor \frac{|\gamma_2|}{M} \Big \rfloor - 3M \ge \frac{1}{M^2} \Big \lfloor \frac{f_0(R)}{M} \Big \rfloor - 3M \ge \frac{f_0(R)}{2M^3} \ge 34 f_1(R) = 34c \]
	where the second and third inequality follow respectively from (A2) and $R$ being large enough.
	
	Let $e$ be the edge of $\gamma_2$ dual to $H_{33c}$. 
	Let $\rho$ be the initial subpath of $\gamma_1\gamma_2$ up to and including $e$, and
	let $w = s_1 \dots s_n$ be its label. 
	As $\{H_1, K_1\}, \dots, \{H_t, K_t\}$ is a structured sequence, there exist non-adjacent vertices $s$ and $t$ of $\Gamma$ such that $H_i$ and $K_i$ are of type $s$ and $t$ respectively for all $1 \le i \le t$.
	In particular, $s_n = s$.
	
	Using Lemma \ref{lem:tracking_paths2}, we apply a disk diagram surgery to replace $\rho$ with a reduced path $\rho'$, with label $w' = s_1' \dots s_n'$ 
	such that, for some $1 \le j \le  n$: (1) $s_j'$ is of type $s$, (2) $s_i'$ and $s_j'$ are adjacent vertices of $\Gamma$ for all $i > j$ and (3) for every $i < j$, the dual curve dual to the edge of $\rho'$ labeled by $s_i'$ does not intersect the dual curve dual to the edge of $\rho'$ labeled by $s_j'$. 	
	
	As both $\rho$ and $\rho'$ are reduced and by (1) and (2) above, it follows that $H_{33c}$ (in this resulting diagram) intersects the edge $e' \subset \rho'$ labeled by $s_j'$.
	Let $\rho''$ be the initial subpath of $\rho'$ 
	up to, and not including, $e'$.
	We have that $\rho'' = \hat{\gamma}_1 \hat{\gamma}_2$ (with the notation as in Convention \ref{conv:hat}) where $ \hat{\gamma}_1$ (resp.  $\hat{\gamma}_2$) tracks $\gamma_1$ (resp. $\gamma_2$).
	Furthermore, as structured dual curves are pairwise non-intersecting, for all $1 \le i < 33c$, $H_i$ intersects $\hat{\gamma}_2$ and in particular $|\hat{\gamma}_2| > 32c$. 
	
	We define $\gamma_1' = \hat{\gamma}_1$ and $\gamma_2'=\hat{\gamma}_2$.
	We set $\zeta$ to be the endpoint of $\gamma_2'$ (i.e., a length $0$ path).
	Finally, we set $\pi$ to be a simple path in the carrier $N(H_{33c})$ from the endpoint of $\gamma_2'$ to $\alpha$.

	We now check that the conclusions of Proposition \ref{prop:main_full} are satisfied with these choices of $\gamma_1'$, $\gamma_2'$, $\zeta$ and $\pi$.
	By our application of Lemma \ref{lem:tracking_paths2}, $\gamma_1'\gamma_2'\zeta$ is reduced (as $\hat{\gamma}_1 \hat{\gamma}_2$ is reduced), $\gamma_1'$ tracks $\gamma_1$, $\gamma_2'$ tracks $\gamma_2$, any dual curve dual to $\gamma_1'\gamma_2'$ does not intersect $\pi$ (as it does not intersect $H_{33c}$ by (3) above) and $|\gamma_2' \zeta| \ge 32c = 32f_1(R)$. Thus, (B1) follows.

	Conclusions (B2) and (B3) hold trivially as $|\zeta| = 0$.
	To see (B4), note that since the dual curves $H_1, K_1, \dots, H_t, K_t$ are pairwise non-intersecting, it follows that, for $33c \le i \le 34c$, $H_i$ intersects the subpath of $\alpha$ between the endpoint of $\gamma_3$ and the endpoint of $\pi$.
	In particular, this subpath has length at least $c = f_1(R) \ge C_0 R$ where $C_0$ depends only on $M$. 
	Thus, the conclusions of Proposition~\ref{prop:main_full} hold.
\end{proof}

\begin{figure}[h]
	\centering
	\begin{subfigure}{.49\textwidth}
		\centering
		\begin{overpic}[scale=.5]{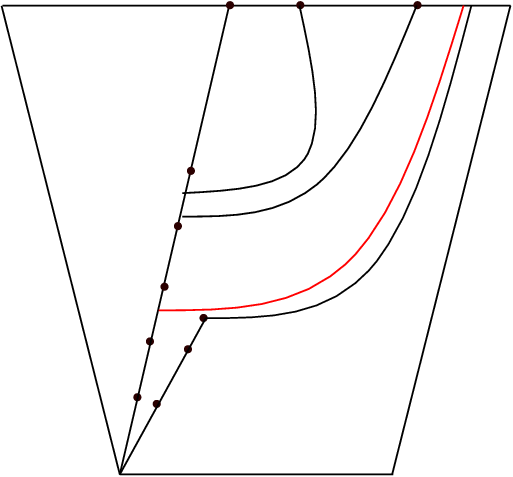}
			\put(50,40){\Tiny $H_{33c}$}	
			\put(70,80){\Tiny $h_i$}	
			\put(55,80){\Tiny $k_i$}	
			\put(63,70){\Tiny $D_i$}	
			\put(58,95){\Tiny $|\alpha_i| \approx R^L$}	
			\put(22,9){\Tiny $\psi$}
			\put(21,20){\Tiny $\mu_0$}
			\put(23,30){\Tiny $\mu_1$}
			\put(26,42){\Tiny $\mu_2$}
			\put(29,54){\Tiny $\mu_3$}
			\put(36,70){\Tiny $\theta$}
			
			\put(29,7){\Tiny $\psi'$}
			\put(33,17){\Tiny $\mu_0'$}
			\put(38.5,26){\Tiny $\mu_1'$}
			\put(65,30){\Tiny $\theta'$}

		\end{overpic}
		\caption{Case 1 where $m = 1$, $n=3$ and $i > 33c$.}
	\end{subfigure}
	\begin{subfigure}{.49\textwidth}
		\centering
		\begin{overpic}[scale=.5]{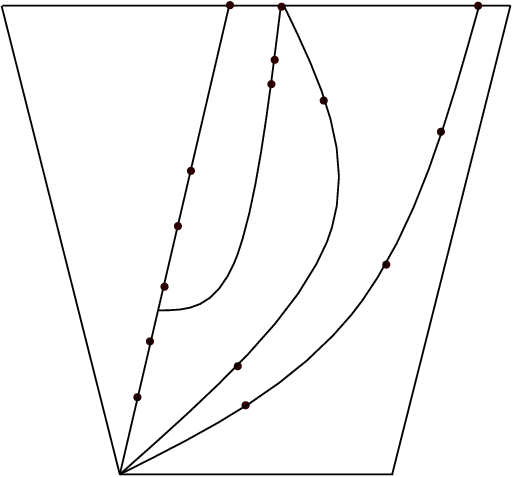}
		\put(22,9){\Tiny $\psi$}
		\put(21,20){\Tiny $\mu_0$}
		\put(23,30){\Tiny $\mu_1$}
		\put(26,42){\Tiny $\mu_2$}
		\put(29,54){\Tiny $\mu_3$}
		\put(36,70){\Tiny $\theta$}
		
		\put(42,50){\Tiny $h_j'$}
		\put(49,78){\Tiny $a$}
		\put(47,85){\Tiny $h_j''$}
	
		\put(35,17){\Tiny $\gamma_1$}
		\put(66,60){\Tiny $\gamma_2$}
		\put(61,80){\Tiny $\gamma_3$}
		
		\put(38,5){\Tiny $\gamma_1'$}
		\put(60,20){\Tiny $\gamma_2'$}
		\put(83,55){\Tiny $\zeta$}
		\put(91,80){\Tiny $\pi$}

		\put(60,95){\Tiny $|\alpha''| \approx R^L$}

		\end{overpic}
		\caption{Case 2.}
	\end{subfigure}
	\caption{Proof of Proposition \ref{prop:main_pieces} assuming Proposition \ref{prop:main_full} for $L = \ell-1$.}
\end{figure}

\begin{proof}[Proof of Proposition \ref{prop:main_pieces} for $L = \ell \ge 1$, assuming Proposition \ref{prop:main_full} for $L = \ell-1$]

	Suppose that $D$ is a disk diagram satisfying the hypotheses of Proposition \ref{prop:main_pieces} with $L = \ell$ and $R = r$ and that the statement of Proposition \ref{prop:main_full} is true for $L = \ell - 1$. 

	Set $c := f_{\ell+1}(r)$. 
	Let $\nu$ be the initial subpath of $\mu_0 \dots \mu_n$ of length $\lceil f_{\ell}(r) \rceil$, which exists by (X2). 
	By Lemma \ref{lem:enough_curves}, (X2), (X3) and (X5) there exists a tight sequence of  spokes $\{H_1, K_1\}, \dots, \{H_t, K_t\}$ in $D_2$ structured with respect to $(\nu, \kappa_2)$ such that
	\[t \ge \frac{1}{M^2} \Big \lfloor \frac{|\nu|}{M} \Big \rfloor - 3(n+1)M \ge \frac{1}{M^2} \Big \lfloor \frac{  f_{\ell}(r)}{M} \Big \rfloor - \frac{3 f_{\ell}(r)}{50M^3} \ge \frac{f_\ell(r)}{2M^3} \ge 34f_{\ell+1}(r) = 34c\]
	and for each $1 \le i \le t$, $\{H_i, K_i\}$ is in $\mathcal{R}(\mu_j)$ for some distinct $0 \le j \le n$ .

	For each $33c < i \le 34c$, 
	let $h_i$ (resp. $k_i$) be a simple path in the carrier $N(H_i) \subset D_2$ (resp. $N(K_i) \subset D_2$) starting from the edge of $\mu_0 \dots \mu_{n}$ dual to $H_i$ (resp. $K_i$) and with endpoint on $\kappa_2$. Additionally, we choose $h_i$ (resp. $k_i$) so that it does not intersect $H_i$ (resp. $K_i$) and intersects $\kappa_2$ at exactly one vertex.
	As the dual curves $H_1, K_1, \dots, H_t, K_t$ are pairwise non-intersecting, these paths can additionally be chosen to be pairwise non-intersecting.
	By applying a series of surgeries to insert reduced paths in place of the $h_i$ and $k_i$ using Lemma \ref{lem:tracking paths}, we assume that $h_i$ and $k_i$ are reduced for each $33c < i \le 34c$. Note that, even after such replacements, we still have that these paths are pairwise non-intersecting and that any dual curve that intersects $h_i$ (resp. $k_i$) must also intersect $H_i$ (resp. $K_i$).
	By a slight abuse of notation, we still denote this resulting diagram by $D$.
	Let $z_i$ be the subpath of $\mu_0 \dots \mu_{n}$ from the starting point of $h_i$ to that of $k_i$. By construction, $z_i \subset \mu_j$ for some distinct $0 \le j \le n$, $|z_i| \le |V(\Gamma)|$ (as our structured sequence of dual curves is tight) and the path $h_i^{-1} z_i k_i$ is a simple path with endpoints on $\kappa_2$.

	For each $33c < i \le 34c$, let $D_i \subset D$ be the subdiagram with boundary path $h_i \alpha_i k_i^{-1} z_i^{-1}$, where $\alpha_i$
	is the subpath of $\alpha$ between the endpoint of $h_i$ and the endpoint of $k_i$.
	Note that $\alpha_i \cap \alpha_j = \emptyset$ for all $i \neq j$.
	Let $x_i$ be the $(\ell-1)$--splitting point of $(h_i, k_i; z_i, \alpha_i)$ in $D_i$.
	Up to applying surgeries to $D_i$, we can assume that no diagram obtained from $D_i$ by a sequence of surgeries is such that the $(\ell-1)$--splitting point of $(h_i, k_i; z_i, \alpha_i)$ in this diagram occurs after $x_i$ along the orientation of $h_i$. 
	In other words, up to surgeries on $D_i$, $x_i$ is as far along $h_i$ as possible.
	Let $h_i'$ and $h_i''$ be the initial and terminal paths of $h_i$ with respect to $x_i$.
	There are now two main cases to consider, depending on whether or not $|h_i'|$ is small for all $33c < i \le 34c$. We will prove that Proposition \ref{prop:main_pieces} holds with (Y6a) in the first case and with (Y6b) in the second.
	\\\\
	\textbf{Case 1:}
	Suppose first that $|h_i'|  \le \frac{f_{\ell-1}(r)}{8}$ for all $33c < i \le 34c$.
	
	Let $e$ be the edge of $\mu_0 \dots \mu_n$ which is dual to $H_{33c}$.
	Let $0 \le m \le n$ be such that $e$ lies on $\mu_m$.
	Let $\rho$ be the initial subpath of $\psi \mu_0 \dots \mu_m$ up to and including $e$.
	We apply Lemma \ref{lem:tracking_paths2} to replace $\rho$ with a path $\rho'$, and by a slight abuse of notation, we denote by $H_{33c}$ the dual curve in this resulting diagram dual to a copy of the edge $e$.
	Let $\rho''$ be the initial subpath of $\rho'$ up to, and not including, the edge dual to $H_{33c}$.
	As in the proof above of Proposition \ref{prop:main_full} for the case $L=0$, we have that  $\rho'' = \hat{\psi} \hat{\mu}_0 \dots \hat{\mu}_m$ and that $|\hat{\mu}_0 \dots \hat{\mu}_m| \ge 32c$.
	We also have that no dual curve dual to $\rho''$ intersects $H_{33c}$.

	We would like to show that the conclusions of Proposition \ref{prop:main_pieces} hold by setting $\psi' = \hat{\psi}$, 
	setting $\mu_i' = \hat{\mu}_i$ for $1 \le i \le m$, 
	setting $\mu_i'$ to be the endpoint of $\hat{\mu}_m$ for $m < i \le n+2$ (i.e., a length $0$ path), 
	setting $\mathcal F' = \mathcal F$, and defining $\theta'$ to be a simple subpath of $N(H_{33c})$ with starting point the endpoint of $\hat{\mu}_m$ and endpoint on $\kappa_2$.
	Indeed, conclusions (Y1) and (Y2) immediately follow from our choices, and (Y3) holds from hypothesis (X3) as, for each $1 \le i \le n+2$, $\mu_i'$ either tracks $\mu_i$ or has length $0$. 
	Conclusion (Y4) holds from hypothesis (X4), as $\mu_1' \dots \mu_{n+2}'$ tracks $\mu_1 \dots \mu_n$. Conclusion (Y5) holds trivially as $|\mu_{n+1}'\mu_{n+2}'| = 0$. 
	Additionally, we have that $|\mu_0' \dots \mu_{n+2}'| = |\hat{\mu}_0 \dots \hat{\mu}_m| \ge 32c =  32f_{\ell+1}(R)$, so the first part of (Y6a) holds. 
	The remainder of this case consists of showing the bound  from  (Y6a) on the subpath $\alpha' \subset \alpha$.
	
	For $33c < i \le 34c$, set $\gamma_1^i$ to be the initial subpath of $h_i$ up to the starting point of $h_i''$. 
	By the definition of splitting points, $\gamma_1^i$ is just $h_i'$ with possibly the addition of an edge.
	Set $\gamma_2^i = h_i''$ and $\gamma_3^i$ to be the endpoint of $h_i''$ (i.e., a length $0$ path). 
	We get the following equations for $r$ large enough (depending only on $M$ and $\ell$), the first of which follows from (X1).
	\begin{align}
		&|\gamma_2^i| = |h_i''| \ge r - |\psi| - |\nu| - |h_i'| - 1\ge r - 2f_\ell(r) - \frac{f_{\ell-1}(r)}{8} - 1  \ge f_{\ell -1} \big(\frac{r}{4} \big ) \label{eq1} \\
		&r - |\psi| - |\nu| \ge r - 2f_\ell(r) \ge \frac{r}{4}  \label{eq2} \\
		&|\gamma_i^1| \le |h_i'| +1 \le \frac{f_{\ell - 1}(r)}{8} + 1 \le  f_{\ell -1 }(\frac{r}{4}) \label{eq_2} 
	\end{align}
	
	We now show that the hypotheses of Proposition \ref{prop:main_full} hold for the disk diagram $D_i$ with boundary path $\gamma_i^1 \gamma_i^2 \gamma_i^3 \alpha_i k_i^{-1} z_i^{-1}$ for $L = \ell - 1$ and $R = \frac{r}{4}$ (where in that proposition we set $\gamma_1 = \gamma_i^1$, $\gamma_2 = \gamma_i^2$, $\gamma_3 = \gamma_i^3$, $\alpha = \alpha_i$, $\eta = k_i^{-1}$ and $\beta = z_i^{-1}$).
	First note that as $|z_i| \le |V(\Gamma)|$, the same $M$ can be used in that proposition as the one used here. 
	Hypothesis (A1) of that proposition holds, as by equation (\ref{eq2}) and by (X1), the path $\alpha_i$ is $\frac{r}{4}$--avoidant with respect to $D_i$.
	We now check (A2).
	The path $\gamma_1^i \gamma_2^i$ is reduced as it is equal to the reduced path $h_i$. 
	No dual curve intersects $\gamma_3^i$ as it has length $0$.
	That $|\gamma_2^i| \ge f_{\ell - 1}(\frac{r}{4})$ and $|\gamma_1^i| \le f_{\ell - 1 }(\frac{r}{4})$ follow respectively from equations (\ref{eq1}) and~(\ref{eq_2}).

	By Remark \ref{rmk:splitting_points}, no $(\ell-1)$--fence connects $h_i''$ to $k_i$ in $D_i$. 
	Furthermore, by our choice of $D_i$ (with $x_i$ as furthest as possible along $h_i$) this is still true after performing surgeries to $D_i$. 
	Thus, every spoke which intersects $\gamma_2^i$ is $(\ell -1)$--fence separated from $k_i^{-1}$, and hypothesis~(A3) follows.

	As all the required hypotheses hold, we apply Proposition \ref{prop:main_full} and deduce, from conclusion (B4) of that proposition, that $|\alpha_i| \ge C_{\ell-1} (\frac{r}{4})^{\ell}$ for each $33c < i \le 34c$. 
	As the paths $\{\alpha_i\}$ are disjoint, we have that 
	\[ |\alpha'| \ge \sum_{i={33c+1}}^{34c} |\alpha_i| \ge c \Big(C_{\ell-1} \big(\frac{r}{4} \big)^{\ell} \Big) = f_{\ell +1} (r) \Big(C_{\ell-1} (\frac{r}{4})^{\ell} \Big)  \ge C_{\ell} r^{\ell+1}\]
	for some constant $C_\ell$ depending only on $M$ and $\ell$.
	Thus (Y6a) holds, and we are done in this case.
	\\\\
	\textbf{Case 2:} 
	By the previous case, we may assume that $|h_j'| > \frac{f_{\ell-1}(r)}{8}$ for some $33c < j \le 34c$. 
	We fix such a $j$. Let $0 \le m \le n$ be such that $\{H_j, K_j\}$ intersects $\mu_m$.
	Let $v$ be the starting point of $h_j$.
	Let $\omega$ be the initial subpath of $\psi \mu_0 \dots \mu_m$ up to $v$.

	By Lemma \ref{lem:tracking paths}, we can apply surgery to $\omega h_j$ to obtain a reduced path $\hat{\omega}\hat{h}_j = \hat{\psi} \hat{\mu}_0 \dots \hat{\mu}_m \hat{h}_j$ which tracks $\omega h_j$ and such that $\hat{h}_j = \hat{h}_j' \hat{a} \hat{h}_j''$ where $a$ is edge between the endpoint of $h_j'$ and the starting point of $h_j''$ as in the definition of splitting points.
	As $\omega$ and $h_j'$ are reduced paths, by the triangle inequality
	we have that 
	\begin{align}
		|\hat{h}_j'| \ge |h_j'| - |\psi| - |\nu| > \frac{f_{\ell - 1}(r)}{8} - 2f_{\ell}(r)  \ge f_{\ell - 1} \big(\frac{r}{16} \big)  
		\label{eq3}
	\end{align}
	Additionally, we have:
	\begin{align}
		|\hat{\psi} \hat{\mu}_1 \dots \hat{\mu}_m| &\le |\psi| + |\nu| \le 2f_{\ell}(r) \le f_{\ell - 1} \big(\frac{r}{16} \big) 
		\label{eq4}
	\end{align}

	Set $\gamma_1 = \hat{\psi} \hat{\mu}_0 \dots \hat{\mu}_m$. 
	Let $\gamma_2$ be the initial subpath of $\hat{h}_j'$ of length $\lceil f_{\ell - 1}(\frac{r}{16}) \rceil$, which exists by equation (\ref{eq3}) above.
	Let $\gamma_3$ be the subpath of $\hat{h}_j$ from the endpoint of $\gamma_2$ to the endpoint of $\hat{h}_j$.
	Let $\alpha''$ be the subpath of $\alpha$ from the endpoint of $\gamma_3$ to the endpoint of $\alpha$.

	We would now like to apply Proposition \ref{prop:main_full} to the disk diagram $D' \subset D$ with boundary path $\gamma_1 \gamma_2 \gamma_3 \alpha'' \eta \beta$ with $L = \ell - 1$ and $R = \frac{r}{16}$. 
	We first check that the hypotheses of that proposition hold.
	The path $\alpha''$ is $\frac{r}{16}$--avoidant with respect to $D'$, as it is a subpath of $\alpha$ which is $r$--avoidant with respect to $D$. Thus, (A1) holds.
	We now check (A2).
	As $\gamma_1 \gamma_2 \gamma_3 = \hat{\omega} \hat{h}_j$ is reduced, so is the path $\gamma_1 \gamma_2$,  and consequently no dual curve intersects both $\gamma_1 \gamma_2$ and $\gamma_3$.
	The bounds on $\gamma_1$ and $\gamma_2$ follow respectively from equation (\ref{eq4}) and our choice of $\gamma_2$.
	
	Let $\mathcal{F}'$ be the maximal $\ell$--fence in $D \setminus D'$ which contains $\{H_j, K_j\}$.
	If $m=0$, then $\{H_j, K_j\}$ is in $\mathcal{R}_0$ and so $\mathcal{F}'$ contains a spoke in $\mathcal{R}_0$.
	On the other hand, if $m >0$ then $\{H_j, K_j\}$ is in $\mathcal{R}_m$ and so $\mathcal{F}'$ contains $\mathcal{F}$ by (X3) and the maximality of $\mathcal{F}'$.
	In either case, $\mathcal{F}'$ must contain a spoke dual to an edge-pair that is  $\ell$--fence separated from $\eta$.
	
	Hypothesis (A3) now follows from the second statement of Claim \ref{claim:main_prop} below. To see this, let $\{e,f\}$ be an edge-pair in $\mathcal{R}(\gamma_2)$ where $\mathcal R (\gamma_2)$ is as in the claim. 
	Suppose for a contradiction that, after applying a series of surgeries to $D'$, in the resulting diagram there is a spoke $\mathcal{S}$ dual to $\{e,f\}$ that is contained in an $(\ell-1)$--fence $\mathcal{Z}$ which intersects $\eta$. 
	As $\mathcal{S}$ is in $\mathcal{R}(\gamma_2)$, it is also contained in $\mathcal{F}'$.
	By Lemma \ref{lem:combining_fences}, $\mathcal{F}' \cup \mathcal{Z}$ is an $\ell$--fence. This $\ell$--fence intersects $\eta$ as it contains $\mathcal Z$, and it contains a spoke that is dual to an edge-pair that is $\ell$--fence separated from $\eta$ (as $\mathcal{Z}$ does). This is a contradiction, and consequently (A3) follows.

\begin{figure}[h]
	\centering
		\begin{overpic}[scale=.5]{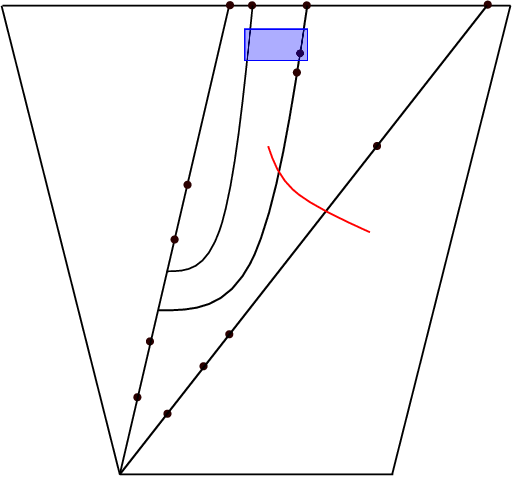}
			\put(22,9){\Tiny $\psi$}
			\put(21,20){\Tiny $\mu_0$}
			\put(23,30){\Tiny $\mu_1$}
			\put(32,36){\Tiny $z_j$}
			\put(28,51){\Tiny $\mu_2$}

			\put(48,72){\Tiny $k_j$}			
			\put(57,68){\Tiny $h_j'$}
			\put(60,87){\Tiny $h_j''$}
			\put(51,94){\Tiny $\alpha_j$}

			\put(29,4){\Tiny $\hat \psi$}
			\put(37,14){\Tiny $\hat \mu_0$}
			\put(43,22){\Tiny $\hat \mu_1$}
			
			\put(52,82){\Tiny$\mathcal{G}$}
			\put(69,49){\Tiny $Q$}	
			\put(53,35){\Tiny $\gamma_2$}
			\put(83,73){\Tiny $\gamma_3$}
		\end{overpic}
	\caption{Proof of Claim~\ref{claim:main_prop} when $m=1$.}
	\end{figure}
	\begin{claim} \label{claim:main_prop}
		A dual curve intersecting $\gamma_2$ must either intersect $\mathcal{F}'$ or intersect $z_j$.
		Additionally, the set $\mathcal{R}(\gamma_2)$ of edge-pairs dual to a spoke contained in $\mathcal{F}'$ is $M$--adequate. 
	\end{claim}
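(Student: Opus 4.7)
The plan is as follows. For the first statement, let $Q$ be a dual curve intersecting $\gamma_2$. Since $\gamma_2 \subseteq \hat{h}_j$ and $\hat{h}_j$ tracks $h_j$, we get that $Q$ intersects $h_j$. By the carrier-based property preserved through the earlier round of surgeries---that any dual curve intersecting $h_j$ (resp.\ $k_j$) must also intersect $H_j$ (resp.\ $K_j$)---it follows that $Q$ intersects $H_j$. If moreover $Q$ intersects $K_j$, then $Q$ intersects the spoke $\{H_j, K_j\}$, which is a spoke of $\mathcal{F}'$ by the construction of $\mathcal{F}'$ in both the $m=0$ and $m>0$ cases, and so $Q$ intersects $\mathcal{F}'$.

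Otherwise $Q$ does not intersect $K_j$, and consequently does not intersect $k_j$ (by the contrapositive of the surgery-preserved property). Viewing $Q \cap D_j$ as a chord of the subdiagram $D_j$ bounded by $h_j \alpha_j k_j^{-1} z_j^{-1}$, and using that $h_j$ is reduced so $Q$ crosses it only once, the chord must exit $D_j$ through either $z_j$ or $\alpha_j$. To rule out the $\alpha_j$ exit, I will invoke the extremality of the splitting point $x_j$: a Lemma~\ref{lem:tracking paths} surgery that replaces $Q$ with a reduced representative inside $D_j$, combined with the spoke $\{H_j, K_j\}$, will exhibit an $(\ell-1)$-fence connecting $h_j''$ to $k_j$ inside $D_j$, contradicting that $x_j$ is the $(\ell-1)$-splitting point of $(h_j, k_j; z_j, \alpha_j)$ and that $D_j$ was chosen so that no further surgery moves the splitting point later along $h_j$. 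Hence $Q$ must intersect $z_j$, completing the first statement.

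For the second statement, let $\{P_1, R_1\}, \dots, \{P_t, R_t\}$ be any sequence structured with respect to $\gamma_2$. The $2t$ dual curves $P_1, R_1, \dots$ are pairwise disjoint, so at most $|z_j| \le |V(\Gamma)| \le M$ of them can intersect $z_j$. By the first statement the remaining dual curves each cross a spoke of $\mathcal{F}'$, and consequently at most $M$ of the spokes $\{P_i, R_i\}$ fail to have both dual curves crossing spokes of $\mathcal{F}'$. For each of the remaining spokes $\{P, R\}$, I will splice $\{P, R\}$ (as a $0$-fence, hence an $(\ell-1)$-fence) into the sequence of $(\ell-1)$-fences defining $\mathcal{F}'$, using that both $P$ and $R$ intersect spokes of $\mathcal{F}'$ to satisfy the connection condition from the definition of an $\ell$-fence; the resulting $\ell$-fence shares a spoke with $\mathcal{F}'$, so by Lemma~\ref{lem:combining_fences} and the maximality of $\mathcal{F}'$ it must be contained in $\mathcal{F}'$, forcing $\{P, R\} \in \mathcal{R}(\gamma_2)$. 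This proves the $M$-adequacy.

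The main obstacle will be the $\alpha_j$-exit contradiction in the first statement: the Lemma~\ref{lem:tracking paths} surgery on $D_j$ must be designed so that the modified $Q$, together with $\{H_j, K_j\}$, honestly forms an $(\ell-1)$-fence reaching past the splitting point, while keeping $\mathcal{F}$ and the surgery invariants of $\{H_j, K_j\}$ intact so that the maximality of $x_j$ is genuinely violated.
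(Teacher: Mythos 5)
There is a genuine gap, and it is exactly at the point you flag as "the main obstacle": the $\alpha_j$-exit case. You try to \emph{rule out} a dual curve $Q$ crossing from $\gamma_2\subset \hat h_j'$ to $\alpha_j$ by contradicting the extremality of the splitting point $x_j$, but no contradiction is available. A single dual curve dual to $h_j'$ and to $\alpha_j$, even together with $\{H_j,K_j\}$, does not produce an $(\ell-1)$--fence connecting $h_j''$ to $k_j$: such a fence needs a spoke both of whose dual curves meet $h_j''$, and your $Q$ meets $h_j'$, not $h_j''$ (nor do $H_j,K_j$ meet $h_j''$ or $k_j$ — they are dual to $z_j$). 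Indeed dual curves exiting through $\alpha_j$ genuinely occur, and the claim is stated so as to accommodate them. The missing idea is the \emph{positive} use of the splitting point: since we are in Case 2, $|h_j'|>0$, so $x_j$ is not the start of $h_j$ and Remark \ref{rmk:splitting_points} supplies an $(\ell-1)$--fence $\mathcal{G}$ in $D_j$ connecting $h_j\setminus h_j'$ to $k_j$. Because every dual curve meeting $h_j$ (resp.\ $k_j$) meets $H_j$ (resp.\ $K_j$), the pair $(\{H_j,K_j\},\mathcal{G})$ is an $\ell$--fence, so by Lemma \ref{lem:combining_fences} and maximality all spokes of $\mathcal{G}$ lie in $\mathcal{F}'$. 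A dual curve exiting through $\alpha_j$ then intersects a spoke of $\mathcal{G}$ by Proposition \ref{prop:intersects_a_spoke}, hence intersects $\mathcal{F}'$. That is the intended argument; your route proves a stronger statement that is false.

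The second part inherits this gap and has an additional soft spot. Knowing only that $P$ and $R$ each intersect \emph{some} spoke of $\mathcal{F}'$ does not let you splice $\{P,R\}$ into $\mathcal{F}'$: Definition \ref{def_L_fence} requires $P$ and $R$ to link to spokes of a single adjacent constituent fence in the defining sequence. The paper avoids this by ensuring both dual curves of $\{X_i,Y_i\}$ attach to one common object — either both cross the single spoke $\{H_j,K_j\}$ (when both exit through $k_j$), giving the $1$--fence $(\{X_i,Y_i\},\{H_j,K_j\})$, or both cross spokes of the single $(\ell-1)$--fence $\mathcal{G}$ (when both exit through $\alpha_j$, via Proposition \ref{prop:intersects_a_spoke}), giving the $\ell$--fence $(\{X_i,Y_i\},\mathcal{G})$; the pairwise-disjointness and ordering of a structured sequence guarantee that all but at most $M$ spokes fall into one of these two uniform cases. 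You should restructure both parts around the fence $\mathcal{G}$.
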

	\begin{proof}
		Set $\mathcal T := \{H_j, K_j\}$.
		By Remark \ref{rmk:splitting_points}, there exists an $(\ell-1)$--fence $\mathcal{G}$ connecting $h_j \setminus h_j'$ to $k_j$. 
		As every dual curve which intersects $h_j$ (resp. $k_j$) must intersect $H_j$ (resp. $K_j$), it follows that $\mathcal T \cup \mathcal{G}$ is an $\ell$--fence. 
		By Lemma \ref{lem:combining_fences} and as $\mathcal F'$ is maximal, $\mathcal T \cup \mathcal{G} \subset \mathcal F'$.
	
		To show the first claim, consider a dual curve $Q$ intersecting $\gamma_2$. 
		As $\gamma_2 \subset \hat{h}_j'$ and as $\hat{h}_j'$ tracks $h_j'$, $Q$ intersects $h_j'$.
		Furthermore, as $h_j$ is a reduced path, by considering $Q$ as a dual curve in $D_j$, one sees that $Q$ must intersect either $k_j$, $z_j$ or $\alpha_j$. 
		If $Q$ intersects $k_j$, then it intersects $\mathcal T$ and so intersects $\mathcal{F}'$. On the other hand, if $Q$ intersects $\alpha_j$, then by Proposition \ref{prop:intersects_a_spoke}, it intersects $\mathcal{G}$ and so intersects $\mathcal{F}'$. 
		The first claim now follows.
		
		To prove the second claim, consider a sequence $\{X_1, Y_1\}, \dots, \{X_q, Y_q\}$ of spokes structured with respect to $\gamma_2$.
		We must show that all but possibly $M$ of these spokes are contained in $\mathcal{F}'$.
		As $\hat{h}_j'$ tracks $h_j'$, these spokes are structured with respect to $h_j'$.
		As $|z_j| \le |V(\Gamma)|$ and as $M > |V(\Gamma)| + 1$, all but possibly $M$ of the spokes $\{X_i, Y_i\}$ intersect either $\alpha_j$ or $k_j$.
		If $\{X_i, Y_i\}$ intersects $k_j$, then it intersects $\mathcal T$.
		In this case, $\{\{X_i, Y_i\}, \mathcal T\}$ is a $1$--fence which contains a spoke of $\mathcal{F}'$ and so $\{X_i, Y_i\}$ is in $\mathcal{F}'$ by Lemma \ref{lem:combining_fences}.
		On the other hand, if $\{X_i, Y_i\}$ intersects $\alpha_j$, then it intersects $\mathcal{G}$ by Proposition \ref{prop:intersects_a_spoke}. In this case, $\{\{X_i, Y_i\}, \mathcal{G}\}$ is an $\ell$--fence containing a spoke of $\mathcal{F}'$ and again by Lemma \ref{lem:combining_fences} we deduce that $\{X_i, Y_i\}$ is contained in $\mathcal{F}'$.
	\end{proof}

	As the appropriate hypotheses are satisfied, we can now apply Proposition \ref{prop:main_full} to obtain a path $\gamma_1' \gamma_2' \zeta \pi$ in $D'$ from $b$ to $\alpha''$ satisfying the conclusions of that proposition for $L = \ell - 1$ and $R = \frac{r}{16}$. As $\gamma_1'$ tracks $\gamma_1$ by (B1), we have that $\gamma_1' = \hat{\hat{\psi}} \hat{\hat{\mu}}_0 \dots \hat{\hat{\mu}}_m$, and as $\gamma_2'$ tracks $\gamma_2$, we have that~$\gamma_2' = \hat{\hat{h}}_j'$.
	
	We now set $\psi' = \hat{\hat{\psi}}$ and $\mu_i' = \hat{\hat{\mu}}_i$ for each $0 \le i \le m$.
	For all $m < i \le n$ we set $\mu_i'$ to be the endpoint of $\mu_{m}'$.
	Let $\sigma$ be the initial subpath of $\gamma_2' \zeta$ of length $\lceil f_\ell(r) - |\mu_0' \dots \mu_{n}'| \rceil$ which exists by (B1) as $|\gamma_2' \zeta| \ge 32 f_{\ell}(\frac{r}{16}) = 2 f_{\ell}(r)$.
	If the endpoint of $\gamma_2'$ does not lie on $\sigma$, we set $\mu_{n+1}'$ to be equal to $\sigma$ and $\mu_{n+2}'$ to be the endpoint of $\sigma$. Otherwise, we set $\mu_{n+1}'$ to be equal to $\gamma_2'$ and $\mu_{n+2}'$ to be the subpath of $\zeta$ from the endpoint of $\gamma_2'$ to the endpoint of $\sigma$.
	Finally, we set $\theta'$ to be the subpath of $\gamma_1'\gamma_2' \zeta \pi$ from the endpoint of $\mu_{n+2}'$ to the endpoint of $\pi$. 
	
	To conclude the proof, we now check that the conclusions of Proposition \ref{prop:main_pieces} are satisfied with these choices. 
	Conclusion (Y1) immediately follows by our choice of $\mathcal{F}'$. 
	We now check (Y2).
	The path $\psi' \mu_0' \dots \mu_{n+2}'$ is reduced as it is a subpath of $\gamma_1'\gamma_2'\zeta$ which is reduced by (B1). 
	No dual curve intersects both $\mu_0' \dots \mu_{n+2}'$ and $\theta'$ by (B1). 
	The paths $\psi', \mu_0' \dots, \psi_n'$ track the paths $\psi, \mu_0, \dots, \mu_n$ respectively by construction. 
	Consequently, conclusion (Y2) follows.
	
	We now check conclusion (Y3).
	Given $1 \le i \le m$, (Y3) holds for the spokes intersecting $\mu_i'$ by (X3), as this path tracks $\mu_i$.
	Given $m < i \le n$, (Y3) trivially holds for spokes intersecting $\mu_i'$ as this is a length $0$ path.
	Conclusion (Y3) holds for $\mu_{n+1}' = \gamma_2'$ by the second part of Claim \ref{claim:main_prop} and as $\gamma_2'$ tracks $\gamma_2$. 
	Finally, suppose that we have a sequence of spokes structured with respect to $\mu_{n+2}' \subset \zeta$. 
	By (B2), all but possibly $M$ of these spokes are contained in an $\ell$--fence which contains a spoke in $\mathcal{R}(\gamma_2)$ (where $\mathcal{R}(\gamma_2)$ is as in Claim~\ref{claim:main_prop}).
	Thus, by Lemma \ref{lem:combining_fences} and as $\mathcal{F}'$ is maximal, all but possibly $M$ of these spokes are contained in $\mathcal{F}'$.
	Thus,  (Y3) holds.
	
	Before checking (Y4), we prove the following claim:
	\begin{claim} \label{claim:main_prop2}
		Any dual curve which intersects $\mu_{n+2}'$, either intersects $\mathcal{F}'$ or intersects $z_j$.
	\end{claim}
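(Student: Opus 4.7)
I would begin by applying conclusion (B3) from the earlier invocation of Proposition \ref{prop:main_full} (which was run with $L=\ell-1$). Since $\mu_{n+2}' \subset \zeta$, every dual curve $Q$ that meets $\mu_{n+2}'$ either (a) intersects $\gamma_2$, or (b) intersects an $(\ell-1)$-fence $\mathcal{H}$ in $E'$ containing a spoke of $\mathcal{R}(\gamma_2)$. Case (b) is immediate: by the definition of $\mathcal{R}(\gamma_2)$ recorded in Claim \ref{claim:main_prop}, this shared spoke lies in $\mathcal{F}'$, and viewing $\mathcal{H}$ as an $\ell$-fence via Remark \ref{rmk:smaller_fences}, Lemma \ref{lem:combining_fences} combined with the maximality of $\mathcal{F}'$ forces every spoke of $\mathcal{H}$ into $\mathcal{F}'$, so $Q$ intersects $\mathcal{F}'$.

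The substantive work is case (a). Since $\gamma_2$ is a subpath of $\hat{h}_j'$, which tracks $h_j'$, $Q$ must be dual to an edge of $h_j'$ in $E'$. The subdiagram with boundary $h_j \alpha_j k_j^{-1} z_j^{-1}$ survives the surgeries performed to this point (the surgery producing $\hat{\omega}\hat{h}_j$ only duplicates $h_j$, and the surgeries from Proposition \ref{prop:main_full} act strictly inside $D'$). Inside this subdiagram the component of $Q$ crossing $h_j'$ must exit through the boundary, and because $h_j$ is reduced this exit is through $\alpha_j$, $k_j$, or $z_j$.

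If $Q$ exits through $z_j$ we are done. If $Q$ exits through $k_j$, then $Q$ is dual to edges of both $h_j \subset N(H_j)$ and $k_j \subset N(K_j)$, which forces $Q$ to meet $H_j$ and $K_j$ transversally unless $Q$ coincides with one of them; I rule out $Q=H_j$ and $Q=K_j$ by noting that both $H_j$ and $K_j$ are dual to edges of $\hat{\mu}_m \subset \gamma_1$, hence (since $\gamma_1'$ tracks $\gamma_1$) to edges of $\gamma_1'$, so by reducedness of $\gamma_1'\gamma_2'\zeta$ they cannot also be dual to $\mu_{n+2}' \subset \zeta$. Thus $Q$ meets the spoke $\{H_j,K_j\} \in \mathcal{F}'$. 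If $Q$ exits through $\alpha_j$, I apply Proposition \ref{prop:intersects_a_spoke} with the boundary decomposition $h_j'' \cdot \alpha_j \cdot k_j^{-1} \cdot (z_j^{-1} h_j' a)$ and the $(\ell-1)$-fence $\mathcal{G}$ from Remark \ref{rmk:splitting_points}, which connects $h_j''$ to $k_j$; since $Q$ is dual to both $\alpha_j$ and a subpath of $z_j^{-1} h_j' a$, it meets a spoke of $\mathcal{G}$, and the argument already carried out in Claim \ref{claim:main_prop} (combining $\mathcal{G}$ with $\{H_j,K_j\}$ into an $\ell$-fence and using maximality of $\mathcal{F}'$) shows every such spoke lies in $\mathcal{F}'$.

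The main obstacle is not any single step but the surgical bookkeeping: one has to verify that the $(\ell-1)$-fence in (B3), the $(\ell-1)$-fence $\mathcal{G}$, the spoke $\{H_j,K_j\}$, and the paths $h_j, k_j, z_j, \alpha_j$ are all simultaneously present in $E'$ with their containment relations in $\mathcal{F}'$ preserved. Because every surgery applied since the definition of $\mathcal{F}'$ either duplicates boundary paths of these regions or takes place inside $D'$ (and $\mathcal{G}$, $\{H_j,K_j\}$ live in $D_j$, on the opposite side of $\hat{h}_j$ from $D'$), this bookkeeping goes through, and the case analysis above completes the proof.
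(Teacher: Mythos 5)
Your proof is correct and takes essentially the same route as the paper: both apply conclusion (B3) to split into the two cases, and both handle the fence case by noting the shared spoke lies in $\mathcal{F}'$ and invoking Lemma \ref{lem:combining_fences} together with maximality of $\mathcal{F}'$. The only difference is that in the case where $Q$ intersects $\gamma_2$, the paper simply cites the first part of Claim \ref{claim:main_prop}, whereas you re-derive that statement (correctly, and by essentially the same exit-through-$\alpha_j$/$k_j$/$z_j$ argument the paper uses to prove it).
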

	\begin{proof}
		Let $Q$ be a dual curve intersecting $\mu_{n+2}'$. 
		As $\mu_{n+2}' \subset \zeta$, it follows from (B3) that $Q$ either intersects $\gamma_2$ or intersects an $(\ell-1)$--fence that contains a spoke of $\mathcal R (\gamma_2)$.
		In the first case, we are done by the first part of Claim \ref{claim:main_prop}. 
		Otherwise, there exists an $(\ell-1)$--fence $\mathcal{G}$ that contains a spoke in $\mathcal R (\gamma_2)$ and which $Q$ intersects.
		As $\mathcal G$ contains a spoke of $\mathcal R (\gamma_2)$, by Lemma \ref{lem:combining_fences} every spoke of $\mathcal G$ is contained in $\mathcal F'$. 
		Thus, $Q$ intersects $\mathcal F'$ as claimed.
	\end{proof}

	We are now ready to check (Y4).
	Let $A_1, B_1, \dots, A_p, B_p$ be a sequence of dual curves structured with respect to $\mu_1' \dots \mu_{n+2}'$.
	By the first part of Claim \ref{claim:main_prop}, Claim \ref{claim:main_prop2} and as $\mu_i'$ tracks $\mu_i$ for $1 \le i \le n$, it follows that each $A_i$ (resp. $B_i$) either intersects $\mathcal{F}'$, intersects $\mu_1 \dots \mu_n$ or intersects $z_j$.
	If $m > 0$, as $z_j \subset \mu_m$, it follows that these dual curves either intersect $\mathcal{F}'$ or intersect $\mu_1 \dots \mu_n$ and the claim follows from (X4).
	On the other hand, if $m = 0$, then $\mu_{1}' \dots \mu_{n+2}' = \mu_{n+1}' \mu_{n+2}'$, and it follows from the first part of Claim \ref{claim:main_prop} and Claim \ref{claim:main_prop2} that a dual curve intersecting $\mu_{n+1}' \mu_{n+2}'$ either intersects $\mathcal{F}'$ or intersects $z_j \subset \mu_0$.
	As $|z_j| \le V(\Gamma) < M$, the claim also follows in this case.
	
	Conclusion (Y5) holds by the first part of Claim \ref{claim:main_prop} and Claim \ref{claim:main_prop2}, as $z_j \subset \mu_0 \dots \mu_n$, $\mu_{n+1}' \subset \gamma_2'$ and $\gamma_2'$ tracks $\gamma_2$. We now check that (Y6b) holds. The first part of this claim follows as $|\mu_0' \dots \mu_{n+2}'| = \lceil f_{\ell}(r) \rceil$ by our choices. The subpath of $\alpha$ between the endpoint of $\gamma_3$ and the endpoint of $\pi$ (which is the same as the endpoint of $\theta'$) has length $C_{\ell-1} (\frac{r}{16})^{\ell}$ by conclusion (B4) of Proposition \ref{prop:main_full}. 
	Thus, (Y6b) follows. This completes the proof for this final case.
\end{proof}

Before proving the final step, we show how Proposition \ref{prop:main_pieces} can naturally be iterated.

\begin{figure}[h]
	\centering
	\begin{overpic}[scale=.5]{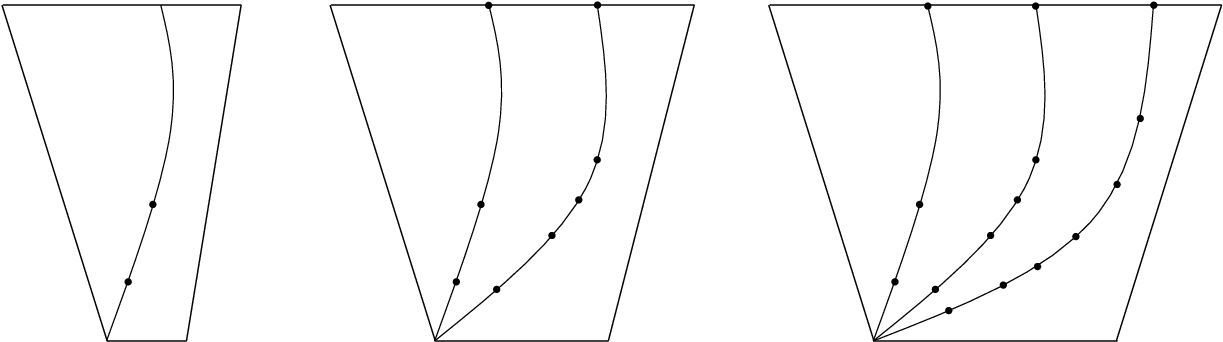}
		\put(8,3){\Tiny $\psi$}
		\put(9,8.5){\Tiny $\mu_0$}
		\put(12.5,20){\Tiny $\theta$}
		
		\put(23,11){$\to$}
		
		\put(35,3){\Tiny $\psi$}
		\put(35.5,8){\Tiny $\mu_0$}
		\put(39.5,20){\Tiny $\theta$}
		
		\put(37.5,3){\Tiny $\psi'$}
		\put(40.5,7){\Tiny $\mu_0'$}
		\put(43.5,10.5){\Tiny $\mu_1'$}
		\put(45,14){\Tiny $\mu_2'$}
		\put(47,20){\Tiny $\theta'$}
		\put(42,28){\Tiny $\approx R^L$}
		
		\put(58,11){$\to$}		
		
		\put(71,3){\Tiny $\psi$}
		\put(71.5,8){\Tiny $\mu_0$}
		\put(75.5,20){\Tiny $\theta$}

		\put(73.5,3){\Tiny $\psi'$}
		\put(76.5,7){\Tiny $\mu_0'$}
		\put(79.5,10.5){\Tiny $\mu_1'$}
		\put(81,14){\Tiny $\mu_2'$}
		\put(83,20){\Tiny $\theta'$}

		\put(75,.5){\Tiny $\psi''$}
		\put(79,2){\Tiny $\mu_0''$}
		\put(83,3.5){\Tiny $\mu_1''$}
		\put(86,6){\Tiny $\mu_2''$}
		\put(90,10){\Tiny $\mu_3''$}
		\put(92.5,15){\Tiny $\mu_4''$}
		\put(94,20){\Tiny $\theta''$}
		\put(78,28){\Tiny $\approx R^L$}
		\put(87,28){\Tiny $\approx R^L$}
	\end{overpic}
\caption{Illustration of Lemma~\ref{lem:iterate}. Each arrow symbolizes an application of Proposition~\ref{prop:main_pieces}.}
\end{figure}

\begin{lemma} \label{lem:iterate}
		Suppose we have integers $L \ge 1$, $M \ge 2$ and $R \ge 0$. Furthermore, suppose we have a disk diagram containing a path $\psi' \mu_0' \dots \mu_{m+2}' \theta'$, with $m +2 \le \frac{f_{L}(R)}{50M^3} - 1$, and an $L$--fence $\mathcal{F}'$ 
		which were obtained by applying Proposition \ref{prop:main_pieces} (where $n = m$ in that proposition) and such that conclusion (Y6b) of that proposition holds. Then, by setting $\mathcal F = \mathcal F'$, $\psi = \psi'$, $\theta = \theta'$ and for $1 \le i \le m+2$ setting $\mu_i = \mu_i'$, it follows that the hypotheses of Proposition \ref{prop:main_pieces} hold for the path $\psi \mu_0 \dots \mu_{m+2} \theta$ (by setting $n = m +2$ in that proposition).
\end{lemma}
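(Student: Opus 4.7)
The plan is to verify each of the hypotheses (X1)--(X5) of Proposition \ref{prop:main_pieces}, together with the existence of an $L$--fence in $D_1$ containing an $L$--fence separated spoke, for the new input data, where the ambient disk diagram is the $E$ produced by the previous application of Proposition \ref{prop:main_pieces}. The new boundary path $\gamma \alpha \eta \beta$ is inherited from the old one (surgery preserves the boundary), the new basepoint is $b$, and the new subdiagram $D_1$ is the $E'$ from conclusion (Y1). The numerical bound on the new $n = m+2$ comes from the lemma's hypothesis $m+2 \le f_L(R)/(50M^3) - 1$.

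I would first dispatch the routine hypotheses. (X1) is immediate since $\alpha$ is unchanged under surgery. (X2) follows from (Y2) for reducedness and the no-dual-curve condition with $\theta'$, from the tracking $\psi' \to \psi$ (yielding $|\psi'| \le |\psi| \le f_L(R)$), and from (Y6b) for the lower bound $|\mu_0' \cdots \mu_{m+2}'| \ge f_L(R)$. Hypothesis (X3) is precisely (Y3), and hypothesis (X4) is precisely (Y4). The existence of the new $L$--fence $\mathcal F = \mathcal F'$ in $E'$ containing an $L$--fence separated spoke comes from (Y1) combined with the surgery-stability of $L$--fence separation (which is built into the definition): (Y1) ensures $\mathcal F'$ contains either every spoke of the old $\mathcal F$ or a spoke of the old $\mathcal R(\mu_0)$, and in either case the corresponding spoke was $L$--fence separated from $\eta$ with respect to the old $D$, hence remains so with respect to $E$.

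The main step, and the one requiring the most care, is verifying (X5). The key observation is that in Case 2 of the proof of Proposition \ref{prop:main_pieces} (the case that produces conclusion (Y6b)), the path $\mu_0' = \hat{\hat{\mu}}_0$ tracks the old $\mu_0$. I would define the new $M$--adequate set of spokes intersecting $\mu_0'$ as those spokes in $E$ whose two dual curves correspond, under the tracking identification and the natural collapse map $E \to D$, to a spoke of the old $\mathcal R(\mu_0)$. Each such spoke is $L$--fence separated from $\eta$ with respect to $E$ because $L$--fence separation with respect to the old $D$ is defined so as to survive any further series of surgeries. For $M$--adequacy, a structured sequence of spokes with respect to $\mu_0'$ pulls back via tracking to a structured sequence of spokes in old $D$ with respect to $\mu_0$, so all but possibly $M$ of them lie in the old $\mathcal R(\mu_0)$, which is the required adequacy. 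The main (and only) conceptual obstacle is the bookkeeping in this tracking/collapse argument, ensuring that the correspondence on spokes is well-defined and that the $M$--adequacy count carries over intact.
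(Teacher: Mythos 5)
Your proposal is correct and follows essentially the same route as the paper: (X1) from invariance of $\alpha$, (X2) from (Y2) together with tracking and (Y6b), (X3)--(X4) directly from (Y3)--(Y4), and the fence-separated spoke from (Y1) plus the surgery-stability built into the definition. Your verification of (X5) is simply a more detailed unpacking of the paper's one-line appeal to the fact that $\mu_0'$ tracks a path satisfying (X5), and the $M$--adequacy transfer via pulled-back structured sequences is exactly the intended mechanism.
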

\begin{proof}
	First note that $\mathcal{F}'$ contains a spoke which is $L$--fence separated from $\eta$ by (Y1),
	 thus so does $\mathcal F$. Hypothesis (X1) follows, as it already was true before Proposition \ref{prop:main_pieces} was applied to obtain the path $\psi' \mu_0' \dots \mu_{m+2}' \theta'$.
	We now check (X2). From (Y2) we see that $\psi \mu_0 \dots \mu_{m+2}$ is reduced and  no dual curve intersects both $\psi \mu_0 \dots \mu_{m+2}$ and $\theta$. That $|\psi| \le f_{L}(R)$ follows by (Y2) as $\psi$ tracks a path of length at most $f_L(R)$. Finally, $|\mu_0 \dots \mu_{m+2}| \ge f_L(R)$ by (Y6b). Thus, (X2) holds.
	The claims (X3) and (X4) immediately follow from (Y3) and (Y4) respectively. Finally, (X5) follows as, by (Y2), $\mu_0$ tracks a path satisfying (X5).
\end{proof}

\begin{proof}[Proof of Proposition \ref{prop:main_full} for $L \ge 1$ assuming Proposition \ref{prop:main_pieces} for $L$.]
	Let $D$ be a disk diagram with boundary path $\gamma_1 \gamma_2 \gamma_3 \alpha \eta \beta$ satisfying the hypotheses of Proposition \ref{prop:main_full} for some $L \ge 1$, and suppose that Proposition \ref{prop:main_pieces} holds for $L$.
	We show that the conclusions of Proposition \ref{prop:main_full} hold for $D$.
	Our strategy will be to use Lemma \ref{lem:iterate} to iteratively apply Proposition \ref{prop:main_pieces}. 
	At the $i$'th iteration, we show that we have a path satisfying conclusions (B1)--(B3) of Proposition \ref{prop:main_full}. Furthermore, we additionally show that either conclusion (B4) holds (and we are done) or, alternatively, the subpath of $\alpha$, as in conclusion (B4), has length at least $C_L' \frac{n}{2} R^L$ where $C_L'$ is the constant from Proposition \ref{prop:main_pieces} and $n = 2i$.
	This is enough to prove the proposition, as when $n \ge \frac{f_L(R)}{50M^4}$, we get that $|\alpha| \ge C_L' \frac{f_L(R)}{100M^4} R^L \ge C R^{L+1}$, where $C$ depends only on $M$ and~$L$, and so (B4) follows.
	
	First we show there exists a path $\psi \mu_0 \theta$ in $D$ satisfying the hypotheses of Proposition \ref{prop:main_pieces} with $n=0$. This is seen by setting $\psi = \gamma_1$, $\mu_0 = \gamma_2$ and $\theta = \gamma_3$. 
	We also take $\mathcal{F}$ to be any spoke intersecting $\gamma_2$ that is contained in $\mathcal{R}(\gamma_2)$, which exists for $R$ large enough by Lemma \ref{lem:nice_sequence}.
	The hypotheses (X1), (X2) and (X5) of Proposition \ref{prop:main_pieces} follow respectively from hypotheses (A1), (A2) and (A3) of Proposition \ref{prop:main_full} (where we set $\mathcal R (\mu_0) = \mathcal R (\gamma_2)$).
	Furthermore, hypotheses	(X3) and (X4) follow trivially.
	
	Next, suppose that we have a path $\psi \mu_0 \dots \mu_n \theta$ in $D$ and an $L$--fence $\mathcal{F}$ satisfying the hypotheses of Proposition \ref{prop:main_pieces} (where $\psi$, $\mu_0$, $\theta$ and $\mathcal{F}$ are possibly different than as in the previous paragraph). Additionally, suppose that $\mathcal{F}$ contains a spoke of $\mathcal{R}(\gamma_2)$, that $\mu_0$ tracks $\gamma_2$ and that $\psi$ tracks $\gamma_1$.
	Furthermore, suppose that every dual curve which intersects $\mu_0 \dots \mu_n$ either intersects $\gamma_2$ or intersects an $L$--fence which contains a spoke of $\mathcal R (\gamma_2)$.
	Finally, we suppose that the subpath of $\alpha$ from the endpoint of $\gamma_3$ to the endpoint of $\theta$ has length $C_L' \frac{n}{2} R^L$ where $C'$ depends only on $M$ and $L$. 
	
	We apply Proposition \ref{prop:main_pieces} to obtain a new path $\psi' \mu_0' \dots \mu_{n+2}' \theta'$ satisfying the conclusions of that proposition. 
	When applying this proposition, we set $\mathcal R (\mu_0)$ 
	to be all edge-pairs $\{e,f\}$ in $\mu_0$ such that there exists a spoke dual to $\{e,f\}$ and in 
	$\mathcal R (\gamma_2)$. 
	Note that the set $\mathcal R (\mu_0)$ is $M$--adequate as $\mathcal R (\gamma_2)$ is $M$--adequate and $\mu_0$ tracks $\gamma_2$. 
	Let $\mathcal F'$ be as in (Y1), and note that $\mathcal F'$ contains a spoke in $\mathcal R (\gamma_2)$  by our assumption on $\mathcal F$.
	
	We now show that conclusions (B1)--(B3) of Proposition \ref{prop:main_full} hold by setting $\gamma_1' = \psi'$, $\gamma_2' = \mu_0'$, $\zeta = \mu_1' \dots \mu_{n+2}'$ and $\pi = \theta'$.
	We first check that (B1) holds. 
	By (Y2) we get that $\gamma_1'\gamma_2'\zeta$ is reduced, no dual curve intersects both $\gamma_1'\gamma_2'\zeta$ and $\pi$, 
	$\gamma_1'$ tracks $\psi$ (and consequently tracks $\gamma_1$) and $\gamma_2'$ tracks $\mu_0$ (and consequently tracks $\gamma_2$).
	If (Y6a) holds then we immediately get that $|\gamma_2' \zeta| \ge 32 f_{L+1}(R)$.
	On the other hand, if (Y6b) holds, then $|\gamma_2' \zeta| = |\mu_0' \dots \mu_{n+2}'| \ge f_L(R) \ge 32 f_{L+1}(R)$.
	Conclusion (B1) now follows.
	
	Conclusion (B2) follows from (Y4). 	
	Finally, to check (B3), let $Q$ be a dual curve that intersects $\zeta = \mu_1' \dots \mu_{n+2}'$.
	If $Q$ intersects $\mu_1' \dots \mu_n'$, then it intersects $\mu_1 \dots \mu_n$ by (Y2).
	On the other hand, if $Q$ intersects $\mu_{n+1}'\mu_{n+2}'$ then by (Y5) it either intersects $\mathcal F'$ or $\mu_0 \dots \mu_n$. 
	Thus, $Q$ either intersects $\mathcal F'$ or intersects $\mu_0 \dots \mu_n$.
	In the first case, (B3) immediately follows as $\mathcal F'$ contains a spoke of $\mathcal R (\gamma_2)$. In the latter case, (B3) follows as, by our assumption, every dual curve which intersects $\mu_0 \dots \mu_n$ either intersects $\mathcal F$ or intersects $\gamma_2$.
	
	We have thus established that (B1)--(B3) hold. We now check that our additional assumptions on $\mu_0 \dots \mu_n$ correspondingly hold for the path $\mu_0' \dots \mu_{n+2}'$.
	We have already established that $\mathcal F'$ contains a spoke of $\mathcal R (\gamma_2)$, $\psi'$ tracks $\gamma_1$ and $\mu_0'$ tracks $\gamma_2$. Additionally, by arguing as in the previous paragraph, every dual curve which intersects $\mu_0' \dots \mu_{n+2}'$ intersects either $\gamma_2$ or an $L$--fence which contains a spoke of $\mathcal R (\gamma_2)$. 

	Now, if (Y6a) holds, then (B4) follows and we are done.
	On the other hand, if (Y6b) holds, it then follows that the subpath of $\alpha$ between the endpoint of $\theta$ and the endpoint of $\theta'$ has length at least $C_L' R^L$. Thus, the subpath of $\alpha$ between the endpoint of $\gamma_3$ and the endpoint of $\theta'$ has length at least $C_L'\frac{n}{2} R^L + C_L' R^L = C_L'\frac{n+2}{2} R^L$.
	Thus, the required hypotheses hold. Additionally, if $n+2 \le \frac{f_{L}(R)}{50M^4} - 1$, then by Lemma \ref{lem:iterate} we may iterate and apply Proposition \ref{prop:main_pieces} again.
	This completes the proof.
\end{proof}

\section{Main theorems} \label{sec:main_thms}
\emph{In this section we use Theorem~\ref{thm:main} to prove the results from the introduction.}

\vspace{.4cm}
Before proving Theorem \ref{intro_thm:div}, we first define $\Gamma$--complete words.
These words were first defined in \cite{Dani-Thomas} and are also utilized in \cite{Levcovitz-div}.
The periodic geodesic we construct to prove Theorem~\ref{intro_thm:div} will have its label be the concatenation of $\Gamma$--complete words.
Recall that given a graph $\Gamma$, the \emph{complement graph $\Gamma^c$} is the graph with the same vertex set as $\Gamma$ and where distinct vertices of $\Gamma^c$ are adjacent if and only if they are not adjacent in $\Gamma$.

\begin{definition}[$\Gamma$-complete word] \label{def:complete_word}
	Let $\Gamma$ be a graph which is not a join.
	As $\Gamma$ is not a join, the
	complement graph $\Gamma^c$ is connected and consequently there is 
	a sequence of vertices $s_0, \dots, s_n$ of $\Gamma$ such that 
	\begin{enumerate}
		\item For every vertex $s \in \Gamma$, $s_i = s$ for some $0 \le i \le n$.
		\item The vertices $s_i$ and $s_{i+1}$ are distinct, non-adjacent vertices of
		$\Gamma$ for all $0 \le i \le n$ (taken mod $n+1$).
	\end{enumerate}
	Note that it could be
	that $s_i = s_j$ for some $i \neq j$.
	We say that $w = s_0 \dots s_n$ is
	a \textit{$\Gamma$-complete word}.
		We remark that by Tits' solution to the word problem (see \cite{Davis} for instance), $w^n$ is a reduced word for all integers $n$.
\end{definition}

We deduce the next lemma from Propositions \ref{prop:hyp_index_of_fences} and \ref{prop:intersects_a_spoke}.
\begin{lemma} \label{lem:complete_word}
	Let $\Gamma$ be a graph that is not a join and with hypergraph index $L \not\in \{0, \infty\}$.
	Let $D$ be a disk diagram with simple boundary path $\gamma \alpha \gamma' \beta$.
	Suppose that the label of $\beta$ is a $\Gamma$-complete word and that every dual curve dual to $\beta$ is also dual to
	$\alpha$.
	Then $\gamma$ and $\gamma'$ are not connected by an $(L-1)$--fence.
\end{lemma}
\begin{proof}
	Suppose for a contradiction that there is an $(L-1)$--fence $\mathcal{F}$ connecting
	$\gamma$ and $\gamma'$.
	Let $w = s_0 \dots s_n$ be the $\Gamma$--complete word which is the label of $\beta$.
	By Proposition \ref{prop:intersects_a_spoke}, every dual curve dual to $\beta$
	intersects a spoke of $\mathcal{F}$. 
	By Proposition \ref{prop:hyp_index_of_fences}, the set of vertices $V(\mathcal{F}) \cup \{s_0,
	,\dots, s_n \}$ either induces a strip subgraph of $\Gamma$ or induces a subgraph of hypergraph index at most $L-1$. 
	However, as $w$ is a $\Gamma$--complete word, every $s \in V(\Gamma)$ is equal to $s_i$
	for some $i$. 
	This implies that $V(\Gamma) = V(\mathcal{F}) \cup \{s_0,
	,\dots, s_n \}$. Thus, $\Gamma$ is either a strip subgraph (and has hypergraph index $\infty$) or has hypergraph index at most $L-1$. 
	In either case we get a contradiction.
\end{proof}

We can now prove the theorems from the introduction.

\begin{theorem}[Theorem C]
	Let $\Gamma$ be a simplicial graph with hypergraph index $k \neq \infty$. Then the Cayley graph of the RACG $W_\Gamma$ contains a periodic geodesic with geodesic divergence a polynomial of degree~$k+1$.
\end{theorem}
\begin{proof}
	First note that we may assume that $\Gamma$ is not a clique, as otherwise $W_{\Gamma}$ is a finite group and has hypergraph index $\infty$. Suppose first that $\Gamma = \Gamma_1 \star \Gamma_2$ is a join.
	If $\Gamma_2$ is a clique, then $W_{\Gamma} = W_{\Gamma_1} \times W_{\Gamma_2}$ where $W_{\Gamma_2}$ is finite. 
	In this case, it can readily be deduced that $\Gamma_1$ and $\Gamma$ have the same hypergraph index
	 and that the geodesic divergence of a bi-infinite geodesic in the Cayley graph of $W_{\Gamma_1}$ is equivalent, under the $\asymp$ equivalence of functions, to the geodesic divergence of this geodesic when considered as a geodesic in the Cayley graph of $W_\Gamma$.
	Thus, we may assume that $\Gamma_1$ and $\Gamma_2$ each contain a pair of non-adjacent vertices. 
	In particular, $\Gamma$ has hypergraph index $0$. 
	Furthermore, $W_\Gamma$ is strongly thick of order $0$ and has linear divergence \cite{Behrstock-Falgas-Ravry-Hagen-Susse}[Proposition 2.11]. 
	In particular, any periodic geodesic in $W_\Gamma$ has geodesic divergence a linear function, and the claim follows in this case.
	
	By the previous paragraph, we may assume that $\Gamma$ is not a join graph, and we form a $\Gamma$--complete word  $w=s_0 \dots s_n$. 
	We further assume that $n$ is minimal out of the possible choices for $w$. In particular, $|w|$ only depends on $\Gamma$.
	Let $\Sigma_{\Gamma}$ be the Davis complex of the RACG $W_\Gamma$.
	Let $\sigma$ be the 
	bi-infinite geodesic based at the identity vertex $b \in \Sigma_{\Gamma}$ which has one of its infinite rays emanating from $b$ with label 
	 $www\dots$ and the other ray emanating from $b$ with label $w^{-1}w^{-1}w^{-1} \dots$.
	For $i \in \mathbb{Z}$, let $p_i$ be the vertex of $\sigma$ which is the endpoint of the subpath of $\sigma$ with starting point $b$   and label~$w^i$.
	
	Fix an integer $r > 0$. Let $B$ be the open ball of radius $|w|r$ based at $b \in \Sigma_\Gamma$, and let $\nu$ be a simple path in $\Sigma_\Gamma \setminus B$ from $p_r$ to $p_{-r}$.
	The path $\nu$ exists as $W_\Gamma$ is one-ended (since it has integer hypergraph index and consequently is not relatively hyperbolic).
	To prove the theorem, it is enough to show that, for $r$ large enough, the length of $\nu$ is bounded below by a function $Cr^{k+1}$ for some constant~$C$.
	
	\begin{figure}[htbp]
		\begin{overpic}[scale=.35]{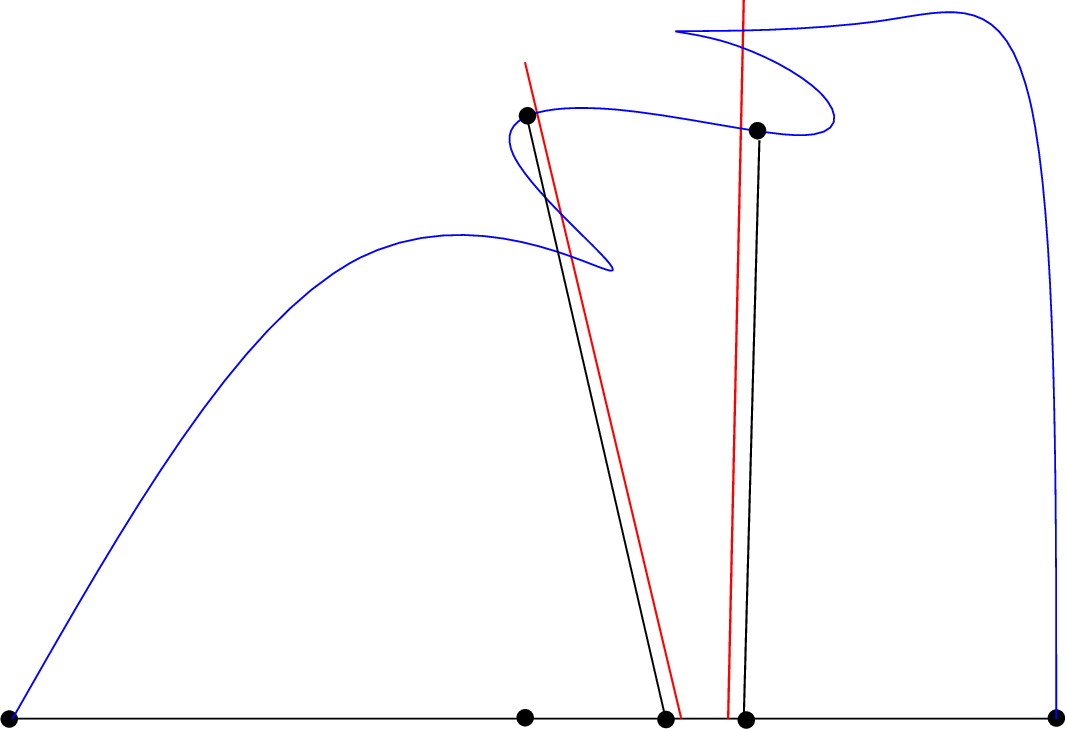}
			\put(48,-3){\Tiny$b$}
			\put(60,-2){\Tiny$p_i$}
			\put(69,-2){\Tiny$p_{i+1}$}
			\put(57,30){\Tiny$H_i$}
			\put(64,30){\Tiny$K_i$}
			\put(54,20){\Tiny$\gamma_i$}
			\put(71,20){\Tiny$\eta_i$}
			\put(58,59){\Tiny$\alpha_i$}
			\put(20,34){\Tiny$\nu$}
			\put(64.5,-3){\Tiny$\beta_i$}
		\end{overpic}
		\caption{Paths chosen in the proof of Theorem~C.}
		\label{fig:thm_c}
	\end{figure}
	
	For each $0 \le i < r$, let $H_i$ (resp. $K_i$) be the hyperplane dual to the edge of $\sigma$ which is adjacent to $p_i$ (resp. $p_{i+1}$) and is labeled by $s_0$ (resp. $s_n$).
	As hyperplanes separate $\Sigma_\Gamma$ and do not intersect geodesics twice, it follows that $H_i$ (resp. $K_i$) intersects $\nu$.
	Let $e_i$ (resp. $f_i$) be the last (resp. first) edge of $\nu$ dual to $H_i$ (resp. $K_i$).
	Let $\gamma_i$ (resp. $\eta_i$) be a minimal length geodesic in the carrier $N(H_i)$ (resp. $N(K_i)$) with starting point $p_i$ (resp. $p_{i+1}$) and endpoint on $e_i$ (resp. $f_i$). 
	Let $\alpha_i$ be the subpath of $\nu$ between $\gamma_i \cap \nu$ and $\eta_{i} \cap \nu$.
	As $w$ is a $\Gamma$--complete word, no pair of hyperplanes dual to $\sigma$ intersect.
	By our choices, $\alpha_i \cap \alpha_j$ is either empty or a single vertex for all $i \neq j$.
	Let $D_i$ be the disk diagram with boundary path $\gamma_i \alpha_i \eta_{i}^{-1} \beta_i$ where $\beta_i$ has label~$w^{-1}$.

	For each $0 \le i \le \frac{r}{2}$, we observe the following:
	\begin{enumerate}
		\item The path $\gamma_i$ is reduced.
		\item By Lemma \ref{lem:complete_word}, no $(k-1)$--fence connects $\gamma_i$ to $\eta_{i}^{-1}$ in \textit{any} disk diagram with boundary path $\gamma_i \alpha_i \eta_{i}^{-1} \beta_i$. 
		\item The path $\alpha_i$ does not intersect the ball $B_{p_i}(|w|(r - i)) = B_{p_i}(R)$, since $\nu$ does not intersect the ball $B_{p_0}(|w|r)$.
	\end{enumerate}
	Thus, for each $0 \le i \le \frac{r}{2}$, we can apply Theorem~\ref{prop:main_full} to $D_i$ by setting, in that theorem, $\gamma = \gamma_i$, $\alpha = \alpha_i$, $\eta = \eta_{i}^{-1}$, $\beta = \beta_i$, $L=k-1$ and $R = |w|(r - i)$. Hypotheses (1)--(3) in that theorem follow respectively from (1)--(3) above. We conclude that for $r$ large enough $|\alpha_i| \ge C' (|w|(r - i))^{k}$ where $C'$ depends only on $\Gamma$ and $k-1$. 
	
	As the $\{\alpha_i\}$ do not have edges in common, for $r$ large enough we get:
	\[|\nu| \ge \sum_{i=1}^{ \big \lfloor \frac{r}{2} \big \rfloor} |\alpha_i| \ge \Big(\frac{r}{2}-1 \Big) C' \Big(|w| \big(r - \frac{r}{2} \big) \Big)^{k} \ge Cr^{k+1}\]
	where $C$ depends only on $\Gamma$ and~$k$.
\end{proof}

\begin{theorem}[Theorem A]
	Let $W_\Gamma$ be a RACG and $k \ge 0$ an integer. Then the following are equivalent:
	\begin{enumerate}
		\item The hypergraph index of $\Gamma$ is $k$.
		\item The divergence of $W_\Gamma$ is $r^{k+1}$, and the Cayley graph of $W_\Gamma$ contains a periodic geodesic with geodesic divergence $r^{k+1}$.
		\item  The group $W_\Gamma$ is strongly thick of order $k$.
	\end{enumerate}
\end{theorem}  
\begin{proof}
	Suppose first that the hypergraph index of $\Gamma$ is $\infty$. 
	It then follows that $W_\Gamma$ is relatively hyperbolic (see \cite{Levcovitz-thick}). Thus, $W_\Gamma$ has either exponential divergence or infinite divergence \cite{Sisto}[Theorem~1.3], and $W_\Gamma$ is not strongly thick \cite{Behrstock-Drutu-Mosher}.
	
	Thus, we may suppose that $W_\Gamma$ has hypergraph index a non-negative integer $k$.
	Consequently, $W_\Gamma$ is strongly thick of order at most $k$ by Theorem \ref{thm:hi_bounds_thick} and 
	has divergence function bounded above by the function $r^{k+1}$ by Theorem \ref{thm_thick_bounds_div}.
	By Theorem \ref{intro_thm:div}, $W_\Gamma$ contains a periodic geodesic with geodesic divergence the function $r^{k+1}$ and, consequently, $W_\Gamma$ has divergence function bound below by $r^{k+1}$. Thus, $W_\Gamma$ has divergence exactly $r^{k+1}$. 
	Applying  Theorem \ref{thm_thick_bounds_div} once more, we see that $W_\Gamma$ is strongly thick of order exactly $k$.
\end{proof}

\bibliographystyle{amsalpha}
\bibliography{bibliography}
\end{document}